\documentclass[11pt]{article}
\usepackage[T1]{fontenc}
\usepackage[utf8]{inputenc}
\usepackage{lmodern,amsmath,amssymb,amsthm,dsfont}
\usepackage[margin=1in]{geometry}
\usepackage{microtype,hyperref,enumitem}
\usepackage[capitalise]{cleveref} 
\usepackage{algorithm}
\usepackage{algpseudocode}
\usepackage{caption}
\hypersetup{colorlinks=true,linkcolor=blue,citecolor=blue,urlcolor=blue,
pdftitle={Proof of the Linear Hadwiger's conjecture}}
\newtheorem{theorem}{Theorem}
\newtheorem{lemma}[theorem]{Lemma}
\newtheorem{proposition}[theorem]{Proposition}
\newtheorem{corollary}[theorem]{Corollary}
\newtheorem{conjecture}[theorem]{Conjecture}
\newtheorem{observation}[theorem]{Observation}
\newtheorem{claim}{Claim}[theorem]

\newcommand{\had}{h}
\renewcommand{\v}{\textup{\textsf{v}}}
\newcommand{\e}{\textup{\textsf{e}}}
\newcommand{\mc}[1]{\mathcal{#1}} 
\newcommand{\bb}[1]{\mathbb{#1}} 
\newcommand{\s}[1]{\left(#1\right)}
\newcommand{\eps}{\varepsilon}
\newcommand{\deff}{\operatorname{def}}

\newtheorem*{bootstraptheorem}{Theorem~\ref{thm:bootstrap}}

\title{A Proof of the Linear Hadwiger Conjecture}
\author{
Sergey Norin\,\footnotemark[1] 
\qquad Raphael Steiner\,\footnotemark[2]
}
\date{}
\begin{document}
\maketitle

\begin{abstract}
We show that there exists $C\in\mathbb{N}$ such that $K_t$-minor free graphs are $Ct$-colorable.

The proof was found by GPT-6 Astra, following the directions by the authors.

\end{abstract}

\footnotetext[1]{Department of Mathematics and Statistics, McGill University, Montr\'eal, Canada (\textsf{\href{mailto:sergey.norin@mcgill.ca}{sergey.norin\allowbreak@mcgill.ca}}).  Research supported by NSERC.}

\footnotetext[2]{Department of Mathematics, ETH Zürich, Switzerland (\textsf{\href{mailto:raphaelmario.steiner@math.ethz.ch}{raphaelmario.steiner@math.ethz.ch}}). Research supported by the SNSF Ambizione Grant No. 216071.}

\section{Introduction}

Hadwiger's famous coloring conjecture from 1943 is amongst the oldest and most fundamental problems in graph theory, claiming a deep relationship between the \emph{chromatic number} $\chi(G)$ of a graph $G$ and its \emph{Hadwiger number} $h(G)$, i.e., the largest integer $h$ such that $G$ has a $K_h$-minor. 

\begin{conjecture}[Hadwiger~\cite{Had}]\label{con:hadwiger}
Every graph $G$ satisfies $\chi(G)\le h(G)$. 
\end{conjecture}

Despite tremendous efforts by leading experts in the area for decades, Hadwiger's conjecture remains far from being resolved. Since it would be impossible to survey the vast landscape of partial results on this conjecture here, we refer to the survey articles~\cite{MR1411244,MR3526944,MR4680416} on the subject, and restrict ourselves here to only a handful of selected partial results. 

Hadwiger's conjecture has been proved for all graphs $G$ such that $h(G)\le 5$ and remains open when $h(G)\ge 6$. The cases $h(G)\in \{1,2\}$ are trivial, the case $h(G)=3$ was proved independently by Hadwiger himself~\cite{Had} and Dirac~\cite{MR45371}. The first substantially hard case of the conjecture is $h(G)=4$, when it becomes equivalent to the Four-Color-Theorem as demonstrated by Wagner~\cite{Wag}. The next case $h(G)=5$ was also reduced to the Four-Color-Theorem in a tour de force by Robertson, Seymour and Thomas~\cite{RST}, who managed to show that a smallest counterexample to this case of the conjecture would, upon the deletion of a single vertex, yield a counterexample to the Four-Color-Theorem. As the Four-Color-Theorem has been proven~\cite{MR543792,MR543793}, both of these special cases of the conjecture are known to hold as well. 

Due to the apparent difficulty of the precise version of the conjecture, researchers have naturally turned to the investigation of asymptotic results, i.e., trying to find a function $f:\mathbb{N}\rightarrow \mathbb{N}$ such that every graph $G$ satisfies $\chi(G)\le f(h(G))$, and trying to make the asymptotic growth of $f$ as slow as possible. For several decades, the best result of this form was that one can take $f(h)=\Theta(h\sqrt{\log h})$, which followed directly via the degeneracy-upper bound on the chromatic number from the seminal results of Kostochka~\cite{Kos} and Thomason~\cite{Tho} on the maximum average degree of graphs excluding a clique minor. Only a few years ago, the first author, Postle and Song~\cite{NPS} broke this degeneracy barrier by proving that one can take $f$ to be of the form $f(h)=h\log^{1/4+o(1)}h$. Further improving this result, Delcourt and Postle~\cite{DP} proved that one can take $f(h)=\Theta(h\log\log h)$, but the even bigger breakthrough here was that they also showed that the problem can be reduced down to graphs of small order (see Theorem~\ref{thm:DP} below). Building on this key conceptual step, recently the bound has been improved further by Liu and Luo~\cite{LL}, showing that one can take $f(h)=\Theta(h\log\log\log h)$. In this paper, we complete this line of research by presenting a proof that $f(h)$ can be taken to be linear in $h$. This is clearly asymptotically best-possible and resolves a well-known folklore conjecture (see e.g.~\cite{RS,KawMoh06}) in the area, known as the \emph{Linear Hadwiger conjecture}.

\begin{theorem}\label{thm:linear}
There is an absolute constant $C$ such that every graph $G$ satisfies $\chi(G)\le C\cdot h(G)$.
\end{theorem}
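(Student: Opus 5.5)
We follow the Delcourt--Postle reduction (Theorem~\ref{thm:DP}, referred to above), which reduces Theorem~\ref{thm:linear} to graphs of small order. Concretely, we take a minimal counterexample $G$ with $h(G)=t$ and $\chi(G)>Ct$. Minimality gives that $G$ is $Ct$-contraction-critical: it has minimum degree at least $Ct$, and every edge lies in many triangles. By Theorem~\ref{thm:DP} it then suffices to exclude a dense $K_t$-minor-free subgraph $H\subseteq G$ of small order, say $|V(H)|\le t\cdot \mathrm{polylog}(t)$, together with minimum degree at least $C't$ inside $H$. The plan is to show directly that any such small dense graph either contains a $K_t$ minor or admits a colouring/structure contradicting criticality.

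The key steps, in order, are as follows.

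\textbf{Step 1.} Pass to a highly connected, small, dense piece $H$. We use the density-increment and connectivity lemmas underlying Kostochka--Thomason and Norin--Postle--Song. The outcome is $H$ with $|V(H)|=n\le t\,\mathrm{polylog}\,t$, average degree $d\ge C't$, and connectivity $\Omega(d)$.

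\textbf{Step 2.} Bootstrap on the density ratio $n/d$. We prove a statement of the form: if $n\le d\cdot L$, then $H$ has a $K_{\Omega(d/\phi(L))}$ minor. We then improve $\phi$ iteratively, from $\log\log$, to $\log\log\log$, and ultimately to a constant. The mechanism is to find many disjoint small connected sets, each dominating almost all of $H$; a random choice of roots with short paths through high-connectivity gives this. Each pair not yet adjacent is then linked using connectivity, in the style of the Liu--Luo iteration. The goal is that each round reduces the effective ratio $L$ to something like $\log L$ without losing more than a constant factor in $d$.

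\textbf{Step 3.} Close the iteration with a final lemma. It must show that once $L$ is bounded by an absolute constant, a $K_{cd}$ minor exists. For this we would use Thomason's analysis of dense graphs, where pseudo-random dense graphs on $n=O(d)$ vertices have $K_{\Omega(d)}$ minors. This forces $t\ge cC't$, which is a contradiction for large $C$.

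The main obstacle is Step 2: making the bootstrap lose only a constant factor overall rather than a factor per round. Naively each round costs a constant, and unboundedly many rounds would reproduce the $\log^{*}$-type losses. To avoid this, I would make the per-round loss a factor $(1-\epsilon_i)$ with $\sum\epsilon_i<\infty$, for example $\epsilon_i=2^{-i}$. This requires exploiting that after the first few rounds $L$ is already tiny relative to $d$, so the linking can use many short paths with exponentially decreasing failure probability. If this summability fails, the fallback is to handle small $L$ directly by a counting argument on $H$: a $K_t$-minor-free graph on $O(Lt)$ vertices has an independent set of size $\Omega(n/t)$. Repeatedly removing such sets would colour $H$ with $O(t)$ colours, contradicting the choice of $H$ as a critical piece.
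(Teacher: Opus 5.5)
\textbf{Gap.} Your reduction turns the colouring problem on small graphs into a density problem, and that density problem is false, so Steps~2 and~3 cannot be carried out. Theorem~\ref{thm:DP} asks you to prove $\chi(G)\le Ct$ for graphs with $h(G)<t$ and $\v(G)\le t\log^5 t$. By passing to a critical subgraph, you instead aim to show that every graph on $t\,\mathrm{polylog}\,t$ vertices with minimum degree and connectivity $\Omega(Ct)$ has a $K_t$ minor. The random graph $G(n,1/2)$ refutes this. Its Hadwiger number is $(1+o(1))n/\sqrt{\log_2 n}$ (Bollob\'as--Catlin--Erd\H{o}s). So for $n\approx t\sqrt{\log_2 t}$ it has order far below $t\log^5 t$, minimum degree and connectivity about $n/2\gg Ct$, and no $K_t$ minor.

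In particular, Step~3 gets Thomason's analysis backwards. Pseudo-random graphs on $O(d)$ vertices are exactly the extremal examples, with Hadwiger number only $\Theta(d/\sqrt{\log d})$, not $\Omega(d)$. No bookkeeping in Step~2, summable $\epsilon_i$ or otherwise, can reach an endpoint that is false. The same example shows what is missing: $\chi(G(n,1/2))\approx n/(2\log_2 n)=O(t/\sqrt{\log t})$. Hence any successful argument must use the chromatic structure of the small piece, not its density.

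Your fallback does not close the gap either. Greedily removing independent sets of size $\Omega(n'/t)$, where $n'$ is the current number of vertices, uses about $2t\ln(n/t)=O(t\log L)$ colours. For $L=\log^5 t$ this is only the Delcourt--Postle bound $O(t\log\log t)$. It is $O(t)$ only when $L=O(1)$, and there $\chi\le \v(H)=O(t)$ is trivial anyway.

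\textbf{What the paper does instead.} The paper never argues via density. It first proves $\mathrm{LH}(\alpha)$ for $\alpha<1/2$ (Theorem~\ref{thm:small}) by comparing $\chi$ with $\chi_f$ and invoking Reed--Seymour:
\begin{itemize}
\item A bounded-degree graph $D$, read off from a codegree-constrained dual LP, makes an optimal fractional colouring roundable via a weighted Pippenger theorem (Lemma~\ref{lem:rounding}).
\item The bound $\chi_f(G\cup D)\le 4t+(d+1)^{4m}$ comes from contracting a weight-balanced matching that avoids short cycles of $M\cup D$ (Lemma~\ref{lem:bounding}).
\end{itemize}
It then bootstraps $\mathrm{LH}(\alpha)\Rightarrow\mathrm{LH}(4\alpha/3)$ (Theorem~\ref{thm:bootstrap}):
\begin{itemize}
\item Connected induced bipartite pieces of order $>(\log t)^{\alpha/3}$ are contracted at a factor-$2$ cost (Lemma~\ref{lem:contraction}).
\item The remainder, which has $b(G)\le k$, is handled by a Gy\'arf\'as path argument together with Reed--Seymour and the chromatic-inseparability reduction (Theorem~\ref{thm:DPinsep}).
\end{itemize}
Because the exponent grows geometrically, ten rounds take $1/3$ past $5$. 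So only a bounded number of constant-factor losses occur, which is how the paper avoids the $\log^*$-type accumulation you identified as your main obstacle.
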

 
The proof of Theorem~\ref{thm:linear} which we present here builds heavily on the aforementioned breakthrough-work of Delcourt and Postle~\cite{DP}, who showed that to prove the Linear Hadwiger conjecture, it suffices to show it for graphs whose order is at most a polylogarithmic factor above their Hadwiger number. More precisely, the following statement follows immediately from the main result of their paper\footnote{In fact, their main result justifies any exponent strictly larger than $4$ on top of the logarithmic factor in the following statement, and the choice of $5$ is out of convenience.}.

\begin{theorem}[Corollary of Theorem~1.6 of~\cite{DP}]\label{thm:DP}
If there exists a constant $C\ge 1$ such that for every sufficiently large $t\in \mathbb{R}_{\ge 1}$, every graph $G$ with $h(G)<t$ and $\v(G)\le t\log^5 t$ satisfies $\chi(G)\le Ct$, then there also exists a constant $C'\ge 1$ such that for every $t\in\mathbb{R}_{\ge 1}$, every graph $G$ with $h(G)<t$ satisfies $\chi(G)\le C't$.
\end{theorem}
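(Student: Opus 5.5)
This statement is a direct consequence of the main reduction of Delcourt and Postle, so the plan is simply to match hypotheses and conclusions. Their Theorem~1.6 says, roughly, the following. Let $f$ be a nondecreasing function, and suppose that for every sufficiently large $t$, every graph $G$ with $h(G)<t$ and $\v(G)\le t\log^{c} t$ (for some fixed $c>4$) satisfies $\chi(G)\le f(t)\cdot t$. Then every graph with $h(G)<t$ has $\chi(G)=O\big(t\,(f(t)+\log\log t)\big)$. In fact, their proof gives a sharper form: the small-graph bound propagates with only a constant-factor loss once the additive $\log\log t$ term coming from the density increment is removed. I would first restate their theorem precisely and then instantiate it with $c=5$ and the constant function $f\equiv C$.

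The key steps, in order, are as follows. \textbf{(i)} Recall the Delcourt--Postle framework. A graph with $h(G)<t$ and no small dense subgraph is handled by their minor-building lemmas: it is either colourable via degeneracy plus a linear-sized colouring of a bounded number of dense pieces, or it yields a $K_t$-minor by linking many small dense subgraphs of order at most $t\log^{c} t$. Each such dense piece $H$ inherits $h(H)<t$ and $\v(H)\le t\log^5 t$, so the hypothesis gives $\chi(H)\le Ct$. \textbf{(ii)} Combine the colourings of the pieces. Their argument uses a bounded number $O(1)$ of disjoint palettes, together with a degenerate ordering on the sparse remainder of degeneracy $O(t)$, which gives $\chi(G)\le C't$ with $C'=O(C)$ for all sufficiently large $t$. \textbf{(iii)} Handle small $t$: for $t$ below the threshold $t_0$, use the Kostochka--Thomason bound $\chi(G)=O(t\sqrt{\log t})\le O(\sqrt{\log t_0})\,t$, and enlarge $C'$ to absorb it. Since $t\in\mathbb{R}_{\ge1}$, apply everything with $\lceil t\rceil$, losing at most a factor $2$.

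The main obstacle is step (ii): verifying that the version of their Theorem~1.6 as stated really gives a bound linear in the small-graph hypothesis, with no residual $\log\log t$ factor. The $\log\log t$ in their final result comes only from their own bound on small graphs. To check this, I would trace how the hypothesis enters their proof: it should enter only through colouring the subgraphs of order at most $t\log^{c}t$, with all other losses being absolute constants. The footnote's remark that any exponent above $4$ works indicates that the order threshold is the only place where the exponent matters, so choosing $5$ is admissible.
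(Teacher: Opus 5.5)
Your overall plan is the right one: instantiate the Delcourt--Postle reduction, then handle small $t$ via Kostochka--Thomason and non-integer $t$ via $\lceil t\rceil$. But the step that carries the proof is missing. The version of their Theorem~1.6 you recall is not what they prove. You then leave its key feature as an ``obstacle'' to be checked by tracing a proof whose structure you describe incorrectly.

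Their Theorem~1.6 (Theorem~\ref{thm:dp2} in the paper) contains no $\log\log t$ term. For every integer $t\ge3$ and every $K_t$-minor-free $G$ it gives $\chi(G)\le C_{\mathrm{DP}}t(1+f_{\mathrm{DP}}(G,t))$. Here $f_{\mathrm{DP}}(G,t)$ is the maximum of $\chi(H)/a$ over integers $a$ with $t/\sqrt{\log t}\le a\le t$ and subgraphs $H\subseteq G$ with $h(H)<a$ and $\v(H)\le C_{\mathrm{DP}}a(\log a)^4$. So nothing needs to be re-traced. Your description of their argument (a bounded number of palettes for dense pieces plus a degenerate ordering of a sparse remainder) also does not match it. Their proof argues by contradiction, using chromatic separability and an induction over the scales $a=(2/3)^i t$ that builds woven models.

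Your step (i) uses the small-graph hypothesis at the wrong scale. You bound a piece $H$ with $h(H)<t$ by $\chi(H)\le Ct$. But $f_{\mathrm{DP}}$ requires $\chi(H)=O(a)$ for pieces with $h(H)<a$, where $a$ can be as small as $t/\sqrt{\log t}$. The bound $Ct$ only gives $\chi(H)/a\le C\sqrt{\log t}$, which is not bounded.

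The correct instantiation applies the hypothesis at scale $a$:
\begin{itemize}
\item Once $t$ is large, every admissible $a\ge t/\sqrt{\log t}$ satisfies $a\ge t_0$ and $\log a\ge C_{\mathrm{DP}}$.
\item Hence $C_{\mathrm{DP}}a(\log a)^4\le a(\log a)^5$. This is exactly why any exponent above $4$ suffices: the surplus power of $\log a$ absorbs $C_{\mathrm{DP}}$.
\item So every admissible $H$ has $h(H)<a$ and $\v(H)\le a(\log a)^5$, and the hypothesis gives $\chi(H)\le Ca$.
\end{itemize}
Therefore $f_{\mathrm{DP}}(G,t)\le C$, and $\chi(G)\le C_{\mathrm{DP}}(1+C)t$ for all large integers $t$. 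Your step (iii) then finishes the proof, using that $h(G)<t$ means $G$ is $K_{\lceil t\rceil}$-minor-free and that $\lceil t\rceil\le 2t$.
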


To simplify the following discussion, we denote by $\mathrm{LH}(\alpha)$ for $\alpha > 0$ the statement that there exists an absolute constant $C_\alpha\ge 1$ such that for every sufficiently large $t\in \mathbb{R}_{\ge 1}$, every graph $G$ with $h(G)<t$ and $\v(G)\le t\log^{\alpha} t$ satisfies $\chi(G)\le C_\alpha t$.

By Theorem~\ref{thm:DP}, to prove Theorem~\ref{thm:linear}, it suffices to prove $\mathrm{LH}(5)$. We will achieve this in two clearly separate steps: First, we show that the Linear Hadwiger conjecture holds for graphs of very small order, namely graphs of order at most $t\log^{1/2-\varepsilon}t$, with a surprisingly small constant.

\begin{theorem}\label{thm:small}
Let $\varepsilon\in (0,1/2)$ be fixed. For every sufficiently large $t\in \mathbb{R}_{\ge 1}$, every graph $G$ with $h(G)<t$ and $\v(G)\le t(\log t)^{1/2-\varepsilon}$ has chromatic number at most $(4+\varepsilon)t$. 
\end{theorem}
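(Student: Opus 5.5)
We argue by induction on $\v(G)$ (or take a minimal counterexample $G$) and split according to the independence structure of $G$. Put $n=\v(G)\le t(\log t)^{1/2-\varepsilon}$ and suppose $\chi(G)>(4+\varepsilon)t$. A minimal counterexample is vertex-critical, so its minimum degree is at least $(4+\varepsilon)t$. The strategy is to repeatedly remove small sets that can be colored cheaply, so that a greedy coloring exhausts the graph, and to use the failure of this procedure to build a $K_t$ minor.

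\textbf{Step 1: colour classes of size $2$.} Consider the complement $\overline{G}$ and take a maximum matching $M$ in it. Each edge of $M$ is a nonadjacent pair in $G$ and can share one colour. The unmatched vertices $U$ form a clique in $G$, so $|U|<t$. This yields $\chi(G)\le |M|+|U|\le n/2+t$, which is useless when $n\gg t$. So instead we use larger independent sets. The key fact is the standard one for $K_t$-minor-free graphs with few vertices: if $\alpha$ is small, then $G$ is very dense, and dense graphs on $n$ vertices whose density exceeds roughly $t\sqrt{\log(n/t)}$-type thresholds contain $K_t$ minors (Kostochka–Thomason-type bounds in the regime where $n$ is close to $t$). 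Concretely, a graph of order $n$ and average degree $d$ has a $K_t$ minor once $d\ge c\,t\sqrt{\log(n/t)}$ roughly; in our range $\log(n/t)\le (1/2-\varepsilon)\log\log t$, so this threshold is $O(t\sqrt{\log\log t})$. That is too weak directly, so we must exploit the structure more finely.

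\textbf{Step 2: greedy extraction of independent sets and building a minor.} Iteratively pick an independent set $I$ of size $s\approx \tfrac{n}{(4+\varepsilon)t}$ times a constant, give it a new colour, delete it, and continue while such sets exist. If the process succeeds we use fewer than $(4+\varepsilon)t$ colours. Otherwise some induced subgraph $H$ with $\v(H)\ge \Omega(t)$ has independence number below $s=O((\log t)^{1/2-\varepsilon})$. In such an $H$ we build a $K_t$ minor out of connected branch sets of size at most $2$ or $3$: a graph with independence number $\alpha$ on $m$ vertices has a connected dominating-type matching structure (Duchet–Meyniel / Plummer–Stiebitz–Toft ideas) giving a minor of order about $m/(2\alpha-1)$, but here we need order $t$ from $m\approx$ few times $t$, so we instead use a random construction: choose random branch sets of size $k\approx\sqrt{\log t}$ scaled appropriately, and show that with small independence number every pair of branch sets is adjacent with high probability. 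Two random $k$-sets fail to be adjacent with probability at most about $\exp(-k^2/\alpha)$-type bounds arising from the density forced by small $\alpha$ (Turán: small $\alpha$ gives many edges, edge density $\ge 1-O(1/\alpha)$... which only bounds nonedges weakly). The exponent $1/2$ enters exactly here: branch sets of size $\sim(\log t)^{1/2}$ pairwise touch w.h.p., and total size $t(\log t)^{1/2-\varepsilon}$ is below what is needed, which is why the precise balancing constant $4+\varepsilon$ comes out.

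\textbf{Main obstacle.} The hardest step is Step 2: turning ``the greedy colouring gets stuck'' into a clean density or connectivity property strong enough that random branch sets of size about $(\log t)^{1/2-\varepsilon}$ are pairwise adjacent, and simultaneously making the branch sets connected (I would first try attaching each random set to a common high-degree vertex or using a small connector set, paying a factor $2$, which together with the factor $2$ from pairing in Step 1 explains the constant $4$). I would first try to prove that in a minimal counterexample every vertex set of size $\ge t$ contains a nonedge-poor subgraph and apply a union bound over $\binom{t}{2}$ pairs, tuning $\varepsilon$ to absorb the losses.
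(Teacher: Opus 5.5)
There is a genuine gap. The only part of your plan that works is the greedy removal of large independent sets, which is also the paper's first step. The step that has to carry the whole proof has no working mechanism.

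As stated, Step 2 is false. An induced subgraph $H$ with $\alpha(H)<s$ and $\v(H)=\Omega(t)$ need not have a $K_t$ minor: take two disjoint copies of $K_{t-1}$. The minimum-degree property of a minimal counterexample does not survive the deletions. What you actually have to prove is that the leftover $H$ can be coloured with the remaining budget, and that is essentially the theorem itself for graphs of independence number about $(\log t)^{1/2}$. There is also a budget issue: with your threshold $s\approx c\,n/t$ the extraction already spends $\Theta(t)$ colours. The paper uses the threshold $(\log t)^{1/2-\varepsilon/2}$, so extraction costs only $n/(\log t)^{1/2-\varepsilon/2}=o(t)$.

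Even granting that $\chi(H)$ is large, random branch sets fail with your own numbers. Here $t$ disjoint branch sets inside at most $n\le t(\log t)^{1/2-\varepsilon}$ vertices have size $k\le(\log t)^{1/2-\varepsilon}$. So your non-adjacency estimate $\exp(-k^2/\alpha)$ is at least $\exp(-(\log t)^{1-2\varepsilon})$, nowhere near the $o(t^{-2})$ needed for a union bound over $\binom{t}{2}$ pairs. A disjoint union of fewer than $s$ cliques shows this estimate is essentially attained when all you know is $\alpha<s$. You also never make the branch sets connected.

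More fundamentally, every tool you invoke is a density argument: Tur\'an-type density from small $\alpha$, Duchet--Meyniel, and random branch sets. As you note yourself, such arguments stall at $t\sqrt{\log\log t}$-type thresholds in this range. The real obstacle is the integrality gap. By Reed--Seymour, any counterexample $H$ has $\chi(H)>(4+\varepsilon)t>2\chi_f(H)$, and your plan never touches the gap between $\chi$ and $\chi_f$.

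The missing idea is that linearity comes from Reed--Seymour's $\chi_f(F)\le 2h(F)$, applied to minors of $G$, together with a rounding step. No $K_t$ minor is ever built by hand.

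\emph{Rounding step (Lemma~\ref{lem:rounding}).} For $\alpha(G)\le m$, the paper finds a bounded-degree graph $D$ with $\chi(G)\le\chi_f(G\cup D)+\gamma n$. It solves the fractional-colouring LP with extra constraints: for every non-adjacent pair, the total weight of independent sets containing both is capped by $\mu$. In the dual, the pairs with large dual variable form a graph $D$ of maximum degree at most $(600(m+2)/\gamma)^{20m+41}$. The resulting low-codegree fractional colouring is padded with dummy vertices into a near-regular $(m+2)$-uniform weighted hypergraph. It is then rounded via a quantitative Pippenger--Frankl--R\"odl theorem.

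\emph{Bounding step (Lemma~\ref{lem:bounding}).} This gives $\chi_f(G\cup D)\le 4t+(d+1)^{4m}$. A greedy, weight-balanced matching $M$ of $G$ is chosen so that no short cycle of $M\cup D$ passes through $M$ (Lemma~\ref{lem:magicalmatching}). Reed--Seymour is applied to $G[V(M)]/M$. One endpoint of each edge of the resulting induced submatching is then kept, via Observation~\ref{obs:bridges}.

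Thus $4=2\cdot 2$ comes from Reed--Seymour followed by the endpoint choice. The exponent $1/2$ comes from requiring $(d+1)^{4m}=\exp(O(m^2\log\log t))=o(t)$ with $m\approx(\log t)^{1/2-\varepsilon/2}$. Neither arises from branch-set sizes or pairing heuristics.
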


In particular, this establishes that $\mathrm{LH}(\alpha)$ holds for every $0<\alpha<\frac{1}{2}$. 

Second, we show that one can bootstrap from this initial range to the full range that needs to be covered to prove $\mathrm{LH}(5)$. Concretely, one can boost the exponent on top of the logarithmic factor in the graph order by a constant, at the expense of increasing the constant factor in the bound for the chromatic number.

\begin{theorem}\label{thm:bootstrap}
For every $\alpha>0$, we have that $\mathrm{LH}(\alpha)$ implies $\mathrm{LH}(\frac{4}{3}\alpha)$.
\end{theorem}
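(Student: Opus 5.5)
We prove Theorem~\ref{thm:bootstrap} by a contradiction/partition argument. Fix $\alpha>0$, let $C_\alpha$ be the constant from $\mathrm{LH}(\alpha)$, and let $G$ be a graph with $h(G)<t$ and $n:=\v(G)\le t\log^{4\alpha/3}t$. We aim to show $\chi(G)\le C t$ for a suitable $C=C(\alpha)$. The plan is to find a parameter $s$, with $t\le s\le t\log^{\alpha/3}t$ roughly, such that $G$ can be covered by a bounded number of pieces. Each piece should either be colorable by $\mathrm{LH}(\alpha)$ applied at scale $s'$ with $\v\le s'\log^\alpha s'$, or be cheap to color greedily, and the total cost should come to $O(t)$.

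\textbf{Step 1: dense or sparse dichotomy.} Let $d:=\log^{\alpha/3}t$. If $G$ has a vertex of degree less than $Ct$, we remove it, color the rest inductively (minimum counterexample), and extend. So we may assume minimum degree at least $Ct$. Since $h(G)<t$, the Kostochka--Thomason bound gives average degree $O(t\sqrt{\log t})$ for every subgraph, so $G$ is relatively sparse at scale $n$. The key structural claim is this: in a minimum-degree-$Ct$, $K_t$-minor-free graph on $n\le t\log^{4\alpha/3} t$ vertices, we can find a small connected set, or a small set $X$ of vertices, whose contraction or removal yields either a large clique minor or a partition into few parts.

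\textbf{Step 2: partitioning into small pieces.} We partition $V(G)$ into parts $V_1,\dots,V_k$, each of size at most $t\log^\alpha t$. We do this by repeatedly growing connected pieces (BFS balls or small-diameter clusters) of size about $t\log^\alpha t$ and deleting them. Each $G[V_i]$ satisfies $h<t$ and $\v\le t\log^\alpha t$, so $\chi(G[V_i])\le C_\alpha t$ by $\mathrm{LH}(\alpha)$. Naively, coloring the parts with disjoint palettes costs $k\,C_\alpha t$ with $k\approx \log^{\alpha/3}t$, which is too much. Instead we contract each part (or a connected core of each part) to a single vertex. If the resulting quotient graph $H$ on $k$ vertices has few edges, we color $H$ with $O(1)$ colors up to a small exceptional set, and give parts in the same color class a common palette. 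Parts in the same class are nonadjacent, so they can share palettes. The palette count then becomes $O(1)\cdot C_\alpha t$.

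\textbf{Step 3 (main obstacle): controlling the quotient.} We must show $H$ is $O(1)$-degenerate, or else extract a $K_t$ minor. Suppose many parts are mutually adjacent. Then we would combine $\Omega(t)$ branch sets out of the parts together with the internal connectivity of each part (which has minimum degree $\ge Ct$ and hence rich expansion) to build a $K_t$ minor. The exponent $4/3$ should be exactly what makes the accounting balance: we need about $t$ branch sets, each of size about $\log^{\alpha/3} t$ times a connector overhead, and the total must fit inside $n\le t\log^{4\alpha/3}t$. I would first try a density-increment argument. Either some $\Theta(s)$-vertex subgraph has density well above average, which forces a $K_t$ minor via Kostochka--Thomason at a better scale, or every such subgraph is sparse, which gives the degeneracy of $H$. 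I expect making this trade-off precise, together with handling the leftover edges between parts (recolored with a fresh $O(t)$ palette via degeneracy of the bipartite interface), to be the hardest and most delicate part.
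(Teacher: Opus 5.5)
Your proposal has a genuine gap. Step~3 is not a delicate loose end; it is the entire content of the theorem, and the dichotomy you propose there does not hold.

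The quotient $H$ has only about $\v(G)/(t\log^{\alpha}t)\approx\log^{\alpha/3}t$ vertices. So even if all parts are pairwise adjacent, this certifies only a $K_{\lceil\log^{\alpha/3}t\rceil}$ minor, nowhere near $K_t$. You give no mechanism for turning $\log^{\alpha/3}t$ parts into $t$ branch sets.

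The density-increment fallback cannot supply one. Your hypotheses give degrees of order $Ct$, whereas Kostochka--Thomason forces $K_t$ only at average degree of order $t\sqrt{\log t}$. This is exactly the degeneracy barrier, and no density argument alone crosses it. A random graph shows the dichotomy fails under the hypotheses you list (minimum degree $\ge Ct$, $h(G)<t$, $\v(G)\le t\log^{4\alpha/3}t$). Take, say, $\alpha<3/4$ and a random graph on $t\log^{4\alpha/3}t$ vertices whose edge probability is tuned so that its Hadwiger number is just below $t$. Its degrees are still $\gg t$, yet every pair of disjoint vertex sets of size $t\log^{\alpha}t$ spans an edge. Hence every partition into such parts has a complete quotient. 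Your palette-sharing bound $\chi(G)\le\chi(H)\cdot\max_i\chi(G[V_i])$ then gives only $C_\alpha t\log^{\alpha/3}t$, and there is no $K_t$ minor to extract. The reduction to minimum degree $Ct$ in Step~1 is harmless but plays no role.

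The missing idea is to reverse the roles of pieces and quotient.

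\begin{itemize}
\item \emph{Bipartite pieces.} The paper contracts pieces of \emph{bounded chromatic number}: a maximal family of disjoint connected induced bipartite subgraphs, each of order $>k:=\log^{\alpha/3}t$. The quotient then has at most $\v(G)/k\le t\log^{\alpha}t$ vertices, so $\mathrm{LH}(\alpha)$ applies to the quotient itself. Lemma~\ref{lem:contraction} lifts the coloring at a cost of only a factor $2$.
\item \emph{The leftover graph.} The leftover graph $G_2$ has no connected induced bipartite subgraph on more than $k$ vertices. It is handled not by density but by Delcourt--Postle's reduction to chromatic-inseparable subgraphs (Theorem~\ref{thm:DPinsep}). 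In an $s$-chromatic-inseparable induced subgraph with $h<a$, the Gy\'{a}rf\'{a}s path argument (Lemma~\ref{lem:path}) yields an induced $J$ with $\chi(J)>s$. This $J$ is the closed neighbourhood of an induced path, so $b(G_2)\le k$ gives $\alpha(J)\le k^2$.
\item \emph{Closing the bound.} Reed--Seymour then gives $\v(J)\le 2ak^2\le a\log^{\alpha}a$. Now $\mathrm{LH}(\alpha)$ forces $s<C_\alpha a$, that is, $f(G_2,t)\le C_\alpha$, and Theorem~\ref{thm:DPinsep} gives $\chi(G_2)=O(t)$.
\end{itemize}

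The exponent $4/3$ comes from needing both $\v(G)/k\le t\log^{\alpha}t$ and $k^2\ll\log^{\alpha}t$, not from any branch-set accounting.
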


It is easy to see that Theorems~\ref{thm:small} and~\ref{thm:bootstrap} together with Theorem~\ref{thm:DP} prove the Linear Hadwiger conjecture, as formally written out in the following proof.

\begin{proof}[Proof of Theorem~\ref{thm:linear}, assuming Theorems~\ref{thm:small} and~\ref{thm:bootstrap}] 
By Theorem~\ref{thm:DP}, to prove Theorem~\ref{thm:linear} it suffices to show that $\mathrm{LH}(5)$ holds. By Theorem~\ref{thm:small}, we know that, say, $\mathrm{LH}(\frac{1}{3})$ holds. Furthermore, by repeated application of Theorem~\ref{thm:bootstrap}, we can obtain from this via the logical chain
$$\mathrm{LH}\left(\frac{1}{3}\right) \Rightarrow \mathrm{LH}\left(\frac{1}{3}\cdot \frac{4}{3}\right) \Rightarrow \mathrm{LH}\left(\frac{1}{3}\cdot \left(\frac{4}{3}\right)^2\right)\Rightarrow \cdots \Rightarrow \mathrm{LH}\left(\frac{1}{3}\cdot \left(\frac{4}{3}\right)^{10}\right)$$ that also $\mathrm{LH}\left(\frac{1}{3}\cdot \left(\frac{4}{3}\right)^{10}\right)$ holds. Since $\frac{1}{3}\cdot \left(\frac{4}{3}\right)^{10}\approx 5.92\ge 5$, this immediately implies that also $\mathrm{LH}(5)$ and thus Theorem~\ref{thm:linear} holds, concluding the proof.
\end{proof}

\paragraph*{Organization.}

The rest of this paper is divided into two independent halves (Sections~\ref{sec:rapha} and~\ref{sec:bootstrap}), in which we, respectively, prove Theorems~\ref{thm:small} and~\ref{thm:bootstrap}. At the beginning of each section, we give a high-level overview of the key ideas and the structure of the proofs of each of these two theorems, which we suggest to be read first by the reader to have a high-level route and intuition in mind while reading the later subsections, in which we supply more of the technical innovations and details.

Each of the two halves has technical ingredients that mostly just rework previously existing proofs, namely: \begin{itemize} \item [(1)] a quantitative version of a well-known theorem of Pippenger, Frankl and R\"{o}dl~\cite{FR,P} on almost spanning matchings in near-regular hypergraphs with small codegrees (Theorem~\ref{thm:pippenger}) which seems to not have explicitly appeared in writing with this quantitative dependence of parameters (but follows implicitly from the existing proofs), and \item [(2)] a direct consequence of the proofs of Delcourt and Postle~\cite{DP} based on their chromatic separability framework (Theorem~\ref{thm:DPinsep}), essentially reducing the task of coloring a given clique-minor-free graph $G$ to studying its so-called ``chromatic inseparable'' induced subgraphs.\end{itemize} Since both of these results have implicitly but not explicitly existed in the literature before, we have decided to move the proofs of both Theorems~\ref{thm:pippenger} and~\ref{thm:DPinsep} to separate sections in the appendix of this paper. In Section~\ref{sec:conc} we end with some concluding remarks, including an additional consequence of our main result.

\paragraph*{AI disclosure and discussion.} The proof presented in this paper was developed by OpenAI’s ChatGPT model Astra 6, following research directions and feedback supplied by the authors. The authors suggested several possible approaches, but almost none of their specific proof ideas were retained in the final argument. (Their suggestion to further exploit Liu and Luo’s  ideas of contractions involving unusually large independent sets in vertex neighbourhoods is reflected in the contraction argument used in the proof of Theorem~\ref{thm:bootstrap}.)

The project began with the authors asking the model to prove the very dense case of Linear Hadwiger’s conjecture, which they believed was the crucial missing piece of the proof. Concretely, the model was asked to prove that there exists an absolute constant $C>0$ such that, for every $C'>0$, there exists $t_0>0$ such that, for every integer $t\ge t_0$, every graph $G$ satisfying
$
\v(G)\le C't$ and
$
\chi(G)>Ct
$
contains a $K_t$ minor. 

The model surprised the authors by succeeding after only a couple of hours and some encouragement. We then asked it to make the dependence of $t_0$ on $C'$ explicit.
This request yielded a proof of the linear Hadwiger conjecture for graphs $G$ with $\had(G)=t$ and $\v(G) \leq t(\log t)^{1/2-o(1)}$, while by the Delcourt-Postle result establishing it for graphs with $\v(G) \leq t(\log t)^{5}$ (in fact, $t(\log t)^{2+o(1)}$ by a later improved result of Lin~\cite{Lin}) suffices. 

At this point, we became convinced that the full proof is well within reach.
Several approaches were tried. We asked the model, in particular,  to find an original idea to bridge the gap between the two bounds, rather than trying to tighten them. This led to the bootstrap argument used in the present paper.

A posteriori, the linear Hadwiger conjecture appears to be a perfect problem for the current\footnote{as of late September 2026} generation of AI models to tackle.
A large corpus of research on the problem was available for the model to take inspiration from. The proof presented in this paper is, figuratively, contained in the convex hull of existing results. Yet it does not lie on a low-dimensional face of it. Indeed, the methods used in the proof include \begin{itemize}
\item Reed-Seymour's fractional linear Hadwiger theorem~\cite{RS},
\item Delcourt-Postle's reduction of the linear Hadwiger conjecture to small graphs~\cite{DP},
\item Delcourt-Postle's chromatic separability framework, used in the above reduction~\cite{DP},
\item Pippenger-Frankl-R\"{o}dl's matching theorem for large-degree-small-codegree hypergraphs~\cite{P,FR},
\item LP duality, in particular an analysis of (to the best of our knowledge) novel variants of the standard LP-s~\cite{MR1481157} for the fractional chromatic number of a graph which additionally control the \emph{simultaneous} coverage of \emph{pairs} of vertices by independent sets,
\item Gy\'{a}rf\'{a}s's path argument~\cite{Gyarfas}, and
\item the factor-two lifting of a coloring from a minor obtained by contracting connected bipartite pieces  \`a la ~\cite{RS,MR4379303}.
\end{itemize}
(Surprisingly, the key new ideas of the current proof don't rely on density-based arguments unlike all the earlier improvements of the asymptotic bounds.)

The writing of this paper is due to the human authors. In particular, a lot of effort was put into explaining the high-level proof strategies and intuitions for the new ideas in the paper\footnote{A curious reader may want to compare with an early AI-written draft which can be accessed on the GitHub page  \url{https://github.com/snorin239/LinearHadwiger/blob/master/paper/LinearHadwigerAIDraft.pdf}.}. ChatGPT 6-Pro was used for proofreading and typo-checking, and it helped with the creation of the bibliography. The authors take full responsibility for the quality of exposition, the credit to previous work, and the mathematical correctness of all the presented arguments.

\paragraph*{Lean formalization.} A Lean formalization, of the full proof presented here as well as its external inputs, was produced by OpenAI Codex versions 6 Sol and Astra. It is available at \url{https://github.com/snorin239/LinearHadwiger}.

\paragraph*{Acknowledgements.} The first author thanks Neil Rahman for helpful discussions of ~\cite{LL}. The second author would like to thank Micha Christoph, Patryk Morawski and Benny Sudakov for interesting discussions on the topic around a year ago.

\paragraph*{Notation.} Given an integer $n\ge 0$, we use $[n]$ as a shorthand for the set $\{1,\ldots,n\}$ of the first $n$ natural numbers ($[0]:=\emptyset$). For a real number $x$, we use the notation $x^+:=\max\{x,0\}$ for the positive part of $x$.
For a finite set $X$ and an integer $k\ge 0$, we denote by
$\binom{X}{k}$ the collection of all $k$-element subsets of $X$.
All logarithms are natural.

All graphs considered are finite and simple. Given a graph $G$,
we denote its vertex-set and edge-set by $V(G)$ and $E(G)$,
respectively, and write $\v(G):=|V(G)|$ for its order.
We denote by $\Delta(G)$ its maximum degree and by $\alpha(G)$
its independence number. Given a subset $X\subseteq V(G)$,
we write $G[X]$ for the subgraph of $G$ induced by $X$, and
$G-X$ or $G\setminus X$ for the graph obtained from $G$ by deleting the vertices in $X$. For a subset of edges $F\subseteq E(G)$, we denote by $G/F$ the graph obtained from $G$ by contracting all edges in $F$ and omitting any loops arising from the contractions and removing duplicate edges afterwards.

For a vertex $v\in V(G)$, we denote its neighborhood by $N_G(v)$
and its closed neighborhood by $N_G[v]:=N_G(v)\cup\{v\}$.
For a subset $X\subseteq V(G)$, we similarly write
$N_G[X]:=\bigcup_{v\in X}N_G[v]$.

For graphs $G_1$ and $G_2$ on the same vertex-set, $G_1\cup G_2$ denotes
the graph on vertex-set $V(G_1)=V(G_2)$ with edge-set $E(G_1)\cup E(G_2)$.

A hypergraph $\mathcal H$ is a tuple $(V,E)$ where $V$ is a finite set of vertices and $E\subseteq 2^V\setminus \{\emptyset\}$ is the set of \emph{hyperedges}. In a multi-hypergraph, we allow $E$ to be a multi-set of non-empty subsets of $V$. As for graphs, we use the notation $V(\mathcal H):=V$ and $E(\mathcal H):=E$ for the vertex- and hyperedge-set, respectively, and use $\v(\mathcal H)$ as a shorthand for the number of vertices of $\mathcal H$. We say that $\mathcal H$ is \emph{$r$-uniform} for some integer $r\ge 2$ if $E(\mathcal H)\subseteq \binom{V(\mathcal H)}{r}$. The \emph{degree} of a vertex in a (multi-)hypergraph is the number of hyperedges containing it, and the \emph{codegree} of a pair $\{u,v\}$ of two distinct vertices is the number of hyperedges containing both $u$ and $v$ at the same time. A similar definition applies to weighted hypergraphs, where hyperedges are assigned non-negative real weights: Then the weighted (co)degree is defined analogously, except that we consider the total weight of the relevant hyperedges rather than their total number.

\section{Coloring very small $K_t$-minor free graphs with $(4+o(1))t$ colors}\label{sec:rapha}
In this section, we prove Theorem~\ref{thm:small}. Concretely, we wish to show that for every sufficiently large~$t\in \mathbb{R}_{\ge 1}$, every graph $G$ with $h(G)<t$ and $\v(G)\le t(\log t)^{1/2-\varepsilon}$ has chromatic number at most $(4+\varepsilon)t$. Before delving into details, we first give a high-level overview of this proof in the following subsection. During this high-level overview, we already state and explain the two key lemmas in context and give the proof of Theorem~\ref{thm:small} based on these lemmas. Then, we present intuition for and details of the proofs of the key lemmas in two dedicated subsections, namely Subsections~\ref{sec:rapha-rounding} and~\ref{sec:rapha-bounding}.

\subsection{Proof overview}\label{sec:rapha-overview} A simple observation is that from the get-go, we may focus ourselves on graphs $G$ as above which additionally have small independence number; concretely, graphs $G$ as above for which $\alpha(G)\le (\log t)^{1/2-\varepsilon/2}$. The reason for this is simple: As long as an independent set larger than $(\log t)^{1/2-\varepsilon/2}$ exists in the current graph, we can simply pull out such a large independent set, assign its vertices one separate color, and delete it from the graph. Repeating this process as long as possible, we eventually reach an induced subgraph $G'$ of our initial graph $G$ which satisfies $\alpha(G')\le (\log t)^{1/2-\varepsilon/2}$. Crucially, the number of independent sets we pulled out to reach this subgraph $G'$ can never be larger than $\frac{\v(G)}{(\log t)^{1/2-\varepsilon/2}}\le o(t)$, meaning that if we can color $G'$ with at most $(4+\varepsilon/2)t$ colors, then we can color $G$ with at most  $(4+\varepsilon)t$ colors, as desired. Hence, possibly by replacing $\varepsilon$ by $\varepsilon/2$, from now on we may and will assume that the graph $G$ we wish to color by at most $(4+\varepsilon)t$ colors satisfies $\alpha(G)\le (\log t)^{1/2-\varepsilon}, h(G)<t$ and $\v(G)\le t(\log t)^{1/2-\varepsilon}$.

The real starting point of the proof however, and one of the key ideas, (which has already appeared in previous work on the subject~\cite{LL}) is to use the fact that Linear Hadwiger's conjecture, when relaxed to fractional coloring, has been confirmed with the very small constant $2$ by Reed and Seymour already in 1998.
\begin{theorem}[Reed-Seymour~\cite{RS}]\label{thm:RS}
 Every graph $G$ satisfies $\chi_f(G)\le 2\cdot h(G).$
\end{theorem}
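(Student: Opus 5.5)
The plan follows the Reed--Seymour argument: reduce fractional coloring, via LP duality, to a statement about vertex weights, and then build a clique minor from connected pieces. By LP duality, $\chi_f(G)$ equals the maximum of $\sum_v w(v)$ over nonnegative weightings $w$ with $w(I)\le 1$ for every independent set $I$. Hence it suffices to show: for every $w\ge 0$ with $\sum_v w(v) > 2h(G)$, some independent set has weight exceeding $1$. Equivalently, if $w(I)\le 1$ for all independent $I$, then $G$ has a clique minor of size at least $\tfrac12\sum_v w(v)$. By scaling and approximation we may take $w$ rational, and by blowing each vertex up into a clique of $w(v)\cdot N$ copies we may take $w$ integral. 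Blowing up by cliques preserves $\alpha$ and changes $h$ only by a controlled amount. It is cleaner, though, to argue directly with weights.

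The core step is a greedy construction of connected sets.

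\begin{enumerate}
\item Choose a maximal family of pairwise disjoint, pairwise touching connected sets $X_1,\dots,X_k$. Each $X_i$ is chosen \emph{small in weight}: specifically, each $X_i$ is a connected dominating-type piece of weight at most $2$ relative to the remaining structure.
\item The standard trick is to take each $X_i$ as an induced path, or a connected set, that dominates the current remaining graph $G_i$ (the vertices adjacent to all previous branch sets). Choose it to be minimal, so that it is a path whose vertices have a private-neighbour structure.
\item A minimal connected dominating set of a connected graph can be covered by two sets, each of which has weight at most $\max_I w(I)\le 1$. The reason is that along a minimal dominating induced path, the even and the odd vertices each form an independent set. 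So each $X_i$ has weight at most $2$.
\end{enumerate}

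Restricting to the next generation then keeps the sets touching, and the process ends with a $K_k$-minor.

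To make the weight accounting work I would instead use the probabilistic formulation from Reed--Seymour. Pick a random vertex according to the current weight distribution, grow a connected set, and show that in expectation half of the weight remains adjacent to all chosen sets. Summing gives $h(G)\ge \tfrac12\sum_v w(v)$.

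The main obstacle is ensuring that \emph{all} the weight is used up, at a cost of at most $2$ per branch set, while the remaining vertices stay complete to every branch set. Vertices not dominated must be discarded, and their weight must be charged to something. Here I would try the following argument.

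\begin{itemize}
\item In a connected component $H$ of the remaining graph, take a vertex $v$ and a BFS layering from $v$.
\item Use Gyárfás's path argument to find an induced path $P$ such that every component of $H-N[P]$ has small weight.
\item Since $P$ is an induced path, its vertex set splits into two independent sets, so $w(P)\le 2$.
\item Neighbours of $P$ are kept rather than deleted.
\end{itemize}

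This gives a branch set of cost at most $2$. Recursing on a heavy component, possibly with a rooted variant in which vertices must also touch previous branch sets, yields at least $\tfrac12\sum_v w(v)$ branch sets.
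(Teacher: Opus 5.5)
The paper does not prove this theorem; it imports it from Reed and Seymour as a black box. Your proposal therefore has to stand on its own, and it does not. The step everything rests on, that each branch set costs at most $2$ (in units of $\max_I w(I)$), is never established, and as you formulate it, it is false.

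\textbf{Step 3 is false.} You claim that a minimal connected dominating set is an induced path and so splits into two independent sets. Minimal connected dominating sets need not be paths, or even bipartite. Take a triangle $abc$ with a pendant vertex attached to each of $a,b,c$. Every connected dominating set with at least two vertices contains all of $a,b,c$. With $w=1$ on $a,b,c$ and $w=0$ on the pendants, every independent set has weight at most $1$, yet every connected dominating set has weight $3$.

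\textbf{The blow-up remark is also wrong.} Replacing a vertex by a clique on $N$ vertices already forces $h\ge N$. Clique blow-ups also do not encode $\max_I w(I)$; independent-set blow-ups do, but they raise $h$ as well, since an edge becomes $K_{N,N}$. This is exactly why the weighted statement does not follow from the unweighted Duchet--Meyniel-type argument you are imitating.

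\textbf{The Gy\'arf\'as-path repair does not close the gap.} It bounds only $w(P)\le 2$. In your own framework, the remaining vertices must stay adjacent to every branch set, so the vertices of $H-N[P]$ are discarded. Knowing that each component of $H-N[P]$ is light says nothing about their total weight.

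In fact no choice of branch set achieves a per-step cost of $2\max_I w(I)$ when undominated vertices are discarded. Let $v_1v_2v_3$ be a triangle and join each $v_i$ completely to its own clique $Q_i$ on $5$ vertices. Set $w(v_i)=5$ and $w\equiv 1$ on $Q_1\cup Q_2\cup Q_3$.
\begin{itemize}
\item The maximum weight of an independent set is $7$, attained by $v_1$ together with one vertex each from $Q_2$ and $Q_3$.
\item Suppose a connected $X$ meets the triangle in a nonempty set $J$. For $j\notin J$, connectivity forces $X\cap Q_j=\emptyset$, and then $Q_j\cap N[X]=\emptyset$. Hence $w(X)+w(V\setminus N[X])\ge 5|J|+5(3-|J|)=15>14$.
\item If $X$ misses the triangle, it lies inside a single $Q_j$, and the cost is even larger.
\end{itemize}
So the charging scheme is not merely unproved but impossible. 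A correct argument must either use undominated vertices later, in branch sets that merely touch earlier ones, or amortize the cost over several steps.

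Supplying such an accounting is precisely the nontrivial content of the Reed--Seymour proof. Your sentence about a probabilistic formulation, where in expectation half the weight remains adjacent to all chosen sets, is an assertion rather than an argument, and it does not fill this gap.
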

Here, as usual, $\chi_f(G)$ denotes the \emph{fractional chromatic number}, which can be defined as the optimal value of the following linear program. We denote by $\mathcal{I}(G)$ the collection of independent vertex subsets in $G$.

\begin{align*}
\tag{P}
    \text{min} \sum_{I \in \mathcal{I}(G)}&{x_I} \\
    \text{s.t.} \sum_{I\in \mathcal{I}(G): v \in I}&{x_I}\ge 1~~(\forall v \in V(G)), \\
    & x_I \ge 0~~(\forall I \in \mathcal{I}(G)).
\end{align*}

In fact, it is well-known and easy to see that requiring equality in the covering constraints does not change the optimal value, namely $\chi_f(G)$ is also the optimal value of the following LP:

\begin{align*}
\tag{P'}
    \text{min} \sum_{I \in \mathcal{I}(G)}&{x_I} \\
    \text{s.t.} \sum_{I\in \mathcal{I}(G): v \in I}&{x_I}= 1~~(\forall v \in V(G)), \\
    & x_I \ge 0~~(\forall I \in \mathcal{I}(G)).
\end{align*}

By LP duality, $\chi_f(G)$ also equals the optimal value of the following linear dual program of~(P). 

\begin{align*}
\tag{D}
    \text{max} &\sum_{v\in V(G)}{y_v} \\
    \text{s.t.}~~~~&\sum_{v \in I}{y_v}\le 1~~(\forall I \in \mathcal{I}(G)), \\
    &~~~~~y_v \ge 0~~(\forall v\in V(G)).
\end{align*}

Notice that the integer linear program (IP) versions of the primal programs~(P) and~(P') have optimal values exactly $\chi(G)$. Hence, if one could somehow obtain an integer feasible solution of the primal program from an optimal fractional solution by a suitable rounding procedure that does not increase the optimal value by more than a constant factor, then one would obtain a linear upper bound on $\chi(G)$ in terms of $h(G)$ by using Reed and Seymour's aforementioned result. However, this is generally not possible, since it is well-known that in general the ratio between $\chi(G)$ and $\chi_f(G)$ can be of order $\Theta(\log \v(G))$ (as shown, for example, by suitable Kneser graphs), which even in the small order range for $G$ we care about here would be too big of a loss. To circumvent this issue, the first step in the proof of Theorem~\ref{thm:small} consists in showing that if we allow ourselves some more leeway, we can indeed relate ordinary and fractional chromatic number. Here ``some more leeway'' means that we allow to bound $\chi(G)$ not just in terms of $\chi_f(G)$ but instead by $\chi_f(G')$, where $G'=G\cup D$ is a supergraph of $G$ on the same vertex-set as $G$ which is obtained from $G$ by adding some additional edges, forming a sparse, bounded-degree graph $D$. Furthermore, we allow an additive error term depending on the order of the graph, which will be fine here since we discuss graphs of very small order. 

Concretely, we prove the following general graph-theoretic statement that stands on its own and does not directly relate to graph minors. The parameter $\gamma$ is an auxiliary parameter which allows a tradeoff between the maximum degree of the graph $D$ used for augmentation and the size of the additive error term for the rounding.

\begin{lemma}\label{lem:rounding}
Let $n\in\mathbb{N}$, $m\in \mathbb{R}_{\ge 2}$ and $\gamma\in (0,1)$ be such that $n\ge \left(\frac{600(m+2)}{\gamma}\right)^{20m+40}$ and let $G$ be any graph on $n$ vertices with $\alpha(G)\le m$.  There exists a graph $D$ on the same vertex-set as $G$ with $$\chi(G)\le \chi_f(G\cup D)+\gamma n \text{ and }\Delta(D)\le \left(\frac{600(m+2)}{\gamma}\right)^{20m+41}.$$
\end{lemma}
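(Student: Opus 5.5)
We will prove the lemma by an iterative rounding argument in which the graph $D$ is built up to record "obstructions" to rounding. Throughout, write $K:=\left(\frac{600(m+2)}{\gamma}\right)^{20}$, so that the degree bound reads $K^{m+2}\cdot\frac{600(m+2)}{\gamma}$.

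\textbf{Step 1: set up the rounding.} Start with an optimal solution $x$ of (P') for $G$. Because $\alpha(G)\le m$, every independent set has at most $m$ vertices. We therefore view $x$ as a weighted $\le m$-uniform hypergraph $\mathcal H$ on $V(G)$ whose edges are the independent sets. Every vertex has weighted degree exactly $1$, and the total weight is $\chi_f(G)\ge n/m$.

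\textbf{Step 2: pick a near-perfect partial cover by a Pippenger-type matching.} We want a family of pairwise disjoint independent sets covering all but $\gamma n$ vertices. The number of sets in this family should be at most $\sum_I x_I$ plus a small error. The remaining vertices then each receive their own colour, which costs at most $\gamma n$ extra colours.

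To obtain this, blow up the weighting: replace each $x_I$ by roughly $Lx_I$ copies, or sample sets independently with probability proportional to $x_I$. This produces a multi-hypergraph that is near-regular with degree about $L$. A Pippenger--Frankl--R\"odl style nibble (the quantitative Theorem~\ref{thm:pippenger} mentioned in the organization) then yields an almost perfect matching. Such a matching uses about $\sum_I x_I$ hyperedges per ``round''.

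\textbf{Step 3: handle the codegree obstruction with $D$.} Pippenger's theorem needs small codegrees. A pair $\{u,v\}$ is \emph{heavy} if the total $x$-weight of independent sets containing both $u$ and $v$ exceeds a threshold $\delta$, chosen as a suitable power of $\gamma/(m+2)$. Each vertex lies in sets of total weight $1$, each of size at most $m$, so it has at most $m/\delta$ heavy partners. We add all heavy pairs to $D$, which keeps $\Delta(D)\le m/\delta$.

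Adding these edges forbids heavy pairs from sharing a colour class. We then recompute an optimal fractional solution for $G\cup D$, which has value $\chi_f(G\cup D)\ge\chi_f(G)$. The new solution may have new heavy pairs, so we iterate. Since $\alpha$ can only decrease, and each round must either terminate or force structural change, we aim to show the process stops within about $m+2$ rounds, with thresholds $\delta_1>\delta_2>\dots$ shrinking geometrically. This would explain the exponent of the form $20m+41$ in the degree bound.

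One concrete potential to drive termination: if no new heavy pair appears at threshold $\delta_{j+1}$, we stop and apply Pippenger. Otherwise, the weight on sets of some size $k$ that contain a newly forbidden pair must drop. Organising the thresholds by set size $k\in[m]$, with the codegree of $k$-sets controlled at scale $\delta_k$, gives at most $m$ levels.

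\textbf{Step 4: conclude.} Once every pair has weighted codegree at most $\delta$, apply Theorem~\ref{thm:pippenger} to the scaled hypergraph. This needs $n$ large in terms of $1/\delta$, which is where the hypothesis $n\ge K^{m+2}$ enters. The resulting almost perfect matching gives at most $\chi_f(G\cup D)+\gamma n/2$ colour classes, leaving at most $\gamma n/2$ uncovered vertices. Colouring these singly yields $\chi(G)\le\chi(G\cup D)\le\chi_f(G\cup D)+\gamma n$.

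\textbf{Main obstacle.} The hardest part is Step 3: guaranteeing that the iterative augmentation terminates with bounded degree. Recomputing the LP after adding $D$ can create fresh heavy pairs, and a single threshold does not obviously work.

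If iteration proves hard to control, the first alternative is to avoid iteration altogether. Here one would choose, among all fractional solutions, an optimal solution of a modified LP that penalises pair coverage, in the spirit of the pair-controlling LP variants mentioned in the introduction. LP duality would then show that the heavy pairs of that single solution already give a $D$ such that codegrees relative to $G\cup D$ are small.
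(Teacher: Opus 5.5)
Your main route has a genuine gap exactly where you suspect it: nothing bounds the number of augmentation rounds. Forbidding the heavy pairs of an optimal fractional colouring of $G\cup D_j$ need not increase $\chi_f$ at all. The LP can simply move to another optimum with a fresh set of heavy pairs, so there is no monotone potential, and ``the weight on sets of some size $k$ must drop'' does not give termination after about $m+2$ rounds. Since each round may add up to $(m-1)/\delta$ new $D$-edges at a vertex, you get no bound on $\Delta(D)$ independent of $n$.

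The iteration can be rescued without any termination claim by averaging. A non-edge of $G$ is heavy in at most one round and lies in $D$ afterwards, so $\bar x=\frac1L\sum_{j<L}x^{(j)}$ is a fractional colouring of $G$ with all codegrees at most $\delta+1/L$ and weight at most $\chi_f(G\cup D_{L-1})$. However, then $\Delta(D)\le Lm/\delta$, which is of order $m\mu^{-2}$ rather than the paper's $\mu^{-1}$ times a factor $2^{O(m)}(m+2)/\gamma$. This is weaker than the stated bound, though still enough for Theorem~\ref{thm:small}.

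Your fallback is the paper's route, but as stated it misses what makes it work. In the paper, $D$ is not a set of heavy pairs of a primal solution; the optimum $x$ of \textsc{(P-CODEGREE)} already has all codegrees at most $\mu$ by constraint. Instead, $D$ is the set of non-edges whose \emph{dual} variable $z_{uv}$ in \textsc{(D-CODEGREE)} is at least $\mu^{-1}(\gamma/(600(m+2)))^{20m+41}$. Two facts do the work:
(i) $\sum_u z_{uv}\le y_v/\mu\le 1/\mu$. Setting $y_v$ and all $z_{uv}$ at $v$ to $0$ preserves dual feasibility and changes the objective by $-y_v+\mu\sum_u z_{uv}$, which must be $\le 0$ by optimality. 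This gives the degree bound.
(ii) For every independent set $I$ of $G\cup D$, all $z_{uv}$ with $\{u,v\}\subseteq I$ are below the threshold and $|I|\le m$. So the dual constraint gives $\sum_{v\in I}y_v<1+\gamma/2$, making $y/(1+\gamma/2)$ feasible for (D) of $G\cup D$, and hence $\mathrm{OPT}\le\chi_f(G\cup D)+\gamma n/2$.
The primal optimum $x$ is a fractional colouring of $G$ (not of $G\cup D$) with codegrees at most $\mu$ and weight $\mathrm{OPT}$, and it is what goes into the matching step.

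A second gap is in Steps~2 and~4. Theorem~\ref{thm:pippenger} is about $r$-uniform hypergraphs. In the non-uniform hypergraph of independent sets, an almost perfect matching need not have only about $\sum_I x_I$ edges, since it may consist largely of small sets, so your count of colour classes is unsupported. The paper fixes this by padding. With $r=m+2$, it adds a set $S$ of $s=\lceil r\,\mathrm{OPT}-n\rceil$ dummy vertices and spreads $x_I$ evenly over all hyperedges $I\cup T$ with $T\in\binom{S}{r-|I|}$. Then:
\begin{itemize}
\item each dummy has degree $(r\,\mathrm{OPT}-n)/s\in[1-1/s,1]$;
\item codegrees involving a dummy are at most $r/s$ or $2r^2\,\mathrm{OPT}/s^2$;
\item all these error terms are at most $\mu$, because $s\ge 2n/m$ and $n\ge(600(m+2)/\gamma)^{20m+40}$.
\end{itemize}
This is the main place where the lower bound on $n$ enters; Theorem~\ref{thm:pippenger} itself needs none. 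Uniformity then yields $|M|\le(n+s)/r<\mathrm{OPT}+1/r$. Relatedly, the exponent $20m+41=20r+1$ comes from the codegree requirement $\mu=(\xi/100)^{20r}$ of Theorem~\ref{thm:pippenger} at the padded uniformity $r=m+2$, not from $m+2$ rounds of augmentation.
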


We defer a discussion of the interesting new ideas going into the proof of Lemma~\ref{lem:rounding} to the start of the dedicated Subsection~\ref{sec:rapha-rounding} where this lemma is proved.

Note that if we apply Lemma~\ref{lem:rounding} to our setting, then we will naturally set $m$ as $\log^{1/2-\varepsilon} t$, and it makes sense to choose $\gamma$ as $\frac{\varepsilon}{2}\cdot \frac{1}{\log^{1/2-\varepsilon} t}$, as this will ensure that the additive error term in the above estimate is at most $\frac{\varepsilon}{2}\cdot t$ and can thus be neglected for our purposes. This will put the maximum degree bound for $D$ roughly at the order of $e^{O(\log^{1/2-\varepsilon} t \cdot \log\log t})<e^{\log^{1/2} t}$. 

The key question then becomes how to upper bound $\chi_f(G\cup D)$ when all we know about $G$ is that it has Hadwiger number $h(G)<t$, that $\alpha(G)\le m=\log^{1/2-\varepsilon} t$ and that $D$ has degree bounded by $e^{\log^{1/2-\varepsilon/2} t}$. At first look, this may seem a bit hopeless: The naive product bound, using the aforementioned result of Reed and Seymour~\cite{RS}, only yields $\chi_f(G\cup D)\le \chi_f(G)\chi_f(D)\le 2t\cdot (e^{\log^{1/2} t}+1)$ which is much larger than the targeted linear bound for the chromatic number. This is where the second key idea of this proof is needed, which shows that by combining the small independence number of $G$, the degree-bound on $D$ as well as by applying Reed and Seymour's fractional linear Hadwiger bound to \emph{minors} of $G$ rather than just $G$ itself, one can in fact significantly improve over the naive product bound. Concretely, Reed and Seymour's result is applied to suitable minors of $G$ obtained by contracting cleverly chosen matchings. Formally, we prove the following result.

\begin{lemma}\label{lem:bounding}
Let $m, d, t\in \mathbb{R}_{\ge 1}$. Let $G$ be a graph with $h(G)<t$ and $\alpha(G)\le m$, and let $D$ be a graph on the same vertex-set as $G$ such that $\Delta(D)\le d$. Then
$$\chi_f(G\cup D)\le 4t+(d+1)^{4m}.$$
\end{lemma}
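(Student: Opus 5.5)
The plan is to get the $4t$ term from the ``factor-two lifting'' through contracted matchings, using Theorem~\ref{thm:RS} on minors of $G$, and to absorb the damage done by $D$ into the additive term $(d+1)^{4m}$. I will work with the dual program~(D) for $G\cup D$. Fix an optimal dual solution $y$, so that $\chi_f(G\cup D)=\sum_v y_v$ and $\sum_{v\in I}y_v\le 1$ for every $I\in\mathcal I(G\cup D)$. It then suffices to split $V(G)=V_1\cup V_2$ with $\sum_{v\in V_1}y_v\le 4t$ and $|V_2|\le (d+1)^{4m}$, since $y_v\le 1$ for every $v$.

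\textbf{Step 1 (lifting through a matching).} Let $M$ be a matching of $G$, and let $W$ be its vertex set. The graph $G/M$ is a minor of $G$, so Theorem~\ref{thm:RS} gives a fractional coloring $(x_J)$ of $G/M$, in the equality form~(P'), of total weight at most $2t$. Each $J$ expands to a set $S_J\subseteq V(G)$ by replacing every contracted vertex with its edge. Then $G[S_J]$ is a matching plus isolated vertices, so $|S_J|\le 2\alpha(G)\le 2m$. Moreover, $S_J$ splits into two $G$-independent sets $S_J^1,S_J^2$ by taking one endpoint of each matching edge. Summing the dual constraints over these two halves gives $\sum_{v\in W}y_v\le 2\cdot 2t=4t$, provided every $S_J^i$ is also independent in $D$. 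So the whole difficulty is to choose $M$, and the orientation choosing $S_J^1$ versus $S_J^2$, so that no $D$-edge lies inside the halves. I would also allow the split of $S_J$ into halves to depend on $J$.

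\textbf{Step 2 (choosing $M$ to respect $D$).} I would build $M$ greedily, or as a maximal structure, subject to a compatibility condition. Only edges $uv\in E(G)$ with $uv\notin E(D)$ are used. In addition, each matched pair must be ``$D$-separated'' from the others, in the sense that whenever two contracted vertices can lie in a common independent set of $G/M$, the corresponding endpoints can be $2$-coloured consistently with respect to both $G$ and $D$. Since each $S_J$ has at most $2m$ vertices and $D$ has maximum degree $d$, the $D$-closed neighbourhood of any candidate configuration has size at most $(d+1)^{O(m)}$. Each greedy choice therefore forbids only boundedly many later choices. The vertices that cannot be matched or made compatible form the set $V_2$.

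\textbf{Step 3 (bounding the leftover).} I need to show that $|V_2|\le (d+1)^{4m}$. The intended argument is a Ramsey- or branching-type count. If $V_2$ were larger, then inside $V_2$ one could repeatedly pick a vertex, delete its $D$-closed neighbourhood of size at most $d+1$, and continue. The failure of maximality forces the chosen vertices to be pairwise non-adjacent in $G$, since otherwise we could extend $M$. After more than $m$ rounds this produces a $G$-independent set of size greater than $m$, contradicting $\alpha(G)\le m$. Keeping track of pairs rather than single vertices, which accounts for the $2m$ vertices of an $S_J$ and both endpoints of each edge, should turn the natural bound $(d+1)^{m}$ into $(d+1)^{4m}$.

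I expect Step~2 to be the main obstacle: formulating the right compatibility condition on $M$ so that every expanded set $S_J$ splits into two sets independent in $G\cup D$ simultaneously for all $J$. If a single matching cannot achieve this, I would instead average over a random orientation of $M$, or over several matchings. The aim would be to keep the lifting factor at $2$ while placing the $D$-conflicts on the small exceptional set $V_2$.
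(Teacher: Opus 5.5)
There is a genuine gap. It sits exactly where you flag it in Step~2, and it is more than a missing formulation. The property Step~1 needs cannot be achieved with a bounded exceptional set, so the bound $|V_2|\le (d+1)^{4m}$ claimed in Step~3 is false for every matching that makes Step~1 work. (The computation in Step~1 itself is fine; the problem is its hypothesis.)

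Since Theorem~\ref{thm:RS} is used as a black box, you have no control over which independent sets $J$ of $G/M$ carry weight. You therefore need, for \emph{every} independent $J$, that the matching edges $N_J$ contracted into $J$ admit two complementary transversals that are both independent in $D$. Equivalently, the graph on $V(N_J)$ with edge set $N_J\cup E(D[V(N_J)])$ must be bipartite. An odd $D$-cycle on $G$-independent vertices destroys this.

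\emph{Counterexample.} Let $G$ be the disjoint union of three cliques $Q_1,Q_2,Q_3$ of size $N$, so $\alpha(G)=3$ and $h(G)=N<t:=N+1$. Let $D$ consist of $N$ vertex-disjoint triangles, each meeting every $Q_i$ once, so $d=2$. Every matching of $G$ lies inside the cliques, and matching edges in different cliques are non-adjacent in $G/M$. Hence if all three vertices $a,b,c$ of some $D$-triangle were matched, the three matching edges at $a,b,c$ would form an independent $J$ with the triangle $abc$ inside $V(N_J)$, and no admissible split would exist. So each of the $N$ triangles has a vertex in $V_2$, giving $|V_2|\ge N$ against $(d+1)^{4m}=3^{12}$.

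This example also exposes three further problems.
\begin{itemize}
\item Your pairwise compatibility condition holds for every pair of matching edges here, so it does not imply what Step~1 needs.
\item Your Step~3 argument fails, because the unmatched vertices can all lie in $Q_1$. What blocks extending $M$ is the compatibility constraint, not $G$-non-adjacency.
\item Neither fallback rescues the plan. Replace the triangles by copies of $K_{d+1}$ spread over $d+1$ cliques: an admissible matching then covers at most two vertices per copy, so averaging Step~1 over several matchings loses a factor of order $d$ (e.g.\ for uniform $y$). That is exactly the product-type loss the lemma is meant to avoid. A random orientation of $M$ does not help either, since a random transversal is in general not independent in $D$.
\end{itemize}

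The paper avoids this by asking for much less and paying for it elsewhere. It only requires that no cycle of length at most $2m$ in $M\cup D$ uses an edge of $M$; odd cycles inside $D$ are harmless. Observation~\ref{obs:bridges} then gives a \emph{single} transversal of $N_J$ that is independent in $G\cup D$, and the other endpoints are discarded.

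To make one transversal worth essentially half the weight, two further ingredients are used. First, the contracted vertex of $uv$ gets weight $\min\{w(u),w(v)\}$ with $w=y$. Second, $M$ is built by the weight-balanced greedy procedure of Lemma~\ref{lem:magicalmatching}: process vertices in decreasing weight, and match each to its heaviest unprocessed $G$-neighbour outside its radius-$(2m-1)$ ball in the current $M\cup D$.

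The random-threshold reformulation then does the bookkeeping. Each $G[X_t]$ is $\sum_{j=0}^{r}(d+1)^j$-degenerate and $\alpha(G)\le m$, which bounds $\sum_{v\in R}w(v)+\sum_{uv\in M}|w(u)-w(v)|$ by $m(1+(d+1)^{2m})\le(d+1)^{4m}$. Thus the loss is not confined to a small exceptional vertex set; it is spread over all matching edges through the imbalance term. Controlling that term is the idea your proposal is missing.
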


Again, we defer the intuition and high-level overview of the ideas going into the proof of this lemma to the beginning of the dedicated Subsection~\ref{sec:rapha-bounding}.

Having shown both key lemmas, one can now easily conclude the proof: The term $4t$ coming from the upper bound on $\chi_f(G\cup D)$ in the previous lemma already has the correct order, so to prove our desired bound of $(4+\varepsilon)t$ on the total chromatic number all we need is that the second term $(d+1)^{4m}$ is sublinear in $t$. Plugging in our bounds $m=(\log t)^{1/2-\varepsilon}$ and $d=e^{\log^{1/2} t}$ justified by our previous discussion, we can bound this term as $(d+1)^{4m}\le e^{O(m\log d)}=e^{O((\log^{1/2-\varepsilon} t)\cdot (\log^{1/2} t))}=e^{O(\log^{1-\varepsilon} t)}$, which is certainly sublinear. Hence, we indeed obtain that the second additive term is negligible, giving the desired upper bound of $(4+\varepsilon)t$ for the chromatic number of our initial graph for sufficiently large $t$, and proving Theorem~\ref{thm:small}, as desired. In retrospect, it is now easy to see why we chose $t\cdot (\log t)^{1/2-\varepsilon}$ as the cutoff on the graph order in Theorem~\ref{thm:small}: Putting the two lemmas together and analyzing the constraints necessary to obtain a linear bound on the chromatic number in terms of~$t$, one can see that the threshold where the error terms become too large to handle is exactly around $t\cdot (\log t)^{1/2-o(1)}$. We can also see that it is crucial to obtain (at least asymptotically) a dependency of the maximum degree given by Lemma~\ref{lem:rounding} on $\gamma$ which is not too large, for otherwise the same argument may push the cut-off in Theorem~\ref{thm:small} to $t(\log t)^{o(1)}$, which would make it impossible to bootstrap the result using Theorem~\ref{thm:bootstrap} further to cover the full range of relevant graph orders necessary to prove Theorem~\ref{thm:linear}.

Let us put the outline presented so far together in a formal proof of Theorem~\ref{thm:small} based on the key Lemmas~\ref{lem:rounding} and~\ref{lem:bounding}, before we proceed to presenting the intuition for and proofs of these two key lemmas in the next two subsections.

\begin{proof}[Proof of Theorem~\ref{thm:small}, assuming Lemmas~\ref{lem:rounding} and~\ref{lem:bounding}]
Let $\varepsilon\in (0,1)$ be fixed and let $t\in \mathbb{R}_{\ge 1}$ be a sufficiently large real number (throughout assumed to be large enough for any estimates used in the following which hold only for sufficiently large $t$ to be satisfied for $t$). 

Let $G$ be any given graph with $h(G)<t$ and $\v(G)\le t(\log t)^{1/2-\varepsilon}$. Our goal is to show that $G$ has chromatic number at most $(4+\varepsilon)t$.

Let $I_1,I_2,\ldots, I_k$ for some $k\in \mathbb{Z}_{\ge 0}$ be a sequence of disjoint independent sets, each of size at least $(\log t)^{1/2-\varepsilon/2}$, built by the following greedy process: For $i\ge 1$, having already defined all $I_j, 1\le j<i$, if $\alpha\left(G-\bigcup_{1\le j<i}I_j\right)\ge (\log t)^{1/2-\varepsilon/2}$, then we define $I_i$ as a maximum-size independent set in $G-\bigcup_{1\le j<i}I_j$. Otherwise, we stop the process and set $k:=i-1$. It is easy to verify that the induced subgraph $G':=G-(I_1\cup \cdots \cup I_k)$ left at the end of the process has $\alpha(G')\le (\log t)^{1/2-\varepsilon/2}=:m$ and $$\chi(G)\le \chi(G')+k\le \chi(G')+\frac{\v(G)}{(\log t)^{1/2-\varepsilon/2}}\le \chi(G')+\frac{t}{\log^{\varepsilon/2} t}<\chi(G')+\frac{\varepsilon}{3}t.$$

Thus, in what follows it suffices to show that $\chi(G')\le \left(4+\frac{2\varepsilon}{3}\right)t$. Since we are immediately done with this when $\v(G')<t$, we may assume $\v(G')\ge t$ in what follows. We first apply Lemma~\ref{lem:rounding} to $G'$ with $m$ as defined above, and $\gamma:=\frac{\varepsilon}{2\log^{1/2-\varepsilon}t}$. Note that this is feasible, since one can check that

$$\left(\frac{600(m+2)}{\gamma}\right)^{20m+40}=e^{\Theta(\log^{1/2-\varepsilon/2} t\log\log t)} <t\le \v(G')$$ with our choices of $m$ and $\gamma$ and for every sufficiently large $t$.

We thus obtain a graph $D$ on the same vertex-set as $G'$ such that 
$$\chi(G')\le \chi_f(G'\cup D)+\gamma \v(G')\le\chi_f(G'\cup D)+\frac{\varepsilon}{2}t,$$
(where we used our definition of $\gamma$ and that $\v(G')\le \v(G)\le t \log^{1/2-\varepsilon} t$), as well as
$$\Delta(D)\le \left(\frac{600(m+2)}{\gamma}\right)^{20m+41}\le \exp(50m\log\log t)\le \exp((\log t)^{1/2})=:d,$$ where we used that $t$ is chosen sufficiently large.

Next, we apply Lemma~\ref{lem:bounding} to $G'$ and $D$ with parameters $m$ and $d$ and obtain that
$$\chi(G)\le \chi(G')+\frac{\varepsilon}{3}t\le \chi_f(G'\cup D)+\frac{5\varepsilon}{6}t\le 4t+(d+1)^{4m}+\frac{5\varepsilon}{6}t$$
$$\le \left(4+\frac{5\varepsilon}{6}\right)t+\exp(4m\log (d+1))\le \left(4+\frac{5\varepsilon}{6}\right)t+\exp(8(\log t)^{1-\varepsilon/2})\le (4+\varepsilon)t,$$ as desired (we used the fact that $t$ is chosen sufficiently large). This concludes the proof.
\end{proof}

\subsection{Chromatic number vs. fractional chromatic number via augmentation}\label{sec:rapha-rounding}

In this section, we discuss and present the proof of the first key lemma towards the proof of Theorem~\ref{thm:small}, namely Lemma~\ref{lem:rounding}. Before going into the details of its proof, let us first motivate the lemma statement and proof on a high-level. 

To start with, consider just any graph $G$ on $n$ vertices with $\alpha(G)\le m$ for some parameter $m$ (which we think of as being fairly small compared to the target chromatic number and compared to~$n$), and fix an optimal fractional coloring of it, i.e., an optimal solution $(x_I)_{I\in \mathcal{I}(G)}$ of the primal linear program~(P') for the fractional chromatic number. By definition, this means $\sum_{I\in \mathcal{I}(G)}x_I=\chi_f(G)$ and $\sum_{I\in \mathcal{I}(G): v\in I}x_I=1$ for every $v\in V(G)$.

One interesting way to see this from a different perspective is then to think of the independent sets $I$ of $G$ that are supported in the fractional coloring (with $x_I>0$) as the edges of a hypergraph $\mathcal{H}$ on ground set $V(G)$, and of the variables $(x_I)_{I\in \mathcal{I}(G)}$ as a positive weighting of the edges of $\mathcal{H}$ of total weight $\chi_f(G)$. The constraints then tell us that the ``weighted degree'' of every vertex~$v$, i.e. the sum of the weights of incident hyperedges at $v$, equals $1$ for every vertex, so this weighted hypergraph is in some sense regular. Furthermore, since $\alpha(G)\le m$, all hyperedges have size at most $m$ and are thus not too large. To get the key idea across, let us even assume for now (we will later see how to remove this very idealistic assumption) that the hyperedges in $\mathcal{H}$ all have size $m$. In this setup, an ideal scenario which would allow us to ``round'' the optimal fractional solution to a proper coloring of $G$ at not much cost would be if we could find a very large, almost spanning, matching $M$ in the hypergraph $\mathcal{H}$, which covers all but at most $\gamma n$ of the vertices of $G$ for some small $\gamma\in (0,1)$. Indeed, then giving a separate color to each independent set in $M$ and coloring the outside vertices each with a unique color would show $\chi(G)\le |M|+\gamma n\le \frac{n}{m}+\gamma n=\frac{n}{\alpha(G)}+\gamma n\le \chi_f(G)+\gamma n$, which is the inequality we seek to prove in Lemma~\ref{lem:rounding} but with no augmentation needed, i.e. $D=\emptyset$. However, clearly this is way too optimistic: In general, regularity of a (weighted) uniform hypergraph does not guarantee such a large matching. However, a classical result due to Pippenger, Frankl and R\"{o}dl (often referred to as \emph{Pippenger's theorem})~\cite{P,FR}, states that for (multi-)hypergraphs one can actually indeed guarantee such almost-spanning matchings in (near-)regular uniform hypergraphs under the additional assumption that all the \emph{codegrees} are very small relative to the degrees, i.e., distinct vertices share only a negligible amount of their incident edges. 

To turn our dreams into provable facts, we would like to make use of this result. This needs to overcome several obstacles:
\begin{itemize}
    \item How can we apply the result to \emph{weighted} hypergraphs?
    \item How can we guarantee the codegree condition?
\end{itemize}
The first question actually has an easy solution. First of all, the known proofs of Pippenger's theorem also work for multi-hypergraphs, where we are allowed to repeat hyperedges arbitrarily many times. But then, given a weighted hypergraph such as our $\mathcal{H}$, we can simply take a very large number $N$ and replace every hyperedge of our weighted hypergraph $\mathcal{H}$ with weight $w$ by approximately $wN$ copies of itself. This will give rise to a multi-hypergraph with proportional actual degrees and codegrees, to which we then can apply Pippenger's theorem. Since the original hypergraph and the new multi-hypergraph created in this way have corresponding matchings and the same vertex-set, we can directly translate an almost spanning matching obtained in the multi-hypergraph back to an almost spanning matching in our weighted hypergraph.

While this reduction would suffice to deduce the desired result on a \emph{qualitative} level from the literature, for our purposes we will need very explicit \emph{quantitative} dependencies between the ratio of leftover vertices and the necessary control on the ratio between codegrees and degrees. Thus, it will be convenient for us to reprove Pippenger's theorem with these more precise dependencies here, and to do it directly in the indicated weighted setting that we need. The following is the formal statement that we are going to use.

\begin{theorem}\label{thm:pippenger}
Let $r\ge2$ be an integer, $0<\xi\le1$, and set
$\mu=(\xi/100)^{20r}$. Suppose $\mathcal H$ is an $r$-uniform hypergraph equipped with a non-negative weighting $(x_e)_{e\in E(\mathcal H)}$ of its edges, such that for every vertex $v\in V(\mathcal H)$ we have
$$\sum_{e\in E(\mathcal H): v\in e}x_e\in [1-\mu,1],$$ while for every pair of distinct vertices $u,v\in V(\mathcal H)$ we have
$$\sum_{e\in E(\mathcal H): \{u,v\}\subseteq e}x_e\le \mu.$$
Then there exists a matching $M$ in $\mathcal H$ covering all but at most $\xi \v(\mathcal H)$ vertices of $\mathcal H$.
\end{theorem}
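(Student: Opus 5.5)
We prove Theorem~\ref{thm:pippenger} by the Rödl nibble, run directly on the weighted hypergraph, with the number of rounds and the error propagation tracked explicitly. The weighting $x$ is a fractional perfect matching up to $\mu$: every weighted degree lies in $[1-\mu,1]$ and every weighted codegree is at most $\mu$.

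\emph{One nibble round.} Fix a small step size $\theta$, to be chosen later as roughly a small power of $\xi$. Select each edge $e$ independently with probability $\theta x_e$; this is legitimate since each $x_e\le 1$. Let $M_0$ be the set of selected edges that meet no other selected edge. Then $M_0$ is a matching, and we delete all vertices covered by any selected edge.

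For a fixed vertex $v$:
\begin{itemize}
\item The probability that $v$ survives is $\prod_{e\ni v}(1-\theta x_e)\approx e^{-\theta}$, up to a $(1\pm O(\theta^2+\mu))$ factor.
\item The expected weight of selected edges at $v$ that are not isolated is $O(r\theta^2)$.
\end{itemize}
So in one round we lose, in expectation, only an $O(r\theta)$ fraction of the removed vertices to overlaps rather than to $M_0$.

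\emph{Restoring the hypothesis.} For the surviving vertex set $V'$, keep the edges contained in $V'$ and rescale their weights by $e^{r\theta}\cdot e^{-\theta}$. The point is this: conditional on $v$ surviving, an edge $e\ni v$ survives iff its other $r-1$ vertices survive. This happens with probability about $e^{-(r-1)\theta}$, and the codegree bound $\mu$ makes these events nearly independent of $v$'s survival.

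The rescaled weighted degree of $v$ therefore has expectation $1\pm O(\theta^2+r\mu)$. Codegrees grow by at most the factor $e^{(r-1)\theta}$. Degrees of surviving vertices are not automatically concentrated, because the events are dependent. Instead of concentration inequalities, which would need lower bounds on individual weights, I would use an \emph{alteration}: call $v$ \emph{bad} if its new degree deviates from $1$ by more than $\delta$. A second-moment computation, with covariances controlled by codegrees $\le\mu$, bounds the variance of each degree by $O(r^2\theta\mu)$. Chebyshev then gives that only a small expected fraction of vertices are bad; we discard them as uncovered, and also trim degrees down to at most $1$ by scaling. Since the statement only requires existence, we fix an outcome achieving at most the expected loss rather than whp concentration.

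\emph{Iteration and bookkeeping.} Run $T\approx \theta^{-1}\log(2/\xi)$ rounds, so that $e^{-\theta T}\le \xi/2$ and the naturally surviving vertices are few. The total loss is:
\begin{itemize}
\item overlaps: $\sum O(r\theta)\cdot(\text{removed})=O(r\theta)\,n$;
\item bad vertices: $T\cdot O(\text{Chebyshev fraction})$.
\end{itemize}
The errors evolve as follows. The degree error becomes $\delta_{i+1}\le \delta_i+O(\theta^2+r\mu_i)$, and the codegree bound becomes $\mu_{i+1}\le e^{r\theta}\mu_i$, which stays $\le e^{rT\theta}\mu=(2/\xi)^{r}\mu$. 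Choose $\theta\approx \xi/(100r)$. Then $\mu=(\xi/100)^{20r}$ is small enough that $(2/\xi)^{r}\mu$, multiplied by $T$ and by the $\theta^{-1}\delta^{-2}$ factors from Chebyshev, is still $\le \xi/10$, and all losses total at most $\xi n$.

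The matching $M$ is the union of the $M_0$'s from all rounds; these are disjoint and vertex-disjoint since covered vertices are deleted.

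\emph{Main obstacle.} The delicate step is the per-round degree analysis with exponentially tracked error terms. One must verify the following:
\begin{itemize}
\item The near-independence of vertex survivals (through codegrees) makes the expected new degree correct to within $O(\theta^2+r\mu)$.
\item The variance bound holds uniformly even though the weights are arbitrary reals.
\end{itemize}
I would first attempt this by writing the new degree as a sum over $e\ni v$ of indicator products, expanding the second moment over pairs $e,e'$, and splitting by $|e\cap e'|$: pairs with $|e\cap e'|\ge2$ contribute at most the codegree $\mu$, while disjoint pairs are nearly independent up to $O(\theta\mu)$ corrections. The crude exponent $20r$ in $\mu$ should leave ample room to absorb these losses.
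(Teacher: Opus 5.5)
\paragraph{Where the proposal breaks.} Your framework is the right one: select edges with probability proportional to $x_e$, keep the isolated selected edges, bound new degrees by a second moment, and let the large exponent in $\mu$ absorb the losses. Your variance estimate $O(r^2\theta\mu)$ is essentially the paper's Efron--Stein bound $\operatorname{Var}X_v\le ar^2\Delta_2D$.

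The gap is the ``restoring the hypothesis'' step. Your next round needs the \emph{worst-case} invariant that every remaining vertex has degree in $[1-\delta_{i+1},1]$. You use it to say that every vertex survives with probability about $e^{-\theta}$ and that every expected new degree is about $1$. Chebyshev only bounds the expected number of bad vertices, and discarding them does not restore the invariant. Deleting a vertex $u$ removes every edge through $u$, so each remaining $v$ loses $d(v,u)$ of its degree; per-vertex trimming of high degrees has the same effect.

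Since $\v(\mathcal H)$ is unrelated to $\mu$, bad vertices occur in every round. Nothing in your argument bounds how much a given good vertex loses to them, and the recursion $\delta_{i+1}\le\delta_i+O(\theta^2+r\mu_i)$ simply omits this effect. A damaged vertex then survives each round with probability $e^{-\theta d(v)}>e^{-\theta}$. Its deficit is never repaired by your uniform rescaling, so it is likely to end up uncovered.

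Suppose you control these losses only through Markov's inequality, either by iterating the deletion or by carrying an exceptional set with merely an upper degree bound. Then, roughly, each round multiplies the exceptional fraction by a factor of order $r/\tau$, where $\tau\lesssim 1/T$ is the per-round degree tolerance. Over $T\approx 100r\xi^{-1}\log(2/\xi)$ rounds this is about $\exp(\Omega(T\log T))$, which $\mu^{-1}=(100/\xi)^{20r}$ does not cover.

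\paragraph{The missing idea.} You need to replace the per-vertex two-sided invariant by an aggregate one, as the paper does in Lemma~\ref{lem:app3-one-round}.

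\begin{itemize}
\item Only the upper bound $\Delta(\mathcal H)\le D$ is maintained. The shortfall is measured by the deficit $\deff(D,\mathcal H)=\sum_v(1-d(v)/D)=\v(\mathcal H)-\frac rD\e(\mathcal H)$.
\item In each round, every vertex $v$ is additionally deleted, independently, with probability $1-\exp(-a(1-d(v)/D))$. Thus \emph{every} vertex survives with probability exactly $e^{-a}$ regardless of its degree. A low-degree vertex therefore never distorts anyone else's expected degree and never has to be declared bad.
\item Instead of rescaling weights up, the degree cap is lowered to $D'=De^{-a(r-1)}$ and weight is trimmed at vertices exceeding it, so codegrees never grow.
\item The second-moment bound is used only to show that the total trimmed weight $W$ satisfies $\mathbb E W\le \frac{a^2}{2}D'\,\v(\mathcal H)$. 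Trimmed weight enters the deficit linearly.
\end{itemize}

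One then shows that $|Z|+8ar\,\v(\mathcal H')+\deff(D',\mathcal H')\le 8are^{-a/2}\v(\mathcal H)+\deff(D,\mathcal H)$ holds in expectation. Induction on $\lceil D\rceil$, after normalising $\Delta_2=1$, gives a matching missing at most $\deff(D,\mathcal H)+(2^{15}r^5\Delta_2/D)^{1/(2r)}\v(\mathcal H)$ vertices; taking $D=1$ yields the theorem. Because every error is booked additively and only in expectation, nothing compounds across rounds. This is what makes the polynomial dependence $\mu=(\xi/100)^{20r}$ achievable.
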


The proof of Theorem~\ref{thm:pippenger} has no essential new ideas and basically just reworks existing proofs of Pippenger's theorem in the weighted setup (which, as explained, is actually equivalent to the non-weighted setup) while precisely tracking the quantitative dependencies. We thus defer its proof to Section~\ref{app:matching} of the Appendix.

The answer to the second question above leads us to the bounded-degree augmentation $D$ of our initial graph. The simple answer is that in general, we have no handle on the (weighted) codegrees of our auxiliary hypergraph $\mathcal{H}$ coming from the fractional coloring, and in general there is no guarantee why $\sum_{I\in \mathcal{I}(G):I\supseteq \{u,v\}}x_I$ would have to be small compared to $1$ for every pair of distinct vertices $u,v$. However, it turns out that one can obtain control on these weighted codegrees by the addition of a few new edges, collected in an augmentation-graph $D$ of bounded degree. This is probably the most important new and key idea going into the proof of Lemma~\ref{lem:rounding}. To bring this intuitive idea to fruition formally, the right way turns out to be to analyze a modified dual pair of LPs, whose primal is obtained from the program~(P) for the fractional chromatic number by simply adding constraints enforcing an upper bound $\mu>0$ for the weighted codegrees:

\begin{align*}
\tag{P-CODEGREE}
    \text{min} \sum_{I \in \mathcal{I}(G)}{x_I} \\
    \\
    \text{s.t.}~~~~~~~\sum_{I\in \mathcal{I}(G): v \in I}&{x_I}\ge 1~~(\forall v \in V(G)), \\
    \sum_{I\in \mathcal{I}(G): \{u,v\} \subseteq I}&{x_I}\le \mu~~(\forall u\neq v \in V(G), uv\notin E(G)), \\
    & x_I \ge 0~~(\forall I \in \mathcal{I}(G)).
\end{align*}

The dual program then becomes:

\begin{align*}
\tag{D-CODEGREE}
    \text{max} \sum_{v\in V(G)}{y_v}-\mu&\sum_{\substack{uv\in\binom{V(G)}{2},\\ uv\notin E(G)}}z_{uv} \\
    \\
    \text{s.t.}~~~~\sum_{v \in I}{y_v}-\sum_{uv\in\binom{I}{2}}z_{uv}&\le 1~~(\forall I \in \mathcal{I}(G)), \\
  y_v &\ge 0~~(\forall v\in V(G)), \\
   z_{uv}&\ge 0~~\left(\forall uv\in \binom{V(G)}{2}, uv\notin E(G)\right).
\end{align*}

It turns out that a simple argument shows that in an optimal dual solution to~\textsc{(D-CODEGREE)} the sum of dual variables $z_{uv}$ incident at any fixed vertex $u$ is bounded by $\frac{1}{\mu}$, and hence, if we include into $D$ all those non-edge pairs $u,v$ for which $z_{uv}$ exceeds a certain threshold, we can ensure that $D$ has bounded degree. Picking pairs $u,v$ with a large dual variable into $D$ is a natural choice: Then any independent set $I$ in the graph $G\cup D$ only includes non-edge pairs $u,v$ where the dual variable $z_{uv}$ in the above optimal dual solution was small. The constraints then give $\sum_{v \in I}{y_v}\le 1+\sum_{uv\in\binom{I}{2}}z_{uv}$, which (for a suitably small choice of what it means for a dual variable to be small) can be bounded by $1+\delta$ for some small $\delta>0$. Hence, after rescaling the dual variables $y_v$ by $\frac{1}{1+\delta}$ we create a feasible dual solution for the ordinary dual program~(D) of the augmented graph $G\cup D$. This way, we can give a lower bound on $\chi_f(G\cup D)$ in terms of the optimal value of the program~\textsc{(D-CODEGREE)} for the original graph $G$. In particular, by turning to the primal program~\textsc{(P-CODEGREE)} for $G$ again, we will obtain a feasible fractional coloring of $G$ whose total weight is controlled in terms of $\chi_f(G\cup D)$ with all codegrees bounded by $\mu\ll 1$ (in the weighted sense we have been discussing previously). Having reached this desired situation, we can now apply the aforementioned strategy via the weighted Pippenger theorem (Theorem~\ref{thm:pippenger}) to obtain a decent upper bound on the chromatic number of our original graph $G$ in terms of $\chi_f(G\cup D)$ while only incurring a controllable error in both of the steps.

This concludes the description of the intuition and key novel ideas going into the proof of Lemma~\ref{lem:rounding}. 

One adaptation we will have to do in the real proof is that in our previous description of the rounding step, we assumed the weighted hypergraph $\mathcal{H}$ built from a fractional coloring to be uniform. It turns out that this idealistic assumption is not difficult to remove by simply adding a few dummy vertices to make our hypergraph uniform. Concretely, it will turn out to be convenient to enlarge all independent sets (of size at most $m$) by some extra dummy vertices to become of size exactly $m+2$, and doing so while distributing the weights on these extended hyperedges equally and including all dummy vertices equally often. It turns out that doing this in the right way will preserve approximate weighted regularity of the hypergraph also at the dummy vertices, while also keeping the maximum weighted codegrees small. This then enables an application of Theorem~\ref{thm:pippenger}, providing an almost spanning matching in this extended hypergraph which then remains almost spanning after shrinking all the hyperedges by excluding the dummy vertices. 

We are now ready to present the formal proof of Lemma~\ref{lem:rounding}, following the outline above.

\begin{proof}[Proof of Lemma~\ref{lem:rounding}]
Let $n\in\mathbb{N}$, $m\in \mathbb{R}_{\ge 2}$ and $\gamma\in (0,1)$ be such that $n\ge \left(\frac{600(m+2)}{\gamma}\right)^{20m+40}$ and let $G$ be any graph on $n$ vertices with $\alpha(G)\le m$.  Our goal is to show that there is a graph $D$ on the same vertex-set as $G$ such that 
$$\chi(G)\le \chi_f(G\cup D)+\gamma n \text{ and }\Delta(D)\le \left(\frac{600(m+2)}{\gamma}\right)^{20m+41}.$$
Without loss of generality, possibly by replacing $m$ with $\lfloor m\rfloor$, we may assume that $m\in \mathbb{N}$ in the following.

Set $\mu:=\left(\frac{\gamma}{300(m+2)}\right)^{20m+40}$ and in the following let us denote by $\overline{E}$ the set of unordered pairs of distinct vertices of $G$ which are nonadjacent in $G$. Let $(x_I)_{I\in \mathcal{I}(G)}$ as well as $(y_v)_{v\in V(G)}, (z_{uv})_{uv\in\overline{E}}$ be a dual pair of optimal solutions to the modified LPs~\textsc{(P-CODEGREE)} and~\textsc{(D-CODEGREE)} for $G$ and $\mu$, and let $\mathrm{OPT}$ denote their common optimal value. Observe that it follows directly from the constraints of~\textsc{(D-CODEGREE)} that for every $v\in V(G)$ we have $y_v\le 1$ (apply the independent set-constraint to the singleton independent set $\{v\}$). In particular, $\mathrm{OPT}\le \sum_{v\in V(G)}y_v\le n$. 

We next claim that for every vertex $v\in V(G)$, the sum over the variables $z_{uv}$ over incident non-edge pairs is small. Concretely, for every $v\in V(G)$ we claim that
$$\sum_{u\in V(G):uv\in \overline{E}}z_{uv}\le \frac{1}{\mu}.$$
Indeed, suppose that for some $v$ we had $\sum_{u\in V(G):uv\in \overline{E}}z_{uv}> \frac{1}{\mu}$. Then set the value of $y_v$ and all variables $z_{uv}$ with $uv\in \overline{E}$ (i.e. of incident non-edge pairs) to $0$. It is easy to see that this maintains feasibility, since all the independent set constraints for independent sets $I$ including $v$ are equivalent to the previous constraints applied to $I\setminus \{v\}$, and for all independent sets that do not include $v$ the validity of the constraints remains unaffected since none of the involved variables are changed. Since we assumed an optimal dual solution, it follows that this process must not have increased the objective function, which by setting the above-mentioned variables to $0$ has changed by exactly $-y_v+\mu\sum_{u: uv\in \overline{E}}z_{uv}$. Hence, the last expression must be non-positive, which after rearranging implies $\sum_{u\in V(G):uv\in \overline{E}}z_{uv}\le \frac{y_v}{\mu}\le \frac{1}{\mu}$, as desired. 

Let now $D$ be defined as the graph on the same vertex-set as $G$ whose edges are exactly those pairs $uv\in\overline{E}$ for which $z_{uv}\ge \frac{1}{\mu}\left(\frac{\gamma}{600(m+2)}\right)^{20m+41}=\frac{\gamma}{600(m+2) 2^{20m+40}}.$ It then follows directly from our above inequality that at any vertex $v$ the number of incident non-edge pairs $uv$ with $z_{uv}$ at least this threshold can be at most $\left(\frac{600(m+2)}{\gamma}\right)^{20m+41}$, and hence the so-defined graph $D$ satisfies the desired maximum degree bound $\Delta(D)\le \left(\frac{600(m+2)}{\gamma}\right)^{20m+41}$.

Next, consider any independent set $I$ in $G\cup D$. By definition of $D$, this is equivalent to $I$ being an independent set in $G$ such that all pairs $uv\in\overline{E}$ with $\{u,v\}\subseteq I$ satisfy $z_{uv}<\frac{\gamma}{600(m+2) 2^{20m+40}}$. Hence, we find, using the constraint of the program~\textsc{(D-CODEGREE)} at $I$, that
$$\sum_{v\in I}y_v\le 1+\sum_{uv\in\overline{E}: \{u,v\}\subseteq I}z_{uv}\le 1+m^2 \frac{\gamma}{600(m+2)2^{20m+40}}<1+\frac{\gamma}{2},$$ where we used the fact that $|I|\le \alpha(G)\le m$ as well as $m\ge 2$ (the last step is very lossy). This implies that $\left(\left(\frac{1}{1+\frac{\gamma}{2}}\right)y_v\right)_{v\in V(G)}$ forms a feasible solution of the ordinary dual LP~(D) for $G\cup D$, and it follows that
$$\chi_f(G\cup D)\ge \frac{1}{1+\frac{\gamma}{2}}\sum_{v\in V(G)}y_v\ge \left(1-\frac{\gamma}{2}\right)\left(\sum_{v\in V(G)}y_v-\mu\sum_{uv\in\overline{E}}z_{uv}\right)$$
$$=\mathrm{OPT}-\frac{\gamma}{2}\mathrm{OPT}\ge \mathrm{OPT}-\frac{\gamma}{2}n.$$
In particular, it follows that $\sum_{I\in\mathcal{I}(G)}x_I=\mathrm{OPT}\le \chi_f(G\cup D)+\frac{\gamma}{2}n$. Now recall that by feasibility for~\textsc{(P-CODEGREE)}, we also have that 
$$\sum_{I\in \mathcal{I}(G): v\in I}x_I\ge 1$$ for every $v\in V(G)$ as well as
$$\sum_{I\in\mathcal{I}(G):\{u,v\}\subseteq I}x_I\le \mu$$ for every $uv\in \overline{E}$. We claim that we may further assume w.l.o.g. that $\sum_{I\in \mathcal{I}(G): v\in I}x_I= 1$ for every $v\in V(G)$. Indeed, suppose the latter sum equals $1+\varepsilon$ for some $\varepsilon>0$. Then we can find a list of independent sets $J_1,\ldots,J_k$, each containing $v$, and some $\varepsilon_1,\ldots,\varepsilon_k\ge 0$ such that $\sum_{i=1}^k\varepsilon_i=\varepsilon$ and $x_{J_i}\ge \varepsilon_i$ for every $i$. We may then decrease, for every independent set $J_i$, the variable $x_{J_i}$ by $\varepsilon_i$ and increase $x_{J_i\setminus \{v\}}$ by $\varepsilon_i$. This puts the value of the sum $\sum_{I\in \mathcal{I}(G): v\in I}x_I$ down to exactly $1$ while preserving the total sum of all $x_I$ as well as the values of all other sums $\sum_{I\in \mathcal{I}(G): u\in I}x_I$ for all $u\neq v$. Moreover, note that none of the sums $\sum_{I\in\mathcal{I}(G): \{u,v\}\subseteq I}$ for $uv\in\overline{E}$ increase by this modification. Repeating this process for all vertices $v$ in some linear order makes all constraints be satisfied tightly with the sum equal to $1$ while not changing the total sum, as desired. 

In the following, for convenience, let us set $r:=m+2$. We now build an edge--weighted $r$-uniform hypergraph $\mathcal{H}$ as follows. Let $s:=\lceil r\cdot \mathrm{OPT}-n\rceil$, and let us add a set $S$ consisting of exactly $s$ fresh ``dummy'' vertices, disjoint from $V(G)$. So $V(\mathcal{H})=V(G)\cup S$. Then, for every $I\in\mathcal{I}(G)$, we include all the sets $I\cup T$ where $T$ is any subset of $S$ of size exactly $r-|I|$ as hyperedges to $\mathcal{H}$. To check that this is well-defined, we need to ensure that $s\ge r$. However, this is indeed the case, since
$$m\cdot \mathrm{OPT}=m\sum_{I\in \mathcal{I}(G)}x_I\ge \sum_{I\in\mathcal{I}(G)}|I|x_I= \sum_{I\in\mathcal{I}(G)}\sum_{v\in I}x_I=\sum_{v\in V(G)}\sum_{I\in \mathcal{I}(G):v\in I}x_I=n,$$ and hence $s\ge m\cdot \mathrm{OPT}-n+2\cdot\mathrm{OPT}\ge 2\cdot\mathrm{OPT} \ge 2\frac{n}{m}\ge m+2=r$, since our assumption about $n$ clearly implies that $n\ge \frac{m(m+2)}{2}$. This finishes the description of the hypergraph $\mathcal{H}$.

We next define a non-negative real weighting $(x_e')_{e\in E(\mathcal H)}$ of the hyperedges of $\mathcal H$. Intuitively, we simply divide the weight $x_I$ of every original independent set $I$ coming from the optimal solution of the program~\textsc{(P-CODEGREE)} for $G$ equally among all of its extensions $I\cup T$ with $T\in\binom{S}{r-|I|}$ to hyperedges in $\mathcal{H}$. Concretely, given some hyperedge $e=I\cup T$ of $\mathcal H$, where $I\in \mathcal{I}(G)$ and $T\subseteq S$ has size exactly $r-|I|$, we define
$$x_e':=\frac{1}{\binom{s}{r-|I|}}x_I.$$
It is clear from this definition that the total weight of this weighting of $\mathcal{H}$ then equals
$$\sum_{e\in E(\mathcal{H})}x_e'=\sum_{I\in\mathcal{I}(G)}x_I=\mathrm{OPT}.$$

Next, let us consider the (weighted) degrees in $\mathcal{H}$. In the following, with the goal of applying Theorem~\ref{thm:pippenger} in mind, we set $\xi:=\frac{\gamma}{3r}$, noting that $\mu=\left(\frac{\gamma}{300(m+2)}\right)^{20m+40}=\left(\frac{\xi}{100}\right)^{20r}$. Our goal will be to establish that all weighted degrees lie in $[1-\mu,1]$ and that all weighted codegrees are at most~$\mu$. 
For original vertices $v\in V(G)$, it follows directly from our definition of the weights $x_e'$ that
$$\sum_{e\in E(\mathcal H): v\in e}x_e'=\sum_{I\in\mathcal{I}(G): v\in I}x_I=1.$$
Now let $v\in S$ be some fresh ``dummy'' vertex. We then have
$$\sum_{e\in E(\mathcal{H}):v\in e}x_e'=\sum_{I\in \mathcal{I}(G)}\sum_{T\in\binom{S}{r-|I|}: v\in T}\frac{1}{\binom{s}{r-|I|}}x_I$$
$$=\sum_{I\in\mathcal{I}(G)}\frac{r-|I|}{s}x_I=\frac{r}{s}\mathrm{OPT}-\frac{1}{s}\sum_{I\in\mathcal{I}(G)}|I|x_I$$
$$=\frac{r}{s}\mathrm{OPT}-\frac{n}{s}=\frac{r\cdot\mathrm{OPT}-n}{s}.$$

By definition of $s$, we have $r\cdot\mathrm{OPT}-n\le s<r\mathrm{OPT}-n+1$, implying that the above expression for the weighted degree of $v$ lies in $[1-\frac{1}{s},1]\subseteq [1-\mu,1]$ since $s\ge 2\frac{n}{m}\ge \frac{n}{m}$ as established previously, and since our assumed lower bound on $n$ and $m\ge 2$ easily imply that $\frac{n}{m}\ge \frac{1}{m}\left(\frac{600(m+2)}{\gamma}\right)^{20m+40}\ge \left(\frac{300(m+2)}{\gamma}\right)^{20m+40}=\frac{1}{\mu}$. Hence, the weighted hypergraph $\mathcal{H}$ is indeed approximately regular.

Next, let us give upper bounds on the (weighted) codegrees. There are three potential types of pairs of distinct vertices to consider, namely when both vertices are original vertices, one vertex is original and the other is a dummy, and finally if both vertices are dummies. 

First, suppose that $u$ and $v$ are distinct original vertices (in $V(G)$). We then have, by the definition of our weights $x_e'$, that 
$$\sum_{e\in E(\mathcal H):\{u,v\}\subseteq e}x_e'=\sum_{I\in\mathcal{I}(G): \{u,v\}\subseteq I}x_I\le \mu$$ by the feasibility of $(x_I)_{I\in\mathcal{I}(G)}$ for~\textsc{(P-CODEGREE)}. 

Next, let $v\in V(G)$ be an original vertex and $u\in S$ a dummy vertex. We then obtain, by definition of the $x_e'$ that
$$\sum_{e\in E(\mathcal H):\{u,v\}\subseteq e}x_e'=\sum_{I\in\mathcal{I}(G): v\in I}\frac{\binom{s-1}{r-|I|-1}}{\binom{s}{r-|I|}}x_I$$
$$=\frac{1}{s}\sum_{I\in\mathcal{I}(G): v\in I}(r-|I|)x_I\le \frac{r}{s}\sum_{I\in\mathcal{I}(G): v\in I}x_I$$ $$=\frac{r}{s}\le \frac{m(m+2)}{n}\le m(m+2)\left(\frac{\gamma}{600(m+2)}\right)^{20m+40}\le \left(\frac{\gamma}{300(m+2)}\right)^{20m+40}=\mu,$$ as desired.
Finally, suppose $u,v\in S$ are distinct dummies. Then 
$$\sum_{e\in E(\mathcal H):\{u,v\}\subseteq e}x_e'=\sum_{I\in \mathcal{I}(G)}\frac{\binom{s-2}{r-|I|-2}}{\binom{s}{r-|I|}}x_I$$
$$=\frac{1}{s(s-1)}\sum_{I\in\mathcal{I}(G)}(r-|I|)(r-|I|-1)x_I$$
$$\le \frac{r^2}{s(s-1)}\cdot \mathrm{OPT}\le \frac{2(m+2)^2}{s^2}\cdot \mathrm{OPT}$$ $$\le \frac{2(m+2)^2m^2}{n^2}\cdot n=\frac{2(m+2)^2m^2}{n}$$ $$\le 2(m+2)^2m^2\left(\frac{\gamma}{600(m+2)}\right)^{20m+40}\le \left(\frac{\gamma}{300(m+2)}\right)^{20m+40}=\mu,$$ again the desired bound.

All in all, it follows that $\mathcal{H}$ equipped with $(x_e')_{e\in E(\mathcal H)}$ is an $r$-uniform edge-weighted hypergraph where all weighted degrees lie in $[1-\mu,1]$ and all weighted codegrees are bounded by $\mu$. We may thus apply Theorem~\ref{thm:pippenger} to $\mathcal H$ with parameters $\xi$ and $\mu$ to find that there exists a matching $M$ in $\mathcal H$ covering all but at most $\xi\v(\mathcal H)=\xi(n+s)$ vertices of $\mathcal H$. Let $I_1,\ldots, I_{|M|}$ be an enumeration of all the intersections of the hyperedges in $M$ with the set $V(G)$ of original vertices. Then, by definition of $\mathcal H$, we have that $I_1,\ldots, I_{|M|}$ are disjoint independent sets in $G$ covering all but at most $\xi(n+s)$ vertices of $G$. Giving each independent set one color and assigning each uncovered vertex a unique color, we now obtain that
$$\chi(G)\le |M|+\xi(n+s)\le \frac{\v(\mathcal H)}{r}+\xi(n+s)=\frac{n+s}{r}+\xi(n+s).$$ Noting that $n+s<r\cdot \mathrm{OPT}+1$ and recalling $\mathrm{OPT}\le n$ and $\xi=\frac{\gamma}{3r}$, we find that
$$\chi(G)\le \mathrm{OPT}+\frac{1}{r}+\xi r (\mathrm{OPT}+1)\le \mathrm{OPT}+\frac{1}{r}+\frac{\gamma}{3}(n+1)<\mathrm{OPT}+\frac{\gamma}{2}n,$$ where we used that $n\gg \frac{1}{\gamma}$ in the last step.

Putting this together with our previous estimate that $\mathrm{OPT}\le \chi_f(G\cup D)+\frac{\gamma}{2}n$, we obtained the desired bound
$$\chi(G)\le \chi_f(G\cup D)+\frac{\gamma}{2}n+\frac{\gamma}{2}n=\chi_f(G\cup D)+\gamma n$$ on the chromatic number of $G$. This concludes the proof of the lemma.
\end{proof}

\subsection{Bounding the fractional chromatic number after augmentation}\label{sec:rapha-bounding}

In this section, we prove Lemma~\ref{lem:bounding}. Again, we first give a high-level overview of the idea of the proof before delving into the details of the formal proof. Recall that our goal in the lemma is to prove an upper bound on the fractional chromatic number of a union $G\cup D$ where $G$ satisfies $h(G)<t$ and has small independence number $\alpha(G)\le m$, while $D$ is a graph on the same vertex set that has bounded maximum degree (by some parameter $d$). As previously explained, we want to avoid the naive product estimate since the obtained bound is much too bad for our purpose, and instead want to prove an additive error bound as stated in the lemma: A linear term in $t$ plus another term only depending on $d$ and $m$, growing like $d^{\Theta(m)}$. To do so, we will apply the resolution of Linear Hadwiger's conjecture for fractional coloring due to Reed and Seymour~\cite{RS}, stated previously in Theorem~\ref{thm:RS} and restated for convenience below, not only to the graph $G$ itself, but (crucially!) also to various minors of $G$ obtained by contracting matchings in an induced subgraph.

\begin{theorem}[cf.~\cite{RS}, Theorem~1.3]\label{thm:fractional}
Every graph $F$ satisfies $\chi_f(F)\le 2\cdot h(F)$.
\end{theorem}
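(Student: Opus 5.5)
The plan is to pass to the dual LP~(D) and prove the equivalent weighted statement. Concretely, $\chi_f(F)\le 2h(F)$ holds if and only if, for every non-negative weighting $w\colon V(F)\to\mathbb{R}_{\ge0}$, there is an independent set $I$ with $w(I)\ge w(V(F))/(2h(F))$. So fix $w$ and put $h:=h(F)$. The aim is to exhibit a $K_{h+1}$-minor whenever every independent set has weight less than $w(V(F))/(2h)$.

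The building block I would use is a connected, \emph{bipartite-splittable} piece. Take an induced path, or more generally a BFS-tree-derived connected set $X$ whose vertices are grouped by the parity of their BFS level. For a shortest (hence induced) path, the even-indexed and odd-indexed vertices form two independent sets $A\cup B=X$. Consequently one of them has weight at least $w(X)/2$. This is the source of the factor~$2$: any connected piece that is a union of two independent sets carries an independent set of half its weight.

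The construction then builds a clique-minor model $X_1,\dots,X_j$ greedily. I maintain the invariants that the branch sets are pairwise adjacent and that there is a component $C$ of $F-\bigcup_i X_i$ adjacent to every $X_i$. Inside $C$, fix a vertex and run BFS in $C$ to produce layers $L_0,L_1,\dots$. The next branch set $X_{j+1}$ is an induced path from $L_0$ into the region of $C$ touching all $X_i$. Choose such a path of minimum length, so that it is induced and splits into two independent sets. Then pass to a component $C'$ of $C-N[X_{j+1}]$, or rather of $C-X_{j+1}$, that still sees all $X_1,\dots,X_{j+1}$.

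The quantitative bookkeeping is a potential argument. Each step ``consumes'' a set $N[X_{j+1}]\cap C$. I would show its weight is at most $2\max_I w(I)$: it is covered by the two parity classes of the path together with their neighbourhood layers, which by minimality of the path lie in a bounded number of BFS layers. Within each such layer, a cleverly chosen independent subset, obtained by induction on layers in Gyárfás style, carries the weight. Hence after at most $h$ steps we have removed weight less than $2h\cdot w(V)/(2h)=w(V)$. Therefore some weight, and thus a nonempty $C$, remains, and this yields the branch set $X_{h+1}$ and a $K_{h+1}$-minor, a contradiction.

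The main obstacle is precisely this last estimate. It requires bounding the weight of everything discarded at each step, not just the path itself, by twice the maximum independent-set weight. If the direct BFS argument fails, I would instead run induction on $h$. In that variant, choose $X$ connected with an independent set of weight at least $w(X)/2$ so that its neighbourhood $N(X)$ satisfies $h(F[N(X)])\le h-1$. Then apply the inductive hypothesis to $F[N(X)]$ and to $F-N[X]$, combining the fractional colourings of these parts.
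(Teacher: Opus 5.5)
The paper gives no proof of this statement; it quotes Reed--Seymour as a black box. So your proposal has to stand on its own, and it has a genuine gap. The reduction to the weighted dual is right, and so is the idea that the factor $2$ comes from the two colour classes of a connected bipartite branch set. But the model-building step, which carries the whole proof, breaks down in two places.

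First, the invariant cannot be restored. Suppose some earlier $X_i$ has a unique neighbour $u$ in $C$; nothing in your construction prevents this, e.g.\ $X_1$ a single vertex with one neighbour in $C$. Then every admissible $X_{j+1}\subseteq C$ must contain $u$, and afterwards no component of $C-X_{j+1}$ is adjacent to $X_i$. Passing to $C-N[X_{j+1}]$ is worse, since then $C'$ is not even adjacent to $X_{j+1}$. Moreover, $C$ need not contain any vertex or ``region'' seeing all $X_i$. A union of geodesics joining the sets $N(X_i)\cap C$ need not induce a bipartite graph, so the half-weight independent set in the new branch set is lost.

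Second, the bookkeeping is missing. The discarded components of $C-X_{j+1}$ are never charged. The key estimate $w(N[X_{j+1}]\cap C)\le 2\max_I w(I)$ is asserted, not proved, and is false as a general principle. BFS layers and neighbourhoods are not independent: a layer can contain a clique of $h-1$ vertices of near-maximal weight. The only general control is induction on $h$ applied to the $K_h$-minor-free set $N(X_{j+1})$. That gives $w(N(X_{j+1}))\le 2(h-1)\max_I w(I)$, a loss of order $h\max_I w(I)$ per step, far more than the $2h\max_I w(I)$ you can afford over $h$ steps.

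The fallback does not rescue this. Write $\alpha_w$ for the maximum weight of an independent set. For connected $X$ the bound $h(F[N(X)])\le h-1$ is automatic, but $F-N[X]$ can still have Hadwiger number $h$. The only available combinations are $\alpha_w(F)\ge\alpha_w(N(X))$ and $\alpha_w(F)\ge\alpha_w(X)+\alpha_w(F-N[X])$. With the inductive bounds these give only $w(V)\le(4h-2)\alpha_w(F)$, i.e.\ $\chi_f(F)\le 2(h-1)+\max\{2,2h\}=4h-2$.

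You get $2h$ only if $F-N[X]=\emptyset$, i.e.\ $X$ is a connected dominating set containing an independent set of half its weight. Such sets need not exist. Take a triangle $abc$ with a pendant vertex at each corner, weights $1$ on $a,b,c$ and $\varepsilon<1/4$ on the pendants. Every connected dominating set contains $\{a,b,c\}$, whose weight $3$ exceeds $2\alpha_w(F)=2+4\varepsilon$.

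What your outline needs is a rule for choosing each new branch set so that the \emph{total} weight lost at that step (branch set plus everything discarded) is at most $2\alpha_w(F)$, while every earlier branch set stays attached to the surviving part. Neither route in the proposal provides this.
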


In the proof of Lemma~\ref{lem:bounding}, it turns out to be most convenient to look at things from a dual perspective again: From the formulation of the fractional chromatic number of $G\cup D$ as the optimal value of the dual LP~(D), one can see that $\chi_f(G\cup D)$ equals the smallest real number $x$ such that for every non-negative vertex-weighting $w:V(G)\rightarrow \mathbb{R}_{\ge 0}$, there exists an independent set $I$ in $G\cup D$ of weight at least a $\frac{1}{x}$-fraction of the total weight. Hence, in the proof we will assume that we are given such a weighting and aim to find such a ``heavy'' independent set in $G\cup D$. Note that by Theorem~\ref{thm:fractional}, this would be easy (with $x:=2t$) if we were just looking to find a heavy independent set $I$ in $G$. But there could (and will) be additional edges from $D$ with both endpoints in $I$, making our life hard. In an ideal scenario where the induced subgraph $D[I]$ is sparse, such as a forest, say; we would nevertheless be happy, since we could then move to the heavier half of a bipartition of said forest to find a subset of $I$ which is independent in $G\cup D$ that still keeps a $\frac{1}{4t}$-fraction of the total weight, which would be much to our liking (recall that we aim to prove a bound of the form $4t+d^{\Theta(m)}$ on $\chi_f(G\cup D)$). Note that since $\alpha(G)\le m$, the induced subgraph $D[I]$ would actually be fairly small (of order at most $m$), and so all we would need to do to make this work is to find an independent set $I$ which does not interfere much with the short cycles in $D$.

However, this is obviously wishful thinking, as from blindly applying Theorem~\ref{thm:fractional} we have almost no control where in $D$ the heavy independent set $I$ in $G$ lands, and furthermore, even though the graph $D$ is fairly sparse (has bounded maximum degree), it can still host a lot of short cycles all over the place. 

The first key idea of the proof of Lemma~\ref{lem:bounding} is to observe a significantly weaker sufficient condition for extracting a heavy independent set from a heavy subset $U$ of vertices, which does not require $(G\cup D)[U]$ to avoid cycles entirely but only cycles of a certain type: 

Imagine we can find a heavy subset $U$ of $V(G)=V(D)$ such that no cycle in $(G\cup D)[U]$ uses an edge of $G$, i.e. all the $G$-edges are bridges of the induced subgraph. Suppose further that the edges in $G[U]$ form a matching $N$ which spans $U$. It is then fairly easy to see, based on the component structure of $(G\cup D)[U]-N$, that one can find an independent transversal of the matching $N$, i.e. an independent set $I$ in $G\cup D$ which from each matching edge in $N$ takes exactly one endpoint (this is formalized in Observation~\ref{obs:bridges} below). Then, as long as for every edge in $N$ the weights of the endpoints are quite similar to each other, the independent set $I$ obtained in this way will have a total weight close to half of the total weight of $U$, and hence still be heavy. 

Of course, this still sounds a lot like wishful thinking. Before continuing to discuss how we will actually arrive at a subset $U$ with these magical properties, let us first give the simple statement and proof of the aforementioned observation.

\begin{observation}\label{obs:bridges}
Let $H$ be a graph and let $N$ be a perfect matching of $H$, such that no cycle in $H$ uses an edge of $N$. Then there exists an independent set $I$ in $H$ which includes exactly one endpoint of every matching edge.
\end{observation}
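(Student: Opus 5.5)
The plan is to contract the matching and reduce the problem to 2-coloring a forest. Let $H' := H - N$ be the graph obtained by deleting the matching edges, and let $C_1,\dots,C_k$ be the connected components of $H'$. Form an auxiliary multigraph $A$ whose vertices are these components, and add one edge between $C_i$ and $C_j$ for each matching edge $uv\in N$ with $u\in C_i$, $v\in C_j$.

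The first step is to show that $A$ is a forest with no loops. A loop would come from a matching edge $uv$ with both endpoints in the same component of $H'$. A $u$--$v$ path in $H'$ together with $uv$ would then form a cycle of $H$ using an edge of $N$, which is forbidden. Similarly, suppose $A$ contained a cycle (including a double edge). Follow the cycle of $A$, joining consecutive matching edges inside each component by a path in $H'$. Since the components are disjoint and the cycle visits distinct components, this yields a cycle in $H$ containing an edge of $N$, again a contradiction. This lifting of a cycle of $A$ to a genuine cycle of $H$ is the only point that needs some care. The underlying fact is that an edge lies on no cycle exactly when it is a bridge, so $N$ consists of bridges of $H$. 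Contracting the bridgeless pieces then gives a forest.

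The second step is to 2-color $A$ properly with colors $\{0,1\}$, which is possible since $A$ is a forest. Let $I$ be the union of all components $C_i$ of color $0$. Each edge $uv\in N$ joins differently colored components, so $I$ contains exactly one endpoint of $uv$.

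It remains to check that $I$ is independent, and this step fails as stated. Take two adjacent vertices $x,y\in I$. Their edge cannot be a matching edge, since matching edges join opposite colors. But an edge of $H'$ can join two vertices of the same component, both of which lie in $I$.

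So the choice must be made vertex by vertex rather than component by component. The repaired plan is to 2-color the forest $A$ and then, within each component, choose the matching endpoints compatibly. The key observation is that every vertex of $H$ is covered by $N$, since $N$ is perfect. The intended independent set is a transversal of $N$, so it has to avoid $H'$-edges as well. Let $F$ be the graph on $V(H)$ formed by the edges of $N$ together with the edges of $H'$, which is just $H$ itself. We want a transversal $I$ of $N$ that is independent in $H$. Equivalently, we want a proper 2-coloring of $H$ in which every edge of $N$ is bichromatic; take $I$ to be one color class, which then contains exactly one endpoint of each matching edge. Such a coloring exists if $H$ is bipartite, but $H$ need not be bipartite, for instance when $H'$ contains a triangle.

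So color classes do not work directly, and I would instead build $I$ greedily, processing the forest $A$ from a root. Inside a component $C$, it is not enough to take an independent set of $H'[C]$ containing one endpoint of each matching edge that meets $C$. Each matching edge has both endpoints in different components, so within $C$ we are choosing which vertices of $C$ to include: for each matching edge $uv$ with $u\in C$, either $u$ is in $I$ or $v$ is, and the constraint is independence within $C$. Since $A$ is a tree, choices propagate consistently. The remaining difficulty, which I expect to be the main obstacle, is to guarantee that inside each component the complementary choices can be made without creating an edge of $H'$ inside $I$. My first attempt will be to use the fact that each vertex of $C$ has its own matching partner outside $C$, so excluding all of $C$ is always allowed: include none of $C$, and include all partners. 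This suggests rooting $A$ and alternating "take nothing from $C$" with "take the single forced vertices." Working out whether this alternation always avoids conflicts is the crux of the argument.
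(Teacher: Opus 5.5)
\textbf{Genuine gap.} Your first step is the same as the paper's. Contracting the components of $H-N$ gives a loopless forest $A$ without parallel edges whose edges are exactly the edges of $N$. You also correctly see why taking a whole colour class of $A$ fails. But the proposal stops at the point that carries the content of the observation: you leave the choice of endpoints inside each component open. The depth-parity alternation you suggest does not close it. If the parent and all children of a component $C$ ``take nothing'', then every vertex of $C$ is forced into $I$, because every vertex of $C$ is matched into a neighbouring component. So the alternation reproduces exactly the colour-class construction you already rejected. For instance, take the path $x'xyy'$ with $N=\{x'x,yy'\}$. Then $A$ is a path on $\{x'\}$, $\{x,y\}$, $\{y'\}$. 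Rooting at $\{x'\}$ and alternating forces both $x$ and $y$ into $I$, although they are adjacent.

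The missing idea is to let each component of $H-N$ contribute at most one vertex.
\begin{itemize}
\item Root every tree of $A$. Since $A$ has no parallel edges, each non-root component $C$ is joined to its parent by a unique edge $e_C\in N$.
\item Let $I$ consist of the endpoint of $e_C$ lying in $C$, over all non-root $C$, and take nothing from the roots.
\item Every edge of $N$ equals $e_C$ for exactly one $C$, namely its lower end in the rooted forest. Its other endpoint is not chosen, because distinct matching edges are disjoint. So $I$ contains exactly one endpoint of every edge of $N$.
\item Since $|I\cap V(C)|\le 1$ for every component $C$ of $H-N$, no edge of $H-N$ has both ends in $I$. Hence $I$ is independent in $H$.
\end{itemize}
In the example above this gives $I=\{x,y'\}$. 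This is precisely the paper's proof, phrased there as orienting the forest towards its roots so that every vertex has out-degree at most $1$, and letting $I$ be the set of tails of the matching edges.
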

\begin{proof}
Let $H'$ be the multi-graph obtained from $H$ by contracting all edges in $E(H)\setminus N$ (here, exceptionally, if this gave rise to loops or multiple edges, we would keep them, in particular, all edges in $N$ are retained as edges in $H'$). 

However, by our assumption, $H'$ must in fact be a simple graph and a forest, since any loop, pair of multi-edges or cycle in $H'$ would give rise to a cycle in $H$ including one or several edges of $N$. 

Now, fix an orientation of the forest $H'$ of maximum out-degree at most $1$ (e.g., by selecting a root for every component and directing the edges in each component towards its root). Note that since the edges of $H'$ correspond exactly to the edges in the perfect matching $N$, this also induces an orientation of the matching edges in $N$. Let $I\subseteq V(H)$ be the set of starting points of the matching edges in $N$ under this orientation. Then $I$ is independent in $H$: If two distinct vertices $u$ and $v$ of $I$ would be adjacent in $H$ (necessarily via an edge in $E(H)\setminus N$), then the two distinct edges of $N$ starting in $u$ and $v$ would correspond in $H'$ to two distinct outgoing edges of the same vertex obtained by contracting the edge $uv$, contradicting the fact that we chose an orientation of $H'$ of maximum out-degree at most $1$. Since $I$ has the required properties, we may conclude the proof.
\end{proof}

Let us now discuss how we can hope to find a heavy subset $U$ with the desired properties, which we restate here in a qualitative manner for convenience:

\begin{itemize}
    \item $G[U]$ is a matching $N$ that spans all vertices in $U$,
    \item $U$ takes a decent fraction of the total weight of our initial vertex-weighting of $G\cup D$,
    \item For a typical matching edge $uv\in N$, the weights of $u$ and $v$ are not too dissimilar, and
    \item No cycle in $(G\cup D)[U]$ uses an edge in $G$ (i.e., of $N$).
\end{itemize}

Let us forget about the third and fourth conditions for now and let us first see how to get a heavy subset $U$ such that $G[U]$ is a matching that spans $U$. A natural idea to find this heavy induced matching is as follows: Find a much bigger matching $M$ in $G$ whose endpoints ideally take up most of the total vertex-weight in $G$, i.e. such that the weight of uncovered vertices is not too large. Then, instead of applying Theorem~\ref{thm:fractional} to all of $G$ to extract a heavy independent set, apply it to the \emph{minor} $G':=G[V(M)]/M$ of $G$ obtained from the subgraph induced by the endpoints of $M$ by contracting the matching, and equip it with the natural associated weighting, where the weight of a contracted matching edge corresponds to the sum of weights of its endpoints, say. 

Since $h(G')\le h(G)<t$, Theorem~\ref{thm:fractional} then guarantees a heavy independent set in $G'$ taking at least a $\frac{1}{2t}$-fraction of the total weight, which upon uncontracting yields a submatching $N$ of $M$ which is \emph{induced in $G$} and takes at least a $\frac{1}{2t}$ fraction of the weight of the whole matching $M$, which we assumed to be not much smaller than the weight of the whole initial weighting of $G\cup D$. Hence, the set $U$ of endpoints of $N$ also takes a decent fraction of the total weight, and $G[U]$ indeed is a matching that spans $U$. Additionally note that the set $U$ obtained in this way will be fairly small: taking one endpoint of every edge in $N$ yields an independent set in $G$, such that we must have $|N|\le \alpha(G)\le m$ and thus $|U|\le 2m$. In particular, all cycles in $(G\cup D)[U]$ will have length at most~$2m$.

Summarizing, we now have a suitable mechanism for satisfying the first two points. This mechanism should make it plausible that we can meet the requirements of all four points above provided we can find a matching $M$ in $G$ such that:
\begin{itemize}
    \item $M$ does not leave a lot of vertex-weight from the given weighting of $G\cup D$ uncovered (will ensure the first two bullets above),
    \item the sum over all edges $uv$ in $M$ of the absolute difference between the weight of $u$ and the weight of $v$ is not too large (responsible for the third bullet above), and
    \item no cycle of length at most $2m$ in $M\cup D$ uses a matching edge from $M$ (will ensure the fourth bullet above).
\end{itemize}

The fact that it is indeed always possible to find a matching with these properties under our hypotheses is quite surprising and relies on a simple greedy algorithm with a clever analysis, which makes crucial use of the bounded degree assumption on $D$ to control the number of vertices reachable from a fixed vertex along short paths in $D$, which is relevant to avoid short cycles in $M\cup D$ passing through matching edges while building the matching $M$. This statement can be cleanly isolated by the following auxiliary result.

\begin{lemma}\label{lem:magicalmatching}
Let $m, d\in \mathbb{R}_{\ge 1}$, $r\in\mathbb{N}$ and let $G$ and $D$ be graphs on the same vertex-set such that $\alpha(G)\le m$ and $\Delta(D)\le d$. Then for every vertex-weighting $w:V(G)=V(D)\rightarrow [0,1]$ there exists a matching $M$ in $G$ with $M\cap E(D)=\emptyset$ such that no cycle of length at most $r+1$ in $M\cup D$ uses a matching edge of $M$, and furthermore, denoting by $R$ the set of vertices in $G$ that are not covered by $M$, we have
$$\sum_{v\in R}w(v)+\sum_{uv\in M}|w(u)-w(v)|\le m\left(1+\sum_{j=0}^r (d+1)^j\right).$$
\end{lemma}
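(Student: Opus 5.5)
We build $M$ greedily, processing vertices in order of decreasing weight, and bound the final cost by charging it to a small number of independent sets of $G$.

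\textbf{Setup and greedy rule.} Sort $V(G)$ as $v_1,\dots,v_n$ so that $w(v_1)\ge w(v_2)\ge\cdots\ge w(v_n)$. Build $M$ by scanning the vertices in this order. When $v_i$ is reached and is still unmatched, call an unmatched vertex $v_j$ with $j>i$ \emph{admissible} for $v_i$ if three conditions hold:
\begin{enumerate}[label=(\roman*)]
\item $v_iv_j\in E(G)\setminus E(D)$;
\item there is no path of length at most $r$ from $v_i$ to $v_j$ in the current graph $M\cup D$;
\item adding $v_iv_j$ creates no cycle of length at most $r+1$ through an existing matching edge.
\end{enumerate}
Match $v_i$ to the admissible vertex with the smallest index $j$, i.e. the heaviest one. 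If there is none, $v_i$ stays unmatched.

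Conditions (ii)–(iii) keep the invariant that no cycle of length at most $r+1$ in $M\cup D$ uses a matching edge. A short cycle created by the new edge would have to use the new edge together with a path of length at most $r$ in the old graph, and (ii) forbids this. Adding a matching edge can only create cycles through that edge, so (iii) is essentially implied by (ii).

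\textbf{Bounding the cost.} The key quantity is the set of vertices \emph{blocked} for $v_i$. These are the vertices reachable from $v_i$ within distance $r$ in $M\cup D$, together with the $D$-neighbours of $v_i$.

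Since $M$ is a matching, every path in $M\cup D$ alternates between $M$-edges and $D$-edges. So the ball of radius $r$ has size at most $\sum_{j=0}^r(d+1)^j$: each step either follows a $D$-edge ($d$ choices) or the unique matching edge, giving at most $d+1$ options per step. This is exactly the count appearing in the statement.

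Now charge the two kinds of cost.

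\emph{Unmatched vertices.} When $v_i$ stays unmatched, all of its later unmatched $G$-neighbours are blocked. So the unmatched vertices that are non-neighbours of $v_i$ and not blocked carry no obstruction. We use this to cover $R$ by independent sets as follows. Define the auxiliary relation "$v_j$ is blocked for $v_i$". Consider $R$ together with its $G$-edges; any $G$-edge $v_iv_j$ inside $R$ (with $i<j$) must have $v_j$ blocked for $v_i$, and the number of vertices blocked for a given vertex is at most $B:=\sum_{j=0}^r(d+1)^j$. Hence $G[R]$ has an ordering in which each vertex has at most $B$ later neighbours. Greedily colouring $R$ in reverse order then splits $R$ into at most $B+1$ independent sets of $G$, each of size at most $m$. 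Since the weights lie in $[0,1]$, this gives
\[
\sum_{v\in R}w(v)\le m(B+1).
\]

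\emph{Matched pairs.} The matching term $\sum_{uv\in M}|w(u)-w(v)|$ also has to fit in this bound, so we split the budget. I would refine the argument by telescoping. When $v_i$ is matched to $v_j$, the vertices $v_k$ with $i<k<j$ that are $G$-neighbours of $v_i$ were all blocked (or already matched), and the difference $w(v_i)-w(v_j)$ is what we must pay.

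The cleanest route I see is a combined charging over the weight levels. For each threshold $\theta\in[0,1]$, count the "cost at level $\theta$". This is the number of unmatched vertices of weight above $\theta$, plus the number of matched pairs with $w(v_j)\le\theta<w(v_i)$. Integrating over $\theta$ gives the left-hand side exactly.

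It then suffices to show that at each level $\theta$, the set consisting of these unmatched vertices and the top endpoints $v_i$ of straddling pairs spans a subgraph of $G$ with an ordering of back-degree at most $B$, so it splits into at most $B+1$ independent sets and has size at most $m(B+1)$. Integrating over $\theta\in[0,1]$ then yields the bound.

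\textbf{Main obstacle.} The main obstacle is verifying this per-level degeneracy claim. Take two such top vertices $v_i,v_k$ with $i<k$ that are adjacent in $G$. At time $i$, vertex $v_k$ had weight above $\theta\ge w(v_j)$, so the greedy would have preferred $v_k$ unless $v_k$ was blocked or already matched. The case where $v_k$ was already matched at time $i$ must be handled separately. I expect this case to be excluded, because $v_k$ was matched later as the top endpoint of its own pair, but this needs care, and blocking must be measured with respect to a ball at the time of $v_i$'s decision. That fiddly bookkeeping is where I expect the real work to be.
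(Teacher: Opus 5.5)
Your proposal is essentially the paper's proof. The greedy rule is the same: the paper repeatedly takes the heaviest unprocessed vertex $v$ and matches it to the heaviest unprocessed $G$-neighbour outside $B_r^{M\cup D}(v)$, which is your scan with conditions (i)--(iii). The threshold/layer-cake identity for the left-hand side is the same. So is the final bound $|X_\theta|\le \alpha(G)\,\chi(G[X_\theta])\le m\bigl(1+\sum_{j=0}^r(d+1)^j\bigr)$ via degeneracy in processing order. Here $X_\theta$ consists of the unmatched vertices of weight $>\theta$ together with the heavier endpoints of matching edges straddling $\theta$, and your separate bound on $\sum_{v\in R}w(v)$ becomes superfluous.

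The step you flag as the main obstacle closes exactly as you predicted. A vertex matched as the partner of an earlier-scanned vertex has weight at most that vertex's weight, so it is never the strictly heavier endpoint. Hence every vertex of $X_\theta$ is still unmatched when any earlier vertex of $X_\theta$ is scanned. Now suppose $v_k\in X_\theta$ with $k>i$ were a $G$-neighbour of $v_i\in X_\theta$ lying outside the current $r$-ball of $v_i$. Then $v_i$ would be matched to some $v_j$ with $w(v_j)\ge w(v_k)>\theta$, contradicting either $v_i\in R$ or $w(v_j)\le\theta$. So later neighbours lie in the ball at time $i$, which has at most $\sum_{j=0}^r(d+1)^j$ vertices because $\Delta(M\cup D)\le d+1$ throughout. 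The paper equivalently uses the final ball, which contains it.
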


Before we go into the details of the proof of this important lemma, let us first summarize our discussion so far by demonstrating formally that Lemma~\ref{lem:magicalmatching} is sufficient to establish our second key lemma, Lemma~\ref{lem:bounding}.

\begin{proof}[Proof of Lemma~\ref{lem:bounding}, assuming Lemma~\ref{lem:magicalmatching}]
Let $m,d,t\in \mathbb{R}_{\ge 1}$, let $G$ be a graph with $h(G)<t$ and $\alpha(G)\le m$ and $D$ a graph on the same vertex-set as $G$ such that $\Delta(D)\le d$. Our goal is to prove that $\chi_f(G\cup D)\le 4t+(d+1)^{4m}$. Observe that possibly after replacing $m$ by $\lfloor m\rfloor$ and $d$ by $\lfloor d\rfloor$ in the following, we may w.l.o.g. assume that $m,d\in\mathbb{N}$. 

Let $(y_v)_{v\in V(G)}$ be an optimal solution of the dual LP~(D) for the fractional chromatic number of $G\cup D$. We thus have that the $y_v$ are non-negative, as well as $\sum_{v\in V(G)}y_v=\chi_f(G\cup D)$ and $\sum_{v\in I}y_v\le 1$ for every $I\in \mathcal{I}(G\cup D)$. In particular, $y_v\le 1$ for every $v\in V(G)$ by applying the latter inequality with $I=\{v\}$. 

Let $w:V(G)=V(D)\rightarrow [0,1]$ be the weighting defined by $w(v):=y_v$ for every $v\in V(G)$. Then the total weight of $w$ equals $\chi_f(G\cup D)$. Now, let us apply Lemma~\ref{lem:magicalmatching} to the graphs $G$, $D$ with parameters $m$, $d$ and $r:=2m-1$ and the vertex-weighting $w$. The lemma thus yields a matching $M$ in $G$, edge-disjoint from $D$, such that no cycle of length at most $r+1=2m$ in $M\cup D$ uses an edge in $M$, and such that
$$\sum_{v\in R}w(v)+\sum_{uv\in M}|w(u)-w(v)|\le m\left(1+\sum_{j=0}^{2m-1} (d+1)^j\right)<m(1+(d+1)^{2m}),$$
where $R$ denotes the set of vertices in $V(G)=V(D)$ not covered by $M$.

Let now $G':=(G-R)/M$ be the minor of $G$ obtained by first deleting $R$ and then contracting the edges in $M$. Clearly, the vertices in $G'$ are in one-to-one correspondence with the edges of $M$. For a given edge $m\in M$, let us denote by $\mathrm{ctr}(m)$ the corresponding vertex of $G'$. 

Let us now define a non-negative weighting $w':V(G')\rightarrow \mathbb{R}_{\ge 0}$ of $G'$ by setting $w'(\mathrm{ctr}(uv)):=\min\{w(u),w(v)\}$ for every $uv\in M$.\footnote{The alert reader may notice that this definition for a weight function for $G'$ is slightly different from the weight function where we take the sum of the weights of the endpoints of a matching edge, which we previously indicated in the high-level overview. It turns out that this is the better and more convenient choice for the ensuing formal argument, but for the proof sketch we felt it more appropriate to stick with the sum-weighting to convey the essential intuition--observe that by our choice of $M$, the difference $|w(u)-w(v)|$ will be typically quite small, making $w(u)+w(v)$ typically very similar to $2\min\{w(u),w(v)\}$, and the constant factor of two is irrelevant here since the following arguments are scale-invariant.}
Since $G'$ is a minor of $G$, we have $h(G')\le h(G)<t$, and hence $\chi_f(G')\le 2t$ by Theorem~\ref{thm:fractional}. It follows (as explained before, and can be verified using the dual LP~(D)) that there exists an independent set $J$ in $G'$ such that $$\sum_{x\in J}w'(x)\ge\frac{1}{\chi_f(G')}\sum_{x\in V(G')}w'(x)\ge \frac{1}{2t}\sum_{uv\in M}\min\{w(u),w(v)\}$$
$$=\frac{1}{2t}\sum_{uv\in M}\frac{w(u)+w(v)-|w(u)-w(v)|}{2}$$
$$=\frac{1}{4t}\left(\sum_{x\in V(G)\setminus R}w(x)-\sum_{uv\in M}|w(u)-w(v)|\right)$$
$$=\frac{1}{4t}\left(\chi_f(G\cup D)-\left(\sum_{v\in R}w(v)+\sum_{uv\in M}|w(u)-w(v)|\right)\right)$$
$$\ge \frac{1}{4t}\left(\chi_f(G\cup D)-m(1+(d+1)^{2m})\right).$$
Let now $N:=\{m\in M|\mathrm{ctr}(m)\in J\}$ be the submatching of $M$ corresponding to $J$, and let $U$ denote the set of vertices covered by $N$. Then the induced subgraph $G[U]$ consists exactly of the spanning matching $N$ and no further edges. In particular, selecting any one endpoint per edge yields an independent set in $G$, implying that $|N|\le \alpha(G)\le m$ and thus $|U|=2|N|\le 2m$. Consequently, every cycle in $H:=(G\cup D)[U]$ is a cycle of length at most $2m$ in $M\cup D$, which by the guarantee about $M$ coming from our application of Lemma~\ref{lem:magicalmatching} cannot use any edge in $M$ (and thus no edge in $N$). We have thus established that $N$ is a perfect matching in the graph $H$ such that no cycle in $H$ uses an edge from the matching. By Observation~\ref{obs:bridges}, it now follows that there exists an independent set $I$ in $H$ (and thus in $G\cup D$) which consists of exactly one endpoint from every matching edge in $N$. In particular, it follows that
$$1\ge \sum_{v\in I}y_v=\sum_{v\in I}w(v)\ge \sum_{uv\in N}\min\{w(u),w(v)\}$$
$$=\sum_{uv\in N}w'(\mathrm{ctr}(uv))=\sum_{x\in J}w'(x)$$
$$\ge \frac{1}{4t}\left(\chi_f(G\cup D)-m(1+(d+1)^{2m})\right).$$
Rearranging now yields that
$$\chi_f(G\cup D)\le 4t+m(1+(d+1)^{2m})\le 4t+(d+1)^{4m},$$ where the last inequality is easily checked to hold for every $d,m\ge 1$. This establishes the claim of Lemma~\ref{lem:bounding} and concludes its proof.
\end{proof}

Finally, we supply the proof of Lemma~\ref{lem:magicalmatching}, which is the last missing piece towards completing the proof of Lemma~\ref{lem:bounding}. As previously mentioned, the matching $M$ as in the lemma is built using a very simple greedy algorithm, however, the analysis proving that it has the properties required by the lemma is clever and elegant. On a high level, the algorithm processes vertices in decreasing order of their weight and tries to match a given vertex to as heavy a neighbor of smaller weight as possible, while not creating the short cycles forbidden by the lemma. Intuitively, we hope to ensure that by matching vertices to as heavy neighbors below their own weight as possible we obtain a matching where typically, matched pairs have similar weight. In general, this can go very wrong, however in our case it is plausible, since

\begin{enumerate} 
\item[(a)] the assumption that $G$ has small independence number gives typical vertices a lot of flexibility in terms of potential matching partners to choose from, and
\item[(b)] the bounded-degree assumption on the graph $D$ limits the effect of the short cycle-side constraint on the process. 
\end{enumerate}

\begin{proof}[Proof of Lemma~\ref{lem:magicalmatching}]
Let parameters $m,d,r$, graphs $G,D$ and a weighting $w:V(G)=V(D)\rightarrow [0,1]$ be given satisfying the properties stated in the lemma. We build the matching $M$ using a simple greedy algorithm which processes vertices one by one and decides for each of them where to match them (or whether to leave them uncovered), as described by the pseudocode below.

During the algorithm, we keep track of a matching $M$ in the graph $G$ (which is initially empty and at the end of the algorithm will be the desired matching), a subset $P$ of $V(G)=V(D)$ consisting of \emph{processed} vertices, and its complement, the set $U:=V(G)\setminus P=V(D)\setminus P$ of currently \emph{unprocessed} vertices. The algorithm starts from an empty set of processed vertices and finishes when all vertices have been processed. We maintain the invariant that the current matching $M$ is contained in the subgraph $G[P]$ induced by the processed vertices, and that at every time no cycle of length at most $r+1$ in the graph $M\cup D$ defined with the edges in the current matching $M$ and $D$ uses any edge of $M$. We also keep track of the set $R:=P\setminus V(M)$ of processed vertices left uncovered by $M$. In the following, we abbreviate $V:=V(G)=V(D)$, and for a vertex $v\in V$ and a graph $H$ on vertex-set $V$ we will denote by $B_r^H(v)$ the ball of radius $r$ around $v$ in $H$, i.e. the set of all vertices in $H$ at distance at most $r$ from $v$.

\newpage
\begingroup
\captionof{algorithm}{\textbf{Weight-balanced matching}}
\label{alg:magicmatching}
\begin{algorithmic}[1]
\State \textbf{initialize} $M\gets \emptyset$, $P\gets \emptyset$, $U\gets V$, $R\gets \emptyset$. 
\While{$U\neq \emptyset$}
\State $H\gets M\cup D$
\State Pick $v$ in $\mathrm{argmax}\{w(x)|x\in U\}$
\If{$(N_G(v)\cap U)\setminus B_r^H(v)=\emptyset$}
\State $R\gets R\cup \{v\}$,  $P\gets P\cup \{v\}$, $U\gets U\setminus \{v\}$
\Else
\State Pick $u$ in $\mathrm{argmax}\{w(x)|x\in (N_G(v)\cap U)\setminus B_r^H(v)\}$
\State $M\gets M\cup\{uv\}$,  $P\gets P\cup \{u,v\}$, $U\gets U\setminus \{u,v\}$
\EndIf
\EndWhile

\medskip

\noindent\Return $M$ and $R$
\end{algorithmic}
\endgroup

\medskip

Let us also briefly describe this simple algorithm in words: While there still is a vertex left to process, we pick a maximum-weight unprocessed vertex $v$, which we seek to match with another unprocessed vertex $u$ of high weight. However, it may be the case that all the neighbors of $v$ in $G$ are already processed or that all of them are very close to $v$ in the current graph $H=M\cup D$ consisting of the already built matching edges and $D$ (formally, $(N_G(v)\cap U)\setminus B_r^H(v)=\emptyset$). In this case we wish to avoid matching $v$ at all, since matching $v$ to a neighbor $u$ very close to it in $D\cup M$ would either use an edge already in $D$ or create a short cycle (of length at most $r+1$) through the new matching edge $uv$, both of which we cannot allow if we want to meet the requirements of the lemma. Hence, in this case we simply declare $v$ as processed and move it to the unmatched set $R$, and continue.

However, if this is not the case, i.e., if $(N_G(v)\cap U)\setminus B_r^H(v)\neq\emptyset$ and there are still ``safe'' unprocessed neighbors $u$ of $v$ in $G$, then we pick among them some vertex $u$ of maximum weight and match $v$ to $u$, declare $u$ and $v$ as processed and move on. 

We repeat this process until all vertices have been processed, which clearly happens after finitely many steps since at each step we process one or two new vertices. It should be clear from the procedure that the final returned $M$ is a matching in $G$ and that $R$ is exactly the set of vertices in $G$ left uncovered by $M$. Furthermore, our choice of the neighbor $u$ to match $v$ to ensures that at any point in the process, when we add a new matching edge $uv$ to the current matching $M$ and update $M$ to $M':=M\cup\{uv\}$, then $u$ and $v$ had distance at least $r+1$ in the graph $M\cup D$, and hence no new cycle of length at most $r+1$ through the new edge $uv$ can be created in the new graph $M'\cup D$. 

Since $M$ is initially empty, this immediately shows that at every stage of the process no cycle of length at most $r+1$ in $M\cup D$ uses an edge of $M$, and hence this is also the case for the final returned matching $M$. 

Hence, we already established that the output $M$ of the above algorithm is a matching in $G$ satisfying the first condition required by the lemma. To conclude the proof, it thus remains to show that $M$ and $R$ satisfy the inequality
$$\sum_{v\in R}w(v)+\sum_{uv\in M}|w(u)-w(v)|\le m\left(1+\sum_{j=0}^r (d+1)^j\right)$$ as claimed in the lemma.

The crucial step to show this is to reinterpret the left hand side of the inequality above as the expectation of a certain random variable. For this, imagine we randomly and uniformly draw a real threshold $t\in [0,1]$ and define a subset $X_t$ of $V$ by including in $X_t$ all unmatched vertices $v\in R$ with weight above the threshold, i.e., $w(v)\ge t$, as well as all vertices $v$ where $uv\in M$ is a matching edge such that $w(u)<t<w(v)$, i.e. when the random threshold $t$ separates the weights of the endpoints of a matching edge, then we include the heavier of the two endpoints into $X_t$ (and none of the endpoints if they have equal weight). 

We first observe that the left hand side of the desired inequality equals the expected size of the random set $X_t$ defined in this way. Indeed, for every fixed vertex $v\in R$, by definition the probability it is included in $X_t$ is exactly the probability that $t$ lies in $[0,w(v)]$, so equal to $w(v)$, and for every vertex $v$ which is an endpoint of a matching edge $uv\in M$ with $w(v)>w(u)$, the probability it lies in $X_t$ is exactly the length of the open interval $(w(u),w(v))$, and so equals  $w(v)-w(u)=|w(u)-w(v)|$. Finally, if $u,v$ are endpoints of a matching edge $uv\in M$ with $w(u)=w(v)$, then none of them is included into $X_t$, so the inclusion happens with probability $0=|w(u)-w(v)|$.

It follows from summing these probabilities (using linearity of expectation), that we indeed have
$$\mathbb{E}|X_t|=\sum_{v\in R}w(v)+\sum_{uv\in M}|w(u)-w(v)|.$$
With this reformulation of the left hand side of the desired inequality at hand, to conclude the proof of the lemma, it suffices to show that the expected size of the set $X_t$ is at most $m\left(1+\sum_{j=0}^{r}(d+1)^j\right)$. We will do so by showing that actually $|X_t|\le m\left(1+\sum_{j=0}^{r}(d+1)^j\right)$ for \emph{every} choice of $t$, which will certainly also imply the desired bound on the expectation.

So let $t\in [0,1]$ be given arbitrarily, and let us consider the set $X_t$. Our strategy for upper-bounding $|X_t|$ will be to show that the subgraph $G[X_t]$ of $G$ induced by $X_t$, by virtue of our construction of the matching, has small \emph{degeneracy} (here we will crucially use the bounded-degree assumption on $D$ to control ball sizes). We can then conclude that $G[X_t]$ also has small chromatic number, and since each independent set is small in $G$ (as $\alpha(G)\le m$ by assumption), it will also have to have small size, as desired.

To witness this low degeneracy of $G[X_t]$, let us order the vertices in $X_t$ as $x_1,\ldots,x_k$ for some $k\ge 0$, exactly in the order they were processed by our algorithm (i.e., $x_1$ was added to $P$ first, then $x_2$, then $x_3$, and so on).

We now claim that for every $i\in [k]$, all neighbors of $x_i$ in $G$ among later vertices in $X_t$ lie in the ball of radius $r$ around $x_i$ in $M\cup D$. Concretely, 
$$N_G(x_i)\cap \{x_{i+1},\ldots,x_k\}\subseteq B_r^{M\cup D}(x_i).$$
Indeed, suppose towards a contradiction that there exists some $j\in \{i+1,\ldots,k\}$ such that $x_j$ is a neighbor of $x_i$ in $G$ that does not lie in $B_r^{M\cup D}(x_i)$. 

In the following, let us denote by $U$ the set of unprocessed vertices in Algorithm~\ref{alg:magicmatching} at the beginning of the iteration of the while loop in which $x_i$ is processed. We then claim that $x_i$ was the vertex in $\mathrm{argmax}\{w(x)|x\in U\}$ selected in line $4$ of the algorithm.

Indeed, it can be readily seen from how the algorithm processes vertices that the only alternative would be if $x_i$ was the matching partner in $M$ of the vertex $v\in \mathrm{argmax}\{w(x)|x\in U\}$ selected in line 4 of the algorithm. But this would mean that the weight of $x_i$ is at most the weight of its matching partner in $M$, which contradicts the fact that $x_i\in X_t$. Hence, indeed, $x_i$ was a heaviest vertex selected in line 4 of the algorithm in the iteration of the while loop in which it was processed.

Let us now come back to considering $x_j$ and to deriving the desired contradiction.

Let $M'\subseteq M$ denote the matching at the beginning of the iteration of the while loop in which $x_i$ was processed. Then, by definition of our ordering, $x_j$ was still unprocessed at this time, so $x_j\in U$, and since $B_r^{M'\cup D}(x_i)\subseteq B_r^{M\cup D}(x_i)$, we must have had $x_j\in (N_G(x_i)\cap U)\setminus B_r^H(x_i)$ with $H:=M'\cup D$ at this point of the process. In particular, $(N_G(x_i)\cap U)\setminus B_r^H(x_i)\neq \emptyset$, which means that in this iteration of the loop we entered the else-branch. In particular, it follows that $x_i$ was matched to a vertex $u\in \mathrm{argmax}\{w(x)|x\in (N_G(x_i)\cap U)\setminus B_r^H(x_i)\}$ at this point of the process.

This implies that $w(u)\ge w(x_j)$, where $u$ is the unique matching partner of $x_i$ in $M$. However, since $x_i\in X_t$, we have $w(u)<t<w(x_i)$, and since $x_j\in X_t$, we have $w(x_j)\ge t$. This implies $w(x_j)>w(u)$, contrary to what we just stated. This is the desired contradiction, showing that the claimed inclusion
$$N_G(x_i)\cap \{x_{i+1},\ldots,x_k\}\subseteq B_r^{M\cup D}(x_i)$$
indeed holds for every $i\in [k]$. Since $D$ has maximum degree at most $d$ by assumption and since $M$ is a matching, $M\cup D$ has maximum degree at most $d+1$, and so the $r$-ball in the above inclusion can contain at most $\sum_{j=0}^{r}(d+1)^j$ vertices. Hence, the ordering $x_1,\ldots,x_k$ of the vertices of $G[X_t]$ witnesses that $G[X_t]$ is $\sum_{j=0}^{r}(d+1)^j$-degenerate. Since the chromatic number of any graph is bounded above by its degeneracy plus $1$, it follows that
$$\chi(G[X_t])\le 1+\sum_{j=0}^{r}(d+1)^j.$$
Since $\alpha(G[X_t])\le \alpha(G)\le m$ by assumption, it now follows that
$$|X_t|\le \alpha(G[X_t])\chi(G[X_t])\le m\left(1+\sum_{j=0}^{r}(d+1)^j\right).$$
This establishes the desired upper bound on $|X_t|$ for every $t\in [0,1]$, and thus on the expectation of the random variable $|X_t|$. As previously explained, this proves the desired inequality
$$\sum_{v\in R}w(v)+\sum_{uv\in M}|w(u)-w(v)|\le m\left(1+\sum_{j=0}^r (d+1)^j\right),$$
completing the proof of Lemma~\ref{lem:magicalmatching}.
\end{proof}

\section{Proof of the exponent improvement}\label{sec:bootstrap}

In this section we prove Theorem~\ref{thm:bootstrap}. That is we show how to bootstrap a linear bound on the chromatic number for graphs with $\had(G) < t$ expanding the range to which it applies (at the cost of increasing the implicit constant) from   $\v(G) \leq t (\log t)^{\alpha}$ to $\v(G) \leq t (\log t)^{4\alpha/3}$.  Once again we start with a detailed proof overview. 

\subsection{Proof overview}\label{sec:overviewBootstrap}

The most natural way of accomplishing the needed bootstrap is to bound $\chi(G)$ by a linear function of $\chi(G')$ for a minor $G'$ of $G$ such that $\v(G') \leq \frac{\v(G)}{(\log t)^{\alpha/3}}.$ 

Such a linear bound can be guaranteed if every induced  connected subgraph contracted to obtain $G'$ from $G$ has bounded chromatic number. Let us record a simple lemma that makes this observation explicit for later use.

\begin{lemma}\label{lem:contraction} Let $G$ be a graph and let $s \geq 1$ be an integer. Let $(V_1,\ldots,V_m)$ be a partition of $V(G)$ such that $G[V_i]$ is connected and $\chi(G[V_i]) \leq s$ for every $i \in [m]$. Let $G'$ be a minor of $G$ obtained by contracting each $V_i$ to a single vertex. Then $$ \chi(G) \leq s\,\chi(G') .$$
\end{lemma}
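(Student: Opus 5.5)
The plan is a product coloring. Fix a proper coloring $c'\colon V(G')\to[\chi(G')]$ of the contracted graph. For every $i\in[m]$, fix a proper coloring $c_i\colon V_i\to[s]$ of $G[V_i]$; this exists because $\chi(G[V_i])\le s$. Write $x_i$ for the vertex of $G'$ obtained from $V_i$. For $v\in V_i$, define
$$c(v):=\bigl(c'(x_i),\,c_i(v)\bigr)\in[\chi(G')]\times[s].$$
This uses at most $s\,\chi(G')$ colors, so it remains to check that $c$ is proper.

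Take an edge $uv\in E(G)$ with $u\in V_i$ and $v\in V_j$. There are two cases.

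\begin{itemize}
\item If $i=j$, then $uv$ is an edge of $G[V_i]$. Hence $c_i(u)\neq c_i(v)$, and the second coordinates of $c(u)$ and $c(v)$ differ.
\item If $i\neq j$, then contracting each part keeps an edge between $x_i$ and $x_j$ in $G'$, since only loops and duplicate edges are discarded. Hence $c'(x_i)\neq c'(x_j)$, and the first coordinates differ.
\end{itemize}

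In both cases $c(u)\neq c(v)$, so $c$ is proper and $\chi(G)\le s\,\chi(G')$.

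There is no real obstacle here. The only point that needs a moment's care is confirming that edges between distinct parts survive the contraction as edges of $G'$. This follows from the paper's convention for $G/F$. Connectivity of the $G[V_i]$ is needed only so that $G'$ is a minor of $G$; the coloring argument does not use it.
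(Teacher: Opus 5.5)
Your proposal is correct and is essentially the paper's proof. The paper uses the same product coloring $c(v)=(c'(v_i),c''(v))$, with $c''$ proper on each $G[V_i]$, and checks the same two cases (same part versus different parts), phrased contrapositively as ``equal colors imply non-adjacency.''
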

\begin{proof} Let $v_i \in V(G')$ be obtained by contraction of $V_i$. Let $s' = \chi (G')$.
     Let $c': V(G') \to [s']$ be a proper coloring of $G'$ and let $c'': V(G) \to [s]$ be an (improper) $s$-coloring of $G$ which is proper when restricted to $G[V_i]$ for each $i$. Let  $c: V(G) \to [s'] \times [s]$ be defined by $c(v) = (c'(v_i),c''(v))$ for $v \in V_i$. We claim that $c$ is a proper coloring of $G$.

     Indeed, if $c(v) = c(v')$ for a pair of distinct $v,v' \in V(G)$ such that $v \in V_i,v' \in V_j$ then either \begin{itemize} \item $i=j$, in which case $v$ and $v'$ are non-adjacent, as $c''(v)=c''(v')$ and $c''$ properly colors $G[V_i]$, or \item $i \neq j$, in which case $v$ and $v'$ are non-adjacent, as $c'(v_i)= c'(v_j)$ and  so $v_i$ and $v_j$ are non-adjacent in $G'$, implying that no vertex of $V_i$ is adjacent to any vertex of $V_j$.\end{itemize}
\end{proof}

We will only apply Lemma~\ref{lem:contraction} with $s=2$, i.e. we will only contract bipartite subgraphs of $G$.

\vskip 5pt
Returning to the overview, we'd like to find a minor $G'$ of $G$ obtained by contracting induced  connected bipartite subgraphs, so that  $\v(G') \leq \frac{\v(G)}{k}$, where $k = (\log t)^{\alpha/3}$. The upper bound on $\v(G')$ can be naturally ensured by requiring that every vertex of $G'$ corresponds to a subgraph of $G$ of size $k$. Thus it suffices to divide the vertex set of $G$ into subsets inducing such large bipartite subgraphs.

This is not always possible, but we can try to divide as much as we can. Repeatedly extract connected induced bipartite subgraphs of order at least $k$.  By the observations above the chromatic number of the subgraph induced on all of the extracted vertices is at most $2 C_{\alpha}t$. Thus it remains to bound the chromatic number of the subgraph remaining once the extraction has to stop.

Let $b(G)$ be the largest number of vertices in a connected induced bipartite subgraph of $G$. We have reduced our problem to graphs with $b(G) < k$. In particular, we can assume that $G$ has no induced paths or stars on at least $k$ vertices.

\vskip 5pt

At this point, we employ the Gy\'{a}rf\'{a}s path argument. It was first used by Gy\'{a}rf\'{a}s~\cite{Gyarfas} to show that the chromatic number of graphs with no induced $\ell$-vertex path and no $K_k$ subgraph is bounded by a function of $k$ and $\ell$. Since then variations of this argument have been used extensively to study structure and coloring properties of graphs with forbidden induced subgraphs. To the best of our knowledge, the Gy\'{a}rf\'{a}s path argument has not been previously applied to relaxations of Hadwiger's conjecture.

In its standard form, which we use here, the Gy\'{a}rf\'{a}s path argument grows an induced path with vertex set $v_1,v_2,\ldots,v_i$ in a graph $G$ while retaining a component $C_i$ with high chromatic number in a graph $G_i = G -N_G[v_1] - \ldots - N_G[v_{i-1}]$ containing a neighbour of $v_i$. To continue the growing process, we consider the graph $C_i  -  N_G[v_i]$, choose a component $C_{i+1}$ in it with highest chromatic number and set $v_{i+1}$ to be a neighbour of $v_i$ in $C_i$ with a neighbour in $C_{i+1}$. Bounding the length of the path and the decay $\chi(C_i) - \chi(C_{i+1})$ of the chromatic number allows one to bound $\chi(G)$.

The bound $b(G) < k$ bounds the length of any induced path in $G$ by $k$, but the decay of the chromatic number appears harder to bound directly. Here another novel idea enters the proof. We bound the chromatic number of the graph $J = G[N_G[v_1] \cup \ldots \cup N_G[v_{i}]]$ induced by the deleted vertices (a.k.a., the closed neighborhood of the path). By the bound on $b(G)$ it is not hard to see that this graph has fairly small independence number, i.e.  $\alpha (J) \leq k^2$. On the other hand, we have $\alpha(J) \geq \frac{\v(J)}{\chi_f(J)} \geq \frac{\v(J)}{2\had(J)} \geq \frac{\v(J)}{2t}$ by Theorem~\ref{thm:RS}. Thus $\v(J)$ is small enough to apply the linear bound $\mathrm{LH}(\alpha)$ to $J$, and we can assume that the chromatic number of $J$ is small compared to that of $G$. (This is our second and last application of $\mathrm{LH}(\alpha)$.)

If the process of growing the  Gy\'{a}rf\'{a}s path has stalled, then the chromatic number of the corresponding sequence of components has decayed to zero. Which part of $G$ has the large chromatic number then? It has to come from a component of $G  -  V(J)$ that at some point in the process did not get chosen because there existed another component with the chromatic number at least as high. By itself this observation does not seem particularly helpful, but it fits perfectly into the chromatic-separability framework introduced by Delcourt and Postle, which we now describe.  

\vskip 5pt

For $s \geq 0$ we say that a graph $G$ is \emph{$s$-chromatic-separable} if there exist two vertex disjoint subgraphs $H_1,H_2$ of $G$ such that $\chi(H_i) \geq \chi(G) - s$ for $i=1,2$, and we say that $G$ is  \emph{$s$-chromatic-inseparable}, otherwise. Note that every graph $G$ with $\chi(G) \leq s$ is trivially $s$-chromatic-separable. 

The above Gy\'{a}rf\'{a}s path argument can be formalized to imply that $G$ can not have a $(Dt)$-chromatic inseparable induced subgraph for sufficiently large $D$. (See Corollary~\ref{cor:separation} for a stronger statement that we actually need.) We can additionally assume that the chromatic number of $G$ is much much larger than $Dt$. In particular, $G$ can be split into three vertex disjoint subgraphs $J_1,J_2,J_3$ that retain most of its chromatic number. 

An easily verified equivalent condition for the property that a graph $G$ has a complete graph $K_h$ for some integer $h\ge 1$ as a minor is that it contains a collection of $h$ induced connected subgraphs which are pairwise disjoint and pairwise connected by at least one edge in $G$. This is also called a \emph{$K_h$-model} and the vertex-sets of the disjoint connected subgraphs are referred to as \emph{branch-sets}. This view gives rise to the natural idea of trying to combine several smaller, disjoint such complete minor models, obtained from the disjoint high-chromatic subgraphs $J_1, J_2, J_3$, by linking them together with disjoint paths. To exemplify this a bit more concretely, assume now for simplicity that $t$ is divisible by $3$, i.e. $t = 3t'$ for some $t'$ and that further \begin{itemize}
    \item  $h(J_i) \geq 2t'$ for every $i$, which we might hope to be true by induction on $t$, as $\chi(J_i) \approx \chi(G),$ and
    \item we can ``glue'' together the corresponding three $K_{2t'}$ minor models by constructing a family of $t=3t'$ vertex disjoint paths, each joining a branch set in one model to a branch set in another, with each branch set containing exactly one path endpoint and all path interiors disjoint from the three models.
\end{itemize} 
Then contracting each path together with the two branch sets it joins to a single vertex we would obtain a $K_t$ minor in $G$, contradicting the condition $h(G) < t$.

Satisfying the above conditions seems highly non-trivial, and it is. But fortunately for us this is exactly what Delcourt and Postle accomplish in~\cite{DP}. To facilitate the discussion of the differences between their result and the result we need, let us now introduce the main result of~\cite{DP}, which implies Theorem~\ref{thm:DP}, in its precise form.

\begin{theorem}[Delcourt--Postle, {\cite[Theorem~1.6]{DP}}]\label{thm:dp2}
There is an absolute integer $C_{\rm DP}\ge1$ such that, for every integer
$t\ge3$ and every $K_t$-minor-free graph $G$,
\[
 \chi(G)\le C_{\rm DP}\,t(1+f_{\rm DP}(G,t)),
\]
where
\[
 f_{\rm DP}(G,t)=\max\left(\{0\}\cup\left\{\frac{\chi(H)}a:
 \begin{array}{l} a\in\mathbb Z,\quad t/\sqrt{\log t}\le a\le t,\\
 H \text{ is a subgraph of } G, h(H) < a ,\\
\v(H)\le C_{\rm DP}a(\log a)^4
 \end{array}\right\}\right).
\]
\end{theorem}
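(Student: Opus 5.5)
The statement here is Theorem~\ref{thm:dp2}, the main result of Delcourt and Postle, which the paper cites. The plan is to reconstruct its proof along the lines of~\cite{DP}, organised around chromatic separability. We induct on $t$ and take a $K_t$-minor-free $G$ with $\chi(G)$ large compared to $C_{\rm DP}t(1+f_{\rm DP}(G,t))$.

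\textbf{Step 1: reduction to the inseparable case.} First we pass to a suitably connected, minimal high-chromatic subgraph. We then use the following dichotomy. Either $G$ contains an $s$-chromatic-inseparable subgraph $H$ with $\chi(H)$ close to $\chi(G)$ for $s=\Theta(t)$, or $G$ splits repeatedly into disjoint subgraphs $J_1,J_2,J_3$, each of chromatic number at least $\chi(G)-O(t)$.

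\textbf{Step 2: the separable case.} By induction on $t$ (with $t'\approx t/3$ and minor-size parameter $2t'$), each $J_i$ contains a $K_{2t'}$-model, and the goal is to glue the three models into a $K_t$-model. To make the gluing possible we need connectivity between the models. We first use the Kostochka--Thomason density bound together with Mader-type results to obtain a highly connected subgraph of size $O(t\sqrt{\log t})$ or larger. We then route $t$ vertex-disjoint paths between branch sets using Menger's theorem, and reserve some branch sets as rooted "linkage" vertices. This rooted linkage step is the technical heart of~\cite{DP}. The main obstacle is ensuring that the paths avoid the models while using each branch set at most once. We would try to handle this by building rooted minors inside a dense small piece, using a lemma that dense graphs contain rooted $K_a$-minors with prescribed roots.

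\textbf{Step 3: the inseparable case.} Here we use the density increment. A $t$-inseparable graph with large chromatic number and no $K_t$ minor contains a subgraph $H$ of order at most $C_{\rm DP}a(\log a)^4$ with $h(H)<a$. This follows by repeatedly finding small dense subgraphs (via Kostochka--Thomason/Norin--Postle--Song small-minor lemmas) whose deletion, by inseparability, decreases $\chi$ by at most $O(\chi(H))$. Consequently $\chi(G)$ is bounded by $O(t)$ times the largest ratio $\chi(H)/a$ over such small $H$, plus $O(t)$, which is exactly $C_{\rm DP}t(1+f_{\rm DP}(G,t))$.

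\textbf{Step 4: balancing parameters.} Finally we choose the range $t/\sqrt{\log t}\le a\le t$ so that the recursion across scales closes with an absolute constant. The logarithmic losses are absorbed because the scale at which the density becomes at least $a\sqrt{\log a}$ fixes the exponent $4$ on the logarithm. I expect the linkage in Step~2 to be the hardest part.
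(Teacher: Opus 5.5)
You have the right ingredients (chromatic separability, trisection into $J_1,J_2,J_3$, a small highly connected subgraph, Menger, rooted minors), but Step~2 is missing the decisive idea, and the induction you set up cannot supply it. The paper only cites this theorem; Appendix~\ref{app:DPinsep} reproduces Delcourt and Postle's argument essentially verbatim, with the inseparable case absorbed into the definition of $f$, and that is the comparison below.

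\textbf{Step 2.} A hypothesis of the form ``each $J_i$ has a $K_{2t'}$ minor'' gives no control over where the branch sets lie. So nothing lets you route $t$ disjoint paths, one endpoint per branch set, with interiors avoiding three arbitrary models. Your proposed remedy does not help: a small $\Theta(a)$-connected piece is only guaranteed density $\Theta(a)$, below the $a\sqrt{\log a}$ threshold, so it need not contain a $K_a$ minor at all, rooted or not.

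Inducting on $t$ also does not close. The quantity $f_{\rm DP}(J_i,2t/3)$ ranges over $a$ down to $(2t/3)/\sqrt{\log(2t/3)}<t/\sqrt{\log t}$, so it is not controlled by $f_{\rm DP}(G,t)$.

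What the argument does instead (Lemma~\ref{lem:DP-woven}, after~\cite[Theorem~7.1]{DP}) is fix $t$ and induct on $a=(2/3)^it$. The statement proved is stronger and rooted: every $Ca$-connected $G$ with $\chi(G)\ge C(t+276a(1+f))$ is $(a,3a)$-woven. That is, for \emph{any} prescribed roots there is a rooted $K_a$ model together with a disjoint linkage of $3a$ prescribed pairs. The cases are:
\begin{itemize}
\item If $G$ has a $K_{14a}$ minor, wovenness follows from Kawarabayashi's Lemma~\ref{lem:rooted-minor}.
\item If $a\le t/\sqrt{\log t}$, Kostochka's bound (Theorem~\ref{lem:density}) gives $\chi(G)=O(a\sqrt{\log a})=O(t)$ for $K_{14a}$-minor-free $G$. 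This, not the heuristic of your Step~4, is what the lower limit $t/\sqrt{\log t}$ in $f_{\rm DP}$ reflects.
\item Otherwise, a small highly connected $H_0$ (Theorem~\ref{lem:small-connected}) serves purely as a linkage hub, via Menger and Bollob\'as--Thomason. The remainder of a $Ca$-connected high-chromatic $G_3$ is split twice by chromatic separability into $J_1,J_2,J_3$. Induction shows that $Ca$-connected high-chromatic subgraphs $J_i'\subseteq J_i$ are $(2a/3,2a)$-woven. The weaving Lemma~\ref{lem:weaving} then lets the rooted models in the $J_i'$ coexist with a linkage joining two neighbours of each root to roots in the $J_i'$.
\end{itemize}
Carrying the linkage inside the inductive hypothesis is exactly what makes the gluing possible.

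\textbf{Parameters and Step 3.} In this recursion the separation parameter at scale $a$ is proportional to $a(1+f)$ (it is $m=14a(1+f)$ in the appendix), so the per-level losses form a geometric series in $a$. A loss of $\Theta(t)$ at each of the $\Theta(\log\log t)$ levels between $t$ and $t/\sqrt{\log t}$ would already cost $\Theta(t\log\log t)$.

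Your choice $s=\Theta(t)$ is also independent of $f_{\rm DP}$, so the inseparable case cannot be charged to it. The small-subgraph mechanism turns $m$-inseparability into a small subgraph of chromatic number $\Omega(m)$. That contradicts the definition of $f_{\rm DP}$ only when $m$ is a large multiple of $af_{\rm DP}$.

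Step~3 as written is vacuous in any case. Every graph has a subgraph on at most $C_{\rm DP}a(\log a)^4$ vertices with $h<a$ (a single vertex will do). Your justification, that deleting $H$ lowers $\chi$ by at most $O(\chi(H))$, is just subadditivity of $\chi$ and uses inseparability nowhere.

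What is actually needed is a statement of the form: an $m$-chromatic-inseparable $K_{14a}$-minor-free graph with $\chi\ge 2m$ contains a subgraph on $O(a(\log a)^4)$ vertices of chromatic number $\Omega(m)$. Delcourt and Postle handle this by starting from a small highly connected subgraph and growing a minor in it step by step; the size bound ensures the process does not exhaust $G$ too early. Neither this argument nor the woven induction appears in your sketch, so it identifies the obstacles but does not overcome them.
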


The proof of Theorem~\ref{thm:dp2} via a very delicate implementation of the trisection argument described above accomplishes exactly what we need. It shows that if $\chi(G) > C \had(G)$ for some very large $C$, then it contains an $s$-chromatic-inseparable subgraph $H$ with $\chi(H) > C' \had(H)$ with $C'$ smaller than $C$, but still large, and $s$ proportional to $\chi(H)$. 

Note that this is not obvious from the statement. The statement guarantees instead a subgraph $H$ as above with $O(t \log^4 t)$ vertices. The reason behind this size condition is exactly chromatic inseparability. Delcourt and Postle handle the inseparable case by starting with a relatively small highly connected subgraph and growing a minor in it extending the subgraph step by step. The size condition ensures that this process can not exhaust all of $V(G)$ too early.

For us however a highly-chromatic-inseparable subgraph already suffices, and so we recast Theorem~\ref{thm:dp2} as follows. 

\vskip 5pt
For each integer $t\ge3$ and a non-null graph $G$ define
$$
f(G,t)=\sup
\left\{\frac{s}{a} :\begin{array}{l} s \geq 0, \quad
	t/\sqrt{\log t} < a\le t,\\
	H\text{ is an induced subgraph of }G,\quad \had(H) < a,\\
	H\text{ is } s\text{-chromatic-inseparable}
\end{array}\right\}.
$$
The above supremum is well-defined, as $s=0$, $a=t$ and a one-vertex subgraph $H$ satisfy the conditions above. Moreover, as every graph $H$ is trivially $s$-chromatic-separable for every $s \geq \chi(H)$, the ratio $s/a$ in the definition of $f(G,t)$ is upper bounded by $\frac{\sqrt{\log t}\chi(G)}{t}$. When $G$ is a null graph, let $f(G,t)=0$, for every $t$, by convention.

\begin{theorem}\label{thm:DPinsep}
	There exists $C_{\mathrm{sep}} > 0$ such that for every integer $t\ge3$ and
	every graph $G$ with $\had(G) < t$ we have
	\[
	\chi(G)<C_{\mathrm{sep}}\,(1+f(G,t))t.
	\]
\end{theorem}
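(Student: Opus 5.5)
The plan is to reprove Theorem~\ref{thm:DPinsep} by re-running the Delcourt--Postle argument behind Theorem~\ref{thm:dp2}, but stopping at the point where they use chromatic inseparability instead of continuing to their small-order subgraph. We argue by contradiction. Fix a huge constant $C_{\mathrm{sep}}$ and suppose $\chi(G)\ge C_{\mathrm{sep}}(1+f(G,t))t$ for some $G$ with $\had(G)<t$. Take $t$ minimal, and then $G$ minimal subject to this. We aim to build a $K_t$-model, which gives the contradiction.

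The proof is an induction on $t$ via the trisection/gluing scheme described in the overview. First, observe that $f$ is monotone in the right way. If $H\subseteq G$ is induced and $a\le t$ with $a>t/\sqrt{\log t}$, then $f(H,a)$ is controlled by $f(G,t)$ up to the change in the range of admissible $a$. This is harmless as long as we only recurse to $a\ge t/\sqrt{\log t}$. Below that threshold we instead use the Kostochka--Thomason degeneracy bound $O(a\sqrt{\log a})\le O(t)$. This is exactly why the lower cutoff $t/\sqrt{\log t}$ appears in the definition of $f$.

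Second, set $s:=\chi(G)/3$ (any fixed fraction will do). Since $s/t\ge C_{\mathrm{sep}}(1+f(G,t))/3>f(G,t)$, the graph $G$ itself, with $a=t$, cannot be $s$-chromatic-inseparable. So there are disjoint $H_1,H_2$ with $\chi(H_i)\ge\chi(G)-s$. We iterate this inside $H_1$ (again separable by the same computation, applied to the induced subgraph) to obtain three disjoint induced subgraphs $J_1,J_2,J_3$ with $\chi(J_i)\ge\chi(G)-O(s)$, still much larger than $C_{\mathrm{sep}}t$.

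Third, we need Delcourt--Postle's linkage machinery to glue these pieces. Their lemmas give:
\begin{itemize}
\item a highly connected subgraph, using minimality and the standard fact that a minimal counterexample is $\Omega(\chi)$-connected after passing to a minor-closed critical structure;
\item within each $J_i$, by minimality of $t$ applied with $a=\lceil 2t/3\rceil$, a $K_{2t/3}$-model;
\item disjoint paths linking the three models pairwise through the high connectivity.
\end{itemize}
Contracting the paths together with their branch sets yields $K_t$.

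The main obstacle is this linking step. Delcourt and Postle need small models (hence the order bound $C_{\rm DP}a\log^4 a$) so that the linkage has room, and they obtain small models from the inseparable case via density/minor-growing arguments. In our formulation the inseparable case has already been absorbed into $f$, so the recursion only ever produces separable graphs. We must check that their proof, at every place it invokes the bound $\chi(H)\le f_{\rm DP}a$ on a small $H$, actually only needed inseparability of some $H$ with $s\propto\chi(H)$. Equivalently, we must show that when a subgraph is $s$-inseparable for $s\le f(G,t)a$, their connectivity/linkage lemma still applies. I would first trace their proof's case split (inseparable versus separable) and verify that the small-order hypothesis enters only in the inseparable branch, where we replace it by the definition of $f$. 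This is the technical content one would relegate to an appendix.
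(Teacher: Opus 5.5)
Your outline has a genuine gap. The scheme you actually describe does not close. That scheme is: a minimal counterexample in $t$, splitting with $s=\chi(G)/3$, unrooted $K_{2t/3}$-models in $J_1,J_2,J_3$ from minimality of $t$, then gluing by paths. Your final paragraph's instinct (trace Delcourt--Postle and swap the small-order hypothesis for $f$) is the paper's route, but it only works with their \emph{strengthened} inductive statement, which you never set up.

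\textbf{The accounting does not close.} With $s=\chi(G)/3$, two splits guarantee only $\chi(J_i)\ge\chi(G)/3\ge\frac{1}{3}C_{\mathrm{sep}}(1+f(G,t))t$. For small $f(G,t)$ this is below the $\frac{2}{3}C_{\mathrm{sep}}t$ needed to invoke minimality at $2t/3$. No fixed fraction works in general. Separability forces $s>f(G,t)\,t$, so each split costs more than $f(G,t)\,t$, and this must come out of slack in the $f$-part of the threshold. But $f(J_i,2t/3)$ is not dominated by $f(G,t)$. Its window of admissible $a$ reaches down to $\frac{2t/3}{\sqrt{\log(2t/3)}}<\frac{t}{\sqrt{\log t}}$. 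For such $a$, the only available comparison is to raise $a$ to just above $t/\sqrt{\log t}$, which is admissible for $f(G,t)$ since $\had(H)<a$. This gives only $s/a<\frac{3}{2}\sqrt{\log(2t/3)/\log t}\,f(G,t)$. Hence $\frac{2t}{3}f(J_i,2t/3)$ is bounded only by $(1-o(1))\,t\,f(G,t)$. The $f$-part of the threshold essentially does not shrink, and the accounting fails once $f(G,t)$ is of order $C_{\mathrm{sep}}$. Likewise, Kostochka below the cutoff gives $O(t)$, not $O(a)$, so it cannot be a base case for a threshold scaling with $a$.

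\textbf{How the paper fixes this.} Lemma~\ref{lem:DP-woven}, adapting \cite[Theorem~7.1]{DP}, \emph{freezes} $t$. It states: if $G$ is $Ca$-connected with $\chi(G)\ge C(t+276a(1+f(G,14t)))$, then $G$ is $(a,3a)$-woven. The proof is by induction on $a=(2/3)^it$ only.
\begin{itemize}
\item The fixed additive $Ct$ absorbs the Kostochka case $a\le t/\sqrt{\log t}$, where $\chi(G)<2000t$.
\item Separations use $m=14a(1+f(G,14t))$. This is legitimate because $G_4$ and $H_1$ are induced and $K_{14a}$-minor-free, $14a$ lies in the admissible window of $f(G,14t)$, and $f$ only decreases on induced subgraphs.
\item All losses are $O(Ca(1+f(G,14t)))$ and are paid for by the drop $276a\to184a=276a'$.
\end{itemize}

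\textbf{The gluing step is structurally insufficient.} Unrooted models plus connectivity cannot do it. Nothing forces the connecting paths to start at prescribed, distinct branch sets and to avoid all other branch sets of the three models. This is why the inductive conclusion must be wovenness: a rooted $K_a$-model together with a disjoint linkage. In the paper, the highly connected piece $G_3$ is shown to be $(a,0)$-woven as follows.
\begin{itemize}
\item Attach two neighbors $S'$ to each root in $R'$.
\item Link $S'$ to root sets $T_i\subseteq V(J_i')$ by Bollob\'as--Thomason.
\item Reroute with the weaving lemma (Lemma~\ref{lem:weaving}), using that each $J_i'$ is $(a',3a')$-woven by induction. Each $r_i'$ then collects two branch sets from different $J_i'$.
\end{itemize}
A highly connected hub $H_0$, together with Menger and Lemma~\ref{lem:redundancy}, upgrades this to $G$ being $(a,3a)$-woven. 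The $K_{14a}$-minor case uses Kawarabayashi's Lemma~\ref{lem:rooted-minor}.

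\textbf{Two further corrections.}
\begin{itemize}
\item Minimality does not give $\Omega(\chi)$ vertex-connectivity; even color-critical graphs can have $2$-vertex cuts. Connectivity must come from Gir\~ao--Narayanan, at a cost of $6k$ colors.
\item The small-order hypothesis enters the Delcourt--Postle proof not only in the inseparable branch but also in bounding $\chi(H_0)$. The paper replaces this by trimming $H_0$ to chromatic number $7k$ and re-extracting a $k$-connected subgraph, so that $\chi(H_0)\le98C_{\ref{lem:linkage}}a$.
\end{itemize}

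\textbf{Deriving the theorem from the lemma.}
\begin{itemize}
\item Round $t$ up to a power of three $t'\le3t$.
\item Extract a $C_{\ref{lem:DP-woven}}t'$-connected induced subgraph $G'$.
\item Check $f(G',14t')\le f(G,t)$, splitting on $a\le t$ versus $a>t$ and using $\had(H)<t$ in the latter case.
\item Apply the lemma with $a=t'$ to obtain a $K_{t'}$ minor, a contradiction.
\end{itemize}
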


Theorem~\ref{thm:DPinsep} completes the puzzle reducing the proof of the bootstrap step to chromatic inseparable graphs, which are handled as described above.
The proof of Theorem~\ref{thm:DPinsep} repeats the proof of Theorem~\ref{thm:dp2} essentially verbatim and is presented in Appendix~\ref{app:DPinsep}.

\subsection{Proof of Theorem \ref{thm:bootstrap}}

In this subsection we implement the outline described above. We start with the Gy\'{a}rf\'{a}s path argument.

\begin{lemma}\label[lemma]{lem:path}
Let $s,k \geq 0$ be real. Let $H$ be an $s$-chromatic-inseparable graph with $E(H) \neq \emptyset$ and   $b(H)\le k$. Then there exists an induced subgraph $J$ of $H$ such that
\begin{equation}\label{eq:path}
 \chi(J) > s  \quad \mathrm{and} \quad
 \alpha(J)\le k(k-1). 
\end{equation}
\end{lemma}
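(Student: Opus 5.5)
The plan is a Gyárfás path argument by contradiction. Suppose every induced subgraph $J$ of $H$ with $\alpha(J)\le k(k-1)$ satisfies $\chi(J)\le s$. We will build an arbitrarily long induced path in $H$. This is impossible, since an induced path on $p$ vertices is a connected induced bipartite subgraph, which forces $p\le b(H)\le k$.

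The candidate subgraphs $J$ are closed neighbourhoods of induced paths. For an induced path $v_1\cdots v_p$ set $A_p:=N_H[v_1]\cup\dots\cup N_H[v_p]$ and $J_p:=H[A_p]$. Let $I$ be independent in $J_p$. For each $j$, the set $I\cap N_H[v_j]$ is either $\{v_j\}$ or an independent subset of $N_H(v_j)$. In the latter case, together with $v_j$ it forms an induced star, so it has at most $b(H)-1\le k-1$ elements. Hence $|I|\le p(k-1)\le k(k-1)$ whenever $p\le k$; the degenerate case $k<2$ is handled trivially, since then $H$ contains no edge. So the contrary assumption gives $\chi(J_p)\le s$ for every induced path on at most $k$ vertices.

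Two preliminary facts come from $s$-chromatic-inseparability. First, $\chi(H)>s$, since otherwise $H$ would be trivially separable. Second, $H$ never contains two vertex-disjoint subgraphs both of chromatic number at least $\chi(H)-s$. Given a path with $\chi(J_p)\le s$, subadditivity gives $\chi(H-A_p)\ge\chi(H)-s>0$. So some component $C_{p+1}$ of $H-A_p$ has $\chi(C_{p+1})\ge\chi(H)-s$, and by inseparability it is the only component of $H-A_p$ with this property. Likewise, let $C_1$ be the unique component of $H$ with $\chi(C_1)\ge \chi(H)-s$.

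The induction maintains this invariant: $v_1\cdots v_p$ is an induced path, $v_p$ has a neighbour in the big component $C_p$ of $H-A_{p-1}$ (for $p=1$, take $v_1\in C_1$), and $C_p$ is disjoint from $A_{p-1}$. For the step, note that $H-A_p\subseteq H-A_{p-1}$. Hence $C_{p+1}$ lies in a single component of $H-A_{p-1}$, and by uniqueness of the big component that component is $C_p$. Since $C_p\cap A_{p-1}=\emptyset$, we get $C_p\cap A_p=C_p\cap N_H(v_p)$, which is nonempty because $v_p$ has a neighbour in $C_p$ (for $p=1$ use $v_1\in C_1$ and $E(H)\ne\emptyset$). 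Thus $C_{p+1}$ is a component of $C_p-N_H(v_p)$. As $C_p$ is connected and $C_p\cap N_H(v_p)\neq\emptyset$, some $v_{p+1}\in N_H(v_p)\cap C_p$ has a neighbour in $C_{p+1}$. This $v_{p+1}$ lies outside $A_{p-1}$, so it is nonadjacent to $v_1,\dots,v_{p-1}$, and the extended path is induced.

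Iterating beyond $k$ steps produces an induced path on more than $k$ vertices, contradicting $b(H)\le k$. Hence at some stage $p\le k$ we have $\chi(J_p)>s$ together with $\alpha(J_p)\le k(k-1)$, which proves the lemma. The main point needing care is the step where inseparability forces the new big component $C_{p+1}$ into the old one $C_p$; this is what lets the path be extended. The rest is bookkeeping: the base case and the edge case $\chi(H)-s\le 1$.
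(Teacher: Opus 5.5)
Your proof is correct and is essentially the paper's argument. The paper fixes a ``Gy\'{a}rf\'{a}s pair'' $(U,v)$ with $|U|$ maximum and shows that $\chi(H[N_H[U\cup\{v\}]])\le s$ would let the path be extended through the unique component of chromatic number at least $\chi(H)-s$; this is your iterative extension step phrased extremally, and the paper bounds $\alpha$ by the same induced-star count. (Small nit: at $p=1$, $v_2\in N_H(v_1)\cap C_1$ exists because $C_1$ is connected and $\emptyset\neq C_2\subseteq C_1\setminus N_H[v_1]$, not because $E(H)\neq\emptyset$.)
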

\begin{proof}  As $E(H) \neq \emptyset$, we have $k \geq b(H) \geq 2$. A \emph{Gyarfas pair} in $H$ is a pair $(U,v)$ such that \begin{itemize} \item [(G1)]  $U \subseteq V(H)$, $v \in V(H) - U$, \item[(G2)]  $H[U \cup \{v\}]$ is a path with an end $v$, and \item [(G3)] if $v$ has no neighbour in a component $C$ of $H  -  N_H[U]$ then $\chi(C) < \chi(H)-s.$ \end{itemize}
	
As $H$ is $s$-chromatic-inseparable there exists a unique component $C_0$ of $H$ such that $\chi(C_0) \geq \chi(H)-s. $ Then $(\emptyset,v)$ is a Gyarfas pair for every $v \in V(C_0)$. In particular, there exists a Gyarfas pair in $H$.

We choose a Gyarfas pair $(U,v)$ in $H$ with $|U|$ maximum and let $J=H[N_H[U \cup \{v\}]]$. We claim that $J$ satisfies \eqref{eq:path}.
Suppose first, for a contradiction, that $\chi(J) \leq s$. Then $$\chi(H  -  V(J))  \geq \chi(H) - \chi(J) \geq \chi(H)-s.$$ Let $C$ be a component of $H  -  V(J)$ with $\chi(C) \geq \chi(H)-s$. By $s$-chromatic-inseparability of $H$, such a component $C$ is unique. It is contained in a component $C'$ of $H  -  N_H[U]$ with $\chi(C') \geq \chi(C) 
\geq \chi(H)-s.$ By (G3), $v$ has a neighbour in $V(C')$. As $C$ is a component of   $C'  -  N_H(v)$ there exists $v' \in V(C') \cap N_H(v) \subseteq V(H) - N_H[U]$ with a neighbour in $C$.   

We claim that $(U \cup \{v\}, v')$ is a Gyarfas pair. Clearly (G1) holds for $(U \cup \{v\}, v')$. As $v'$ has no neighbours in $U$, and $vv' \in E(H)$, (G2) holds. Finally, as $C$ is the unique component of  $H  -  N_H[U \cup \{v\}]$ with $\chi(C) \geq \chi(H)-s$, and $v'$ has a neighbour in $C$, (G3) holds. 

The Gyarfas pair $(U \cup \{v\}, v')$ contradicts the choice of $(U,v)$. This contradiction implies that $\chi(J) > s$.

It remains to show that $\alpha(J) \leq k(k-1)$. 
Let $P = H[U \cup \{v\}]$. As $P$ is a path, and thus connected and bipartite, we have $\v(P) \leq k.$ Let $S$ be an independent set in $J$  with $|S| = \alpha (J)$. For every $u \in V(P)$ the set $\{u\} \cup (S \cap N_H(u))$ induces a connected and bipartite subgraph in $H$, and so $|S \cap N_H(u)| \leq k-1$. 

If $P$ consists of one vertex $u$ and $u\in S$, then
$|S|=1 \le k(k-1)$. Otherwise, every vertex of $S$ has a neighbour
in $P$ and $$ \alpha(J) = |S| \leq \sum_{u \in V(P)}|S \cap N_H(u)| \leq (k-1)\v(P) \leq k(k-1),$$
as desired. 	
\end{proof}

Combining Lemma~\ref{lem:path} with the Reed-Seymour fractional bound we obtain the following useful consequence.

\begin{corollary}\label{cor:separation}  Let $a,s,k \geq 0$ be real. Suppose that for every graph $J$ with $\had(J) < a$  and $\v(J) \leq 2ak^2$ we have $\chi(J) \leq s$.
Let  $H$ be an $s$-chromatic-inseparable graph with $E(H) \neq \emptyset$ and $b(H) \le k$.
Then $\had(H) \geq a$.
\end{corollary}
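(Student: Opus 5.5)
We argue by contradiction: suppose $\had(H) < a$. By Lemma~\ref{lem:path}, applied to $H$ (which is $s$-chromatic-inseparable, has an edge, and satisfies $b(H)\le k$), there is an induced subgraph $J$ of $H$ with $\chi(J)>s$ and $\alpha(J)\le k(k-1)$. The aim is to show that $J$ falls within the range of the hypothesis, so that $\chi(J)\le s$, which is a contradiction.

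For this we need $\had(J)<a$ and $\v(J)\le 2ak^2$. The first condition is immediate, since $J$ is a subgraph, hence a minor, of $H$, so $\had(J)\le\had(H)<a$. For the second, we use the Reed--Seymour bound (Theorem~\ref{thm:RS}) together with the standard inequality $\chi_f(J)\ge \v(J)/\alpha(J)$. The latter follows from the dual LP~(D) with $y_v\equiv 1/\alpha(J)$, or directly from~(P), since each independent set covers at most $\alpha(J)$ vertices. This gives
$$\v(J)\le \alpha(J)\,\chi_f(J)\le k(k-1)\cdot 2\,\had(J).$$
If $J$ is non-null, then $\had(J)\ge 1$, and with $\had(J)<a$ we get $\v(J)< 2a\,k(k-1)\le 2ak^2$. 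The hypothesis then yields $\chi(J)\le s$, contradicting $\chi(J)>s$. Hence $\had(H)\ge a$.

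The remaining work is bookkeeping at the edge cases. We must check that $J$ is non-null; this holds because $\chi(J)>s\ge 0$ forces $J$ to have a vertex. Since $a$ is real, the quantity $\had(J)$ is an integer strictly less than $a$, and the bound $2\had(J)k(k-1)\le 2ak^2$ holds regardless. No step here is a genuine obstacle: the substance lies in Lemma~\ref{lem:path} and Theorem~\ref{thm:RS}, and the corollary only chains them together.
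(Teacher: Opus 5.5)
Your proof is correct and matches the paper's argument: apply Lemma~\ref{lem:path} to obtain $J$, then bound $\v(J)\le\chi_f(J)\alpha(J)\le 2\had(J)\alpha(J)\le 2ak^2$ via Theorem~\ref{thm:RS}, and invoke the hypothesis to contradict $\chi(J)>s$. Your check that $J$ is non-null and your strict inequality are harmless but unnecessary, since $0\le\had(J)<a$ and $0\le\alpha(J)\le k^2$ already give the non-strict bound.
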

\begin{proof} By Lemma \ref{lem:path}  there exists an induced subgraph $J$ of $H$ such that 
$
	\chi(J) > s $ and $
	\alpha(J)\le k(k-1).$ 
Suppose for a contradiction that $\had(H) 
< a$.	
As  $\had(J) \leq \had(H)$, we have $$\v(J) \leq \chi_f(J)\alpha(J) \leq 2\had(J)\alpha(J) \leq 2ak^2,$$
where the second inequality uses Theorem~\ref{thm:RS}. 
Thus, $\chi(J) \leq s$ by the corollary assumptions, a contradiction.
\end{proof}

We are now ready to finish the proof of Theorem~\ref{thm:bootstrap}, which we restate for convenience.

\begin{bootstraptheorem}
For every $\alpha>0$, we have that $\mathrm{LH}(\alpha)$ implies $\mathrm{LH}(\frac{4}{3}\alpha)$. 
\end{bootstraptheorem}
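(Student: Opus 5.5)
Fix $\alpha>0$, let $C_\alpha$ be the constant from $\mathrm{LH}(\alpha)$, and set $k:=(\log t)^{\alpha/3}$. Let $G$ satisfy $\had(G)<t$ and $\v(G)\le t(\log t)^{4\alpha/3}$. We aim to show $\chi(G)\le C't$ with $C'$ depending only on $\alpha$.

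\textbf{Step 1: reduce to graphs with $b<k$.} Greedily extract vertex sets $V_1,V_2,\dots$, each inducing a connected bipartite subgraph of order at least $k$, until none remains. Let $W=\bigcup_i V_i$. The minor $G'$ of $G[W]$ obtained by contracting each $V_i$ satisfies $\had(G')<t$ and
\[
\v(G')\le \v(G)/k\le t(\log t)^{\alpha}.
\]
Hence $\mathrm{LH}(\alpha)$ gives $\chi(G')\le C_\alpha t$, and Lemma~\ref{lem:contraction} with $s=2$ gives $\chi(G[W])\le 2C_\alpha t$. The remainder $G_0:=G-W$ satisfies $b(H)<k$ for every induced subgraph $H$ of $G_0$. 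Since $\chi(G)\le\chi(G[W])+\chi(G_0)$, it suffices to bound $\chi(G_0)$ linearly in $t$.

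\textbf{Step 2: apply Theorem~\ref{thm:DPinsep} to $G_0$.} Let $T=\lceil t\rceil$ (rounding is harmless). This gives $\chi(G_0)<C_{\mathrm{sep}}(1+f(G_0,T))T$, so it suffices to show $f(G_0,T)\le s_0$ for a constant $s_0$. Take any induced $H\subseteq G_0$ with $\had(H)<a$, where $T/\sqrt{\log T}<a\le T$, and with $H$ being $s$-chromatic-inseparable. We want $s\le Ka$ for a constant $K$.

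Set $s:=C_\alpha a$ (or slightly larger) and verify the hypothesis of Corollary~\ref{cor:separation} with parameters $a,s,k$. The hypothesis requires: every $J$ with $\had(J)<a$ and $\v(J)\le 2ak^2$ has $\chi(J)\le C_\alpha a$. This follows from $\mathrm{LH}(\alpha)$ at scale $a$, provided
\[
2ak^2=2a(\log t)^{2\alpha/3}\le a(\log a)^{\alpha}.
\]
Since $a\ge t/\sqrt{\log t}$, we have $\log a\ge \tfrac12\log t$, so the condition holds for large $t$ because $2\alpha/3<\alpha$. Also $a\to\infty$ with $t$, so $\mathrm{LH}(\alpha)$ applies.

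The corollary then says that if $H$ were $s$-inseparable with $E(H)\ne\emptyset$, we would get $\had(H)\ge a$, a contradiction. Edgeless $H$ has $\chi\le 1$ and is $s$-separable only trivially, so it contributes $s\le 1\le a$. Since inseparability is monotone in $s$ (being $s$-separable implies being $s'$-separable for $s'\ge s$), every admissible $s$ satisfies $s<C_\alpha a$. Therefore $f(G_0,T)\le C_\alpha+1$.

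\textbf{Conclusion.} Combining the two steps,
\[
\chi(G)\le 2C_\alpha t+C_{\mathrm{sep}}(C_\alpha+2)(t+1)=O(t).
\]

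\textbf{Main obstacle.} The delicate point is the exponent bookkeeping in Step 2. The condition $\v(J)\le 2ak^2$ must fall within the range where $\mathrm{LH}(\alpha)$ applies at scale $a$, which may be as small as $t/\sqrt{\log t}$. This is exactly what fixes $k=(\log t)^{\alpha/3}$: it needs $2\alpha/3<\alpha$, while Step 1 separately needs $4\alpha/3-\alpha\le\alpha/3$. The remaining work is checking that $\mathrm{LH}(\alpha)$ is invoked only for sufficiently large real parameters, and handling the edge cases in the definition of inseparability and the requirement $E(H)\ne\emptyset$.
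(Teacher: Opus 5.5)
Your proposal is correct and follows the paper's proof essentially step for step. You extract connected induced bipartite subgraphs of order about $k=(\log t)^{\alpha/3}$, bound the extracted part via Lemma~\ref{lem:contraction} and $\mathrm{LH}(\alpha)$, and bound the remainder via Theorem~\ref{thm:DPinsep}, using Corollary~\ref{cor:separation} together with $\mathrm{LH}(\alpha)$ at scale $a$ to obtain $f\le C_\alpha+O(1)$. The differences from the paper are cosmetic: you invoke the corollary with the fixed value $C_\alpha a$ and use monotonicity of separability, rather than arguing by contradiction with the given $s$; you pass to $T=\lceil t\rceil$ instead of first proving the integer case; and you explicitly dispose of edgeless $H$, which the paper leaves implicit.
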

\begin{proof}

By $\mathrm{LH}(\alpha)$  there exist $C_\alpha, t_0 \in \mathbb{R}_{\ge 1}$ such that for every $t \geq t_0$ every graph $G$ with $h(G)<t$ and $\v(G)\le t\log^{\alpha} t$ satisfies $\chi(G)\le C_\alpha t$.

Let $$C' = (2C_\alpha + C_{\mathrm{sep}}(1+C_\alpha)),$$ where $C_{\mathrm{sep}}$ is as in Theorem~\ref{thm:DPinsep}. 
In the following, we will show that 
for every  $t\in \mathbb{N}$ sufficiently large, as a function of $C_\alpha,\alpha$ and $t_0$,  every graph $G$ with $h(G)<t$ and $\v(G)\le t\log^{4\alpha/3} t$ satisfies $\chi(G)\le C' t$, i.e. $\mathrm{LH}(\frac{4}{3}\alpha)$ holds. At the expense of increasing the constant from $C'$ to $2C'$, this then also establishes the statement for all large enough real-valued $t$.

Let $k= (\log t)^{\alpha/3}$. 
We start by extracting large bipartite subgraphs from $G$ as follows.
Let $H_1,\ldots,H_m$ be a collection of pairwise vertex disjoint induced connected bipartite subgraphs of $G$ such that $\v(H_i) > k$ for $1 \le i \le m$, chosen maximal subject to these conditions. Let $U_1 = \bigcup_{i=1}^{m}V(H_i)$ and let $U_2 = V(G) - U_1$. Let $G_1 = G[U_1]$ and $G_2 = G[U_2]$. Note that by the choice of $H_1,\ldots,H_m$ we have $b(G_2) \leq k$. We upper bound $\chi(G_1)$ and $\chi(G_2)$ separately.

An upper bound on $\chi(G_1)$ is derived from the local linear bound at exponent $\alpha$ as  follows. Let $G'_1$ be a minor of $G_1$ obtained by contracting each $H_i$ to a single vertex $w_i$. Then $$\v(G'_1) \leq \frac{\v(G_1)}{k} \leq \frac{\v(G)}{k} \leq t (\log t)^{\alpha},$$   
and $\had(G'_1) \leq \had(G)<t$.  Assuming $t \geq t_0$, we have $\chi(G'_1) \leq C_\alpha t$.

By Lemma~\ref{lem:contraction} we have
\begin{equation}\label{eq:G1} \chi(G_1) \leq 2 \chi(G_1') \leq 2 C_\alpha t.\end{equation} 

The upper bound on $\chi(G_2)$ uses Theorem~\ref{thm:DPinsep} where Corollary~\ref{cor:separation} is used to bound $f(G_2,t)$. Let  $s,a$ and $H$ be chosen to satisfy the conditions in $f(G_2,t)$. That is $s \geq  0,$ $t/\sqrt{\log t} < a \leq t,$ $H$ is an induced subgraph of $G_2$, $\had(H)< a$, $H$  is $s$-chromatic-inseparable. 

As $b(H) \leq b(G_2) \leq k$, Corollary~\ref{cor:separation} would contradict the condition $\had(H) < a$, if it is applicable. Thus it is not, and  there exists a graph $J$ with $\had(J) < a$, $\v(J) \leq 2ak^2$ and $\chi(J) > s$.

We assume that $t$ is large enough so that $2ak^2 = 2a (\log t)^{2\alpha/3} \leq a (\log a)^{\alpha}$ and $a \geq t_0$. Thus as $\had(J) < a$ and $\v(J)\le a(\log a)^\alpha$, we have $\chi(J) \leq  C_\alpha a$. It follows that $s <  C_\alpha a$. As this conclusion holds for all $s,a$ and $H$ satisfying the conditions in the definition of $f(G_2,t)$  it follows that $f(G_2,t) \leq   C_\alpha $.

Theorem~\ref{thm:DPinsep} now implies  \begin{equation}\label{eq:G2} \chi(G_2) \leq C_{\mathrm{sep}}(1+f(G_2,t))t \leq C_{\mathrm{sep}}(1+C_\alpha)t. \end{equation}
Combining \eqref{eq:G1} and \eqref{eq:G2} we have
$$\chi(G) \leq \chi(G_1) + \chi(G_2) \leq (2 C_\alpha  + C_{\mathrm{sep}}(1+ C_\alpha))t  = C't,$$
 as desired.
\end{proof}

\section{Concluding remarks}\label{sec:conc}

\paragraph*{Strengthenings.}
Theorem~\ref{thm:linear} establishes the truth of one of the most prominent relaxations of Conjecture~\ref{con:hadwiger}, contrasting recent results disproving some natural strengthenings~\cite{illingworth2026disproofdominatinghadwigerconjecture,kühn2025disproofoddhadwigerconjecture}.

Linear versions of several such strengthenings remain open, but one  is settled by combining Theorem~\ref{thm:linear}  with a result of the second author. Given graphs $G$ and $H$ we say that $G$ has \emph{an odd $H$ minor} if a graph isomorphic to $H$ can be obtained from a subgraph $G'$ of $G$ by contracting a set of edges forming a cut in $G'$. Let $h_o(G)$ denote the maximum integer $h$ such that the graph $G$ contains $K_h$ 
as an odd minor. 
Gerards and Seymour (see \cite[p. 115]{MR1304254}) conjectured that $\chi(G) \leq h_o(G)$ for every graph $G$. This conjecture has been very recently disproved in~\cite{kühn2025disproofoddhadwigerconjecture}, but the second author~\cite{MR4379303} proved that if $f$ is any function such that $\chi(G) \leq f(h(G))$ for every graph $G$, then every graph $G$ satisfies $\chi(G)\le 2f(h_o(G))$. Thus Theorem~\ref{thm:linear} implies the following.

\begin{corollary}
There is an absolute constant $C$ such that every graph $G$ satisfies $\chi(G)\le C\cdot h_o(G)$. 
\end{corollary}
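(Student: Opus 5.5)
This follows at once by composing Theorem~\ref{thm:linear} with the result of the second author~\cite{MR4379303} quoted just before the statement. That result says: if $f:\mathbb{N}\to\mathbb{N}$ is any function with $\chi(G)\le f(h(G))$ for every graph $G$, then every graph $G$ satisfies $\chi(G)\le 2f(h_o(G))$. The plan is to take $C_0$ to be the absolute constant supplied by Theorem~\ref{thm:linear}, set $f(h):=C_0 h$, and conclude that $\chi(G)\le 2C_0\,h_o(G)$ for every $G$. The corollary then holds with $C:=2C_0$.

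The only things to check are bookkeeping. First, $f$ must be an admissible input to~\cite{MR4379303}, namely a bound valid for all graphs. This is exactly the content of Theorem~\ref{thm:linear}. If integrality of $f$ is required, we may use $\lceil C_0\rceil h$ instead, at no cost. Second, the degenerate case $h_o(G)=0$ only occurs for the null graph, so it is trivial. Indeed, a single vertex is an odd $K_1$ minor, so $h_o(G)\ge 1$ whenever $G$ is non-null.

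I expect no real obstacle, since all the depth lies in Theorem~\ref{thm:linear} and in the lifting theorem of~\cite{MR4379303}. That lifting theorem works by contracting bipartite pieces in the spirit of Lemma~\ref{lem:contraction}, which is where the factor $2$ comes from. The one point to confirm is that~\cite{MR4379303} is stated for an arbitrary bounding function $f$, rather than only for the conjectured $f(h)=h$. The paper's quotation asserts precisely this.
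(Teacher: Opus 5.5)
Your proposal is correct and is exactly the paper's argument: plug $f(h)=C_0h$ from Theorem~\ref{thm:linear} into the second author's lifting result~\cite{MR4379303} to get $\chi(G)\le 2C_0\,h_o(G)$, so $C=2C_0$ works. Your extra bookkeeping (integrality of $f$, and the observation that $h_o(G)\ge 1$ for non-null $G$) is harmless and fine.
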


Another well-studied strengthening of Hadwiger's conjecture generalizes the notion of coloring. A graph $G$ is said to be \emph{$k$-list colorable} if for every assignment of lists $\{L(v)\}_{v \in V(G)}$ to vertices of $G$ such that $|L(v)| \geq k$ for every $v \in V(G)$, there is a choice of colors $\{c(v)\}_{v \in V(G)}$ such that $c(v) \in L(v)$, and $c(v) \neq c(u)$ for every $uv \in E(G)$.  Voigt~\cite{Voigt93} has shown that there exist planar graphs which are not $4$-list colorable, disproving an extension of  Hadwiger's conjecture for list colorings. The linear list Hadwiger conjecture however remains open and has been explicitly stated by Kawarabayashi and Mohar.

\begin{conjecture}
[\cite{KawMoh07}]\label{c:ListHadwiger} There exists a positive constant $C$ such that for every integer $t \geq 1$, every graph with no $K_{t}$ minor is $Ct$-list colorable. 
\end{conjecture}

It is known that if Conjecture~\ref{c:ListHadwiger} holds, then necessarily $C\ge 2$~\cite{MR2861411,MR4496022}, the best known asymptotic bound currently is $O(t\log\log t)$~\cite{GuXu2026}. Several arguments in the current paper extend to list-coloring, but crucially Lemma~\ref{lem:contraction} which allows us to bound the chromatic number of a graph by a linear function of the chromatic number of a much smaller minor appears to have no analogue for list-colorings. This prevents direct extension of the bootstrap argument to this setting.

\medskip 

Finally, another bold strengthening has been recently proposed by Illingworth and Wood~\cite{IllingworthWood2025}. For a graph $G$, a \emph{dominating $K_t$-model} is a sequence $(X_1,\dotsc,X_t)$ of pairwise disjoint non-empty subsets of $V(G)$ such that each $G[X_i]$ is connected and every vertex of $X_j$ has a neighbour in $X_i$ whenever $i<j$. Note that replacing ``every vertex of $X_j$'' by ``some vertex of $X_j$'' recovers the standard definition of a $K_t$-model. 

The Dominating Hadwiger's conjecture asserted that every graph with no dominating $K_{t}$ model is $(t-1)$-colorable. It was recently disproved in~\cite{illingworth2026disproofdominatinghadwigerconjecture}. It would be interesting to see whether any of the ideas in the current paper can be used to improve the upper bounds on the chromatic number of graphs with no dominating $K_t$ model.

\medskip 

We summarize the status of the strengthenings  mentioned above in the following table.  The last row shows the best known upper bound on the (list) chromatic number of a graph without corresponding $K_t$ minor or model. The entries in bold are consequences of the results in this paper.

\begin{table}[htbp]
\centering
\renewcommand{\arraystretch}{1.25}
\begin{tabular}{lcccc}
\hline
 & Original & Odd & List & Dominating \\
\hline
Exact
 & Open
 & False~\cite{kühn2025disproofoddhadwigerconjecture}
 & False~\cite{Voigt93}
 & False~\cite{illingworth2026disproofdominatinghadwigerconjecture}
\\
Linear
 & \textbf{True}
 & \textbf{True}
 & Open
 & Open
\\
Best bound
 & $\boldsymbol{O(t)}$
 & $\boldsymbol{O(t)}$
 & $O(t\log\log t)$~\cite{GuXu2026}
 & $O(t(\log t)^2)$~\cite{GNTT2026}
\\
\hline
\end{tabular}
\label{tab:hadwiger-variants}
\end{table}

We believe that the Linear List Hadwiger conjecture is most likely true, while the Linear Dominating Hadwiger conjecture is most likely false. We are undecided on Hadwiger's conjecture.

\paragraph*{The constant.}

It is natural to wonder what value of $C$ can be guaranteed by the presented proof approach for Theorem~\ref{thm:linear}. We made no effort to optimize this value. A rough estimate, using Kostochka's bound (see Lemma~\ref{lem:density}) for small $t$, indicates one may take $C$ to be about $10^{100}$. There is a lot of room for improvement, and we see no clear barriers to reducing the value of $C$ below $10^{10}$. Proving Theorem~\ref{thm:linear} with, say, $C=100$ would likely require substantial new ideas.

\bibliographystyle{abbrv}
\bibliography{references.bib}

\appendix
\numberwithin{theorem}{section}

\section{A quantitative version of the Pippenger--Frankl--R\"{o}dl theorem}\label{app:matching}

In this section, we give a proof of Theorem~\ref{thm:pippenger}. The Theorem is proved by an application of the R\"{o}dl nibble method introduced in 1985~\cite{RNibble}. A  typical application of this semi-random method constructs a  matching (or a similar discrete structure) in several rounds. In each round a small portion of a matching is randomly constructed, while the rest of the hypergraph is cleaned up to ensure that it is disjoint from the constructed matching and satisfies any regularity, density or other conditions, which we wish to control between rounds. 

We reframe this procedure by packaging a step of it in a single deterministic lemma to avoid the need to simultaneously consider probability distributions introduced at each round. This simplification comes with a trade-off: We will need to maintain a fairly artificial dependence between the parameters of the hypergraph that we start with and that we produce in the lemma. 
 
We start by introducing the necessary notation. For the remainder of the appendix let the integer $r \geq 2$ be fixed and we assume that every hypergraph is $r$-uniform and is edge-weighted with nonnegative edge weights.

For a weighted hypergraph $\mc{H}$ with weights $(x_e)_{e \in E(\mc{H})}$,  let
\begin{align*} \e(\mc{H}) = \sum_{e \in E(\mc{H})}x_e, \qquad 
  d_{\mc{H}}(v)=\sum_{e\ni v}x_e,\qquad
  d_{\mc{H}}(u,v)=\sum_{e\supseteq\{u,v\}}x_e, \\
\Delta(\mc{H}) = \max_{v \in V(\mc{H})}d_{\mc{H}}(v) \quad \text{and} \quad \Delta_2(\mc{H}) = \max_{\substack{u,v \in V(\mc{H})\\u \neq v}} d_{\mc{H}}(u,v)\end{align*}
denote the total edge weight, degrees, codegrees and maximum degree and codegree of $\mc{H}$, respectively. For a hypergraph  with at most one vertex let $\Delta$ and $\Delta_2$ be equal to zero by convention. 

For  $D>0$ and a hypergraph $\mc{H}$ with  $\Delta(\mc{H}) \leq D$, define
\begin{equation}\label{eq:app3-deficit}
  \deff(D,\mc{H})
  =\sum_{v\in V(\mc{H})}\left(1-\frac{d_{\mc{H}}(v)}D\right)
  =\v(\mc{H})-\frac rD \e(\mc{H})\ge0.
\end{equation}
Thus $\deff(D,\mc{H})$ is a measure for the proportion of missing weight of hyperedges in $\mc{H}$ as compared to a weighted hypergraph on the same vertex set with all weighted degrees equal to $D$.

The only somewhat non-standard probabilistic tool  used in our proof is the following bound on the variance of a function of independent random variables in terms of variances of coordinates.

\begin{proposition}[Efron--Stein inequality {\cite[Proposition~1]{BBLM}}]
\label{prop:app3-efron-stein}
Let $Y_1,\ldots,Y_m$ be independent random variables, and let
$Y'_1,\ldots,Y'_m$ be independent copies, independent also of
$Y_1,\ldots,Y_m$. Let $Z=f(Y_1,\ldots,Y_m)$ be square-integrable,
and put
\[
  Z^{(i)}=f(Y_1,\ldots,Y_{i-1},Y'_i,Y_{i+1},\ldots,Y_m).
\]
Then
\[
  \operatorname{Var}Z
  \le\frac12\sum_{i=1}^m\mathbb E\bigl[(Z-Z^{(i)})^2\bigr].
\]
\end{proposition}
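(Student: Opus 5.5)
The statement to prove is the Efron--Stein inequality, and I will derive it from a martingale (Doob) decomposition of $Z-\mathbb E Z$. Write $\mathcal F_i=\sigma(Y_1,\ldots,Y_i)$ and $\mathbb E_i[\cdot]=\mathbb E[\cdot\mid\mathcal F_i]$. Set $\Delta_i=\mathbb E_iZ-\mathbb E_{i-1}Z$, so that $Z-\mathbb EZ=\sum_{i=1}^m\Delta_i$. The $\Delta_i$ are martingale differences, hence pairwise orthogonal in $L^2$: for $i<j$, $\mathbb E[\Delta_i\Delta_j]=\mathbb E[\Delta_i\,\mathbb E_{j-1}\Delta_j]=0$. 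Therefore
\[
\operatorname{Var}Z=\sum_{i=1}^m\mathbb E[\Delta_i^2].
\]

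The key step is to bound each $\mathbb E[\Delta_i^2]$ by the conditional variance of $Z$ in the $i$-th coordinate. Let $\mathbb E^{(i)}$ denote expectation with respect to $Y_i$ only, with all other coordinates fixed. By independence, $\mathbb E_{i-1}Z=\mathbb E_i\bigl[\mathbb E^{(i)}Z\bigr]$, since integrating out $Y_i$ and then conditioning on $\mathcal F_i$ amounts to integrating out $Y_i,\ldots,Y_m$. Hence
\[
\Delta_i=\mathbb E_i\bigl[Z-\mathbb E^{(i)}Z\bigr].
\]
Conditional Jensen then gives $\Delta_i^2\le\mathbb E_i\bigl[(Z-\mathbb E^{(i)}Z)^2\bigr]$, so
\[
\mathbb E[\Delta_i^2]\le\mathbb E\bigl[\operatorname{Var}^{(i)}Z\bigr],
\]
where $\operatorname{Var}^{(i)}$ is the variance in $Y_i$ alone. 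I expect this identification of $\mathbb E_{i-1}Z$ to be the only delicate point. It relies on independence of the coordinates, and I would verify it by writing both sides as integrals over the product measure, using Fubini.

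Finally I convert conditional variances into the symmetric form. Let $X,X'$ be i.i.d. and square-integrable; then $\operatorname{Var}X=\tfrac12\mathbb E[(X-X')^2]$. Apply this conditionally on $(Y_j)_{j\ne i}$, with $X=Z$ and $X'=Z^{(i)}$. Given the other coordinates, $Z$ and $Z^{(i)}$ are conditionally i.i.d., because $Y'_i$ is an independent copy of $Y_i$. This yields $\mathbb E[\operatorname{Var}^{(i)}Z]=\tfrac12\mathbb E[(Z-Z^{(i)})^2]$. Summing over $i$ completes the proof.

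Square-integrability of $Z$ ensures that all the quantities above are finite. In particular $Z^{(i)}$ has the same distribution as $Z$, so it is also square-integrable.
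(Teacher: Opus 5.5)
Your proof is correct and complete. It is the standard Doob-martingale proof of the Efron--Stein inequality: the orthogonal decomposition $\operatorname{Var}Z=\sum_i\mathbb E[\Delta_i^2]$, the independence identity $\mathbb E_{i-1}Z=\mathbb E_i[\mathbb E^{(i)}Z]$ followed by conditional Jensen, and the conditionally i.i.d.\ symmetrization. Along the way you also obtain the intermediate form $\operatorname{Var}Z\le\sum_i\mathbb E[\operatorname{Var}^{(i)}Z]$.

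The paper gives no proof of its own here. It takes the proposition as a black box from the cited reference and only uses its Bernoulli specialization (Corollary~\ref{cor:variance}). Your argument therefore supplies a self-contained justification of a fact the paper cites.
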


We need a specialization of Proposition~\ref{prop:app3-efron-stein} to Bernoulli random variables.

\begin{corollary}
	\label{cor:variance}
	Let $Y_1,\ldots,Y_m$ be independent Bernoulli random variables, and let $p_i = \mathbb E[Y_i]$. Let $Z=f(Y_1,\ldots,Y_m)$. 
	Suppose that $$|f(y_1,\ldots,y_{i-1},y_i,y_{i+1},\ldots,y_m)-f(y_1,\ldots,y_{i-1},1-y_i,y_{i+1},\ldots,y_m)| \leq L_i$$
	for all $y_1,\ldots,y_m \in \{0,1\}$ and every $i \in [m].$ 
	Then\begin{equation}\label{eq:app3-bernoulli-variance}
		\operatorname{Var}Z\le\sum_{i=1}^m p_i(1-p_i)L_i^2.
	\end{equation}
\end{corollary}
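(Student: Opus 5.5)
We apply Proposition~\ref{prop:app3-efron-stein} directly with $Z=f(Y_1,\ldots,Y_m)$, where the $Y_i$ are independent Bernoulli variables with $\mathbb E[Y_i]=p_i$. Square-integrability is automatic: $Z$ takes only finitely many values, because the vector $(Y_1,\ldots,Y_m)$ ranges over $\{0,1\}^m$. So the Efron--Stein bound
$$\operatorname{Var}Z\le\frac12\sum_{i=1}^m\mathbb E\bigl[(Z-Z^{(i)})^2\bigr]$$
is available, and it remains to bound each summand by $2p_i(1-p_i)L_i^2$.

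Fix $i\in[m]$ and let $Y'_i$ be an independent copy of $Y_i$. The random variable $Z^{(i)}$ differs from $Z$ only in its $i$-th argument. We split according to the values of $Y_i$ and $Y_i'$.
\begin{itemize}
\item If $Y_i=Y'_i$, then $Z=Z^{(i)}$, so the difference is $0$.
\item If $Y_i\neq Y'_i$, then $Y'_i=1-Y_i$. In this case $Z^{(i)}$ is exactly $f$ evaluated at the vector $Y$ with coordinate $i$ flipped. By the hypothesis, $|Z-Z^{(i)}|\le L_i$ pointwise.
\end{itemize}
Combining the two cases gives $(Z-Z^{(i)})^2\le L_i^2\,\mathds{1}[Y_i\neq Y'_i]$ pointwise.

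Taking expectations, independence of $Y_i$ and $Y'_i$ gives
$$\Pr[Y_i\neq Y'_i]=p_i(1-p_i)+(1-p_i)p_i=2p_i(1-p_i),$$
and hence $\mathbb E[(Z-Z^{(i)})^2]\le 2p_i(1-p_i)L_i^2$. Substituting this into the Efron--Stein bound, the factor $\tfrac12$ cancels the $2$, and summing over $i$ yields \eqref{eq:app3-bernoulli-variance}.

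There is no real obstacle. The only point needing care is to note that the Lipschitz hypothesis is assumed for all $y\in\{0,1\}^m$. This is what allows the pointwise bound in the second case, whatever the values of the coordinates other than the $i$-th.
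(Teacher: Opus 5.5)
Your proof is correct and follows the paper's own argument: apply Efron--Stein, observe that $(Z-Z^{(i)})^2\le L_i^2$ on the event $Y_i\neq Y_i'$ and vanishes otherwise, and use $\Pr(Y_i\neq Y_i')=2p_i(1-p_i)$ so that the factor $\tfrac12$ cancels. Your remarks on square-integrability and the explicit case split just spell out what the paper's two-line proof leaves implicit.
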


\begin{proof}
	Let $Y'_i$ be an independent copy of $Y_i$. Then $\Pr(Y_i\ne Y'_i)=2p_i(1-p_i)$, and so $\mathbb E\bigl[(Z-Z^{(i)})^2\bigr] \leq 2p_i(1-p_i) L_i^2$.
	Thus Proposition~\ref{prop:app3-efron-stein} implies  \eqref{eq:app3-bernoulli-variance}.
\end{proof}	

We are now ready to prove our main technical lemma.

\begin{lemma}\label{lem:app3-one-round}
Let $D > 0$, $0 < a \leq \frac{1}{2r}$ be real, and let $\mc{H}$ be a hypergraph with weights $(x_e)_{e \in E(\mc{H})}$ such that
\begin{equation}\label{eq:app3-round-hypothesis}
  \Delta(\mc{H}) \leq D \qquad \text{and} \qquad \Delta_2(\mc{H})\le\frac{a^3}{64r^2} D.
\end{equation}
Then there exist a matching $M$ in $\mc{H}$
and a subhypergraph $\mc{H}'$ of $\mc{H}$ with weights $(x'_e)_{e \in E(\mc{H}')}$ on a vertex set
$V(\mc{H}')\subseteq V(\mc{H})\setminus V(M)$ with the following properties. \begin{itemize}
	\item $x'_e \leq x_e$ for every $e \in E(\mc{H}')$,
	\item $\Delta(\mc{H}') \leq De^{-a(r-1)}$ 
    and
	\item the set $Z$ of vertices of $\mc{H}$ not covered by either $V(\mc{H}')$ or $V(M)$ satisfies
	\begin{equation}\label{eq:app3-round-conclusion}
		|Z| + 8ar\,\v(\mc{H}')+\deff(De^{-a(r-1)},\mc{H}')	\le 8are^{-a/2}\v(\mc{H}) + \deff(D,\mc{H}). 
	\end{equation} 
\end{itemize}
\end{lemma}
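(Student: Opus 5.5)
We run one round of the Rödl nibble, with the randomness packed into a single existence statement. Include each edge $e$ in a random set $S$ independently with probability $p_e=\frac{a}{D}x_e$; this is at most $1$ once we discard or cap the negligible heavy edges, since $x_e\le\Delta_2\le a^3D/(64r^2)$ when $r\ge2$. Let $M\subseteq S$ be the set of edges of $S$ that meet no other edge of $S$; this is a matching. Write $W$ for the set of vertices covered by $S$. Let $\mc H'$ be the hypergraph on $V(\mc H)\setminus W$ whose edges are those $e\in E(\mc H)$ disjoint from $W$. Give them weights $x'_e=x_e\cdot\theta_e\le x_e$, where $\theta_e$ truncates so that every degree is at most $D':=De^{-a(r-1)}$. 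For instance, delete each vertex $v$ whose new degree exceeds $D'(1+\eta)$ for a small error $\eta\approx a^2$, and rescale by $(1+\eta)^{-1}$. We choose $\eta$ so that the small loss is absorbed by the slack between $e^{-a}$ and $e^{-a/2}$ in \eqref{eq:app3-round-conclusion}.

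Next we compute expectations. For a vertex $v$, $\Pr(v\notin W)=\prod_{e\ni v}(1-p_e)\approx e^{-a d(v)/D}$. For an edge $f\ni v$, the survival probability of $f$ given $v\notin W$ is $\prod_{u\in f\setminus v}\prod_{e\ni u,\,e\not\ni v}(1-p_e)$. Up to codegree corrections of size $O(r\Delta_2 a/D)=O(a^4)$ this is $\ge e^{-a\sum_{u\in f\setminus v}d(u)/D}\ge e^{-a(r-1)}$, and the factor $e^{-a(r-1)}$ itself is an upper bound. Hence the expected new degree of a surviving $v$ is at most about $D'$.

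The deficit bookkeeping is linear. The quantity $\deff(D',\mc H')=\v(\mc H')-\frac{r}{D'}\e(\mc H')$ needs a lower bound on $\mathbb E\,\e(\mc H')$ and an upper bound on $\mathbb E\,\v(\mc H')$. The vertices in $Z$ are those covered by $S$ but not by $M$. Vertices in edges of $M$ are covered; a vertex $v\in W\setminus V(M)$ lies in two overlapping selected edges, which happens with probability $O(a\cdot\Pr(v\in W))$. We then expand everything in terms of $\delta_v:=1-d(v)/D$, using convexity of $x\mapsto e^{-ax}$. The target is the per-vertex inequality
\[
\mathbb E\bigl[\mathbf 1_{v\in Z}+8ar\,\mathbf 1_{v\in V(\mc H')}+(\text{contribution of }v\text{ to }\deff(D',\mc H'))\bigr]\le 8are^{-a/2}+\delta_v,
\]
to be summed over $v$. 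Heuristically, $\Pr(v\in Z)=O(a^2)$. The survival term is $8ar\,e^{-a(1-\delta_v)}\le 8ar\,e^{-a/2}-\Theta(a^2r)$ when $\delta_v$ is small. When $\delta_v$ is large, the $\delta_v$ on the right dominates, because the deficit of $v$ in $\mc H'$ is at most the indicator of survival times roughly $\delta_v$ plus new losses. The constant $8$ is there to beat the $O(a^2r)$ error terms, where $a\le 1/(2r)$ is used. Expectation then gives a realization of $S$ with the left side at most its expectation.

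The main obstacle is the deterministic maximum degree bound $\Delta(\mc H')\le D'$. The expected degree is fine, but we need concentration for every vertex, and the truncation loss must be charged to $\deff$ and to $Z$. The tool for this is Corollary~\ref{cor:variance}: the new degree of $v$ is a function of the independent indicators $\mathbf 1_{e\in S}$. Flipping an edge $e$ changes it by at most $\sum_{u\in e}d(u,v)\le r\Delta_2$, and only edges within distance one of $v$ matter. Therefore
\[
\operatorname{Var}\le\sum_e p_eL_e^2\lesssim r\cdot\frac aD\cdot rD\cdot(r\Delta_2)^2\cdot\frac1{\Delta_2}\cdots=O(a^4D^2/r),
\]
using $\Delta_2\le a^3D/(64r^2)$. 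By Chebyshev, $\Pr(d'(v)>D'(1+\eta))=O(a^4/(\eta^2r))$. Rather than asking for this to hold for all $v$, we apply the truncation described above and take the expected excess $\mathbb E(d'(v)-D')^+\le\sqrt{\operatorname{Var}}$. Each unit of excess removed costs at most $r/D'$ in $\deff$, or moves $v$ into $Z$. Making these second-moment losses fit under the $\Theta(a^2r)$ gain is the delicate calibration, and the hypothesis $\Delta_2\le a^3D/(64r^2)$ is exactly what makes $\sqrt{\operatorname{Var}}/D=O(a^{2})$ small enough to absorb.
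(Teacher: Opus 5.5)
Your construction leaves out the step that makes the bookkeeping close. Besides selecting edges, the paper deletes each vertex $v$ independently with probability $1-\exp\bigl(-a(1-d(v)/D)\bigr)$. This is the same as hitting $v$ with a phantom edge that carries its missing degree $D-d(v)$. As a result every vertex survives with probability exactly $e^{-a}$, whatever its degree.

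Without this step, \eqref{eq:app3-round-conclusion} fails for your construction. Take $\mc H$ with no edges. Then $S=M=\emptyset$, $\mc H'$ keeps all $n$ vertices and $Z=\emptyset$, so the left side equals $n+8ar\,n$ while the right side is only $n+8are^{-a/2}n$. The same failure occurs in expectation whenever most degrees are tiny compared to $D$.

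This is exactly where your per-vertex claim ``when $\delta_v$ is large the $\delta_v$ on the right dominates'' breaks. A vertex with $\delta_v\approx1$ survives with probability $\approx1$. Its deficit $\approx\delta_v$ then reappears on the left and cancels the $\delta_v$ on the right. What remains is $8ar\Pr(v\text{ survives})\approx 8ar>8are^{-a/2}$.

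In the paper the whole slack is $8ar(e^{-a/2}-e^{-a})n\ge\frac32 ra^2n$. It exists only because $\mathbb E\,\v(\mc H')=e^{-a}n$ exactly. That slack pays for the trimming loss, which is at most $\frac{ra^2}{2}n$ after multiplying by $r/D'$. It also pays for the loss $\frac rD(1-ae^{-ar}-e^{-a})\,\e(\mc H)\le ra^2n$ coming from the matching and from edge survival.

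The same omission breaks your degree estimate. You assert that $e^{-a(r-1)}$ is an upper bound on the conditional survival probability of an edge $f\ni v$. Without vertex deletion, however, each $u\in f\setminus\{v\}$ survives with probability about $e^{-ad(u)/D}$, which is close to $1$ when $d(u)$ is small. For example, suppose $v$ has degree $D$ and every edge through $v$ otherwise consists of private vertices. Then, given that $v$ survives, all its edges survive, and its degree stays $D\gg D'$.

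With the vertex deletion in place, write $U$ for the set of vertices neither covered by a selected edge nor deleted. One then has the exact formula $\Pr(S\subseteq U)=\exp\bigl(-a|S|+\frac aD\sum_e x_e(|e\cap S|-1)_+\bigr)$. This gives both $\Pr(e\subseteq U)\ge e^{-ar}$ and, conditionally on $v\in U$, an expected surviving degree of at most $D'+ar^2\Delta_2$.

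Your Efron--Stein variance bound then goes through once you also include the vertex indicators, which contribute $a\sum_{u}(1-d(u)/D)\,d(v,u)^2$. Combined with the weight-reduction trimming, whose removed weight is at most $\sum_{v\in U}(d(v)-D')_+$ with $d$ the degree among surviving edges, this reproduces the paper's proof.

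Your first truncation rule, deleting vertices whose degree exceeds $D'(1+\eta)$ with $\eta\approx a^2$, would not fit the budget. Since $\sqrt{\operatorname{Var}}$ is of order $a^2D$, Chebyshev only bounds the deletion probability by a constant. Each such deletion costs order $r$ in \eqref{eq:app3-round-conclusion}, while the total budget is $O(ra^2n)$.
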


\begin{proof}[Proof of Lemma~\ref{lem:app3-one-round}]
Let $n = \v(\mc{H})$, $D' = De^{-a(r-1)}, \Delta_2 = \Delta_2(\mc{H})$ and write $d(v)$ and $d(u,v)$ instead of $d_{\mc{H}}(v)$ and $d_{\mc{H}}(u,v)$ for brevity.

We select $M$ and $\mc{H}'$ at random as follows.
Let $M_0 \subseteq E(\mc{H})$ be a set of hyperedges selected independently at random with
\[
  \Pr (e \in M_0)  = 1-\exp\left(-\frac{ax_e}{D}\right),
\]
for every $e \in E(\mc{H})$,
and let $Z_0 \subseteq V(\mc{H})$ be a set of vertices 
selected independently at random, and independently from $M_0$, with 
\begin{equation}\label{eq:Z0}
  \Pr (v  \in Z_0)=   1-\exp\left(-a\left(1-\frac{d(v)}D\right)\right) 
\end{equation}
for every $v \in V(\mc{H})$. Thus the hyperedges with high weight are more likely to be selected in $M_0$, while the vertices of low weighted degree are more likely to be selected in $Z_0$.

Let $M$ be the set of hyperedges in $M_0$ which are disjoint from all the remaining  hyperedges in $M_0$.
Let $U = V(\mc{H}) - (V(M_0) \cup Z_0)$. Let $\mc{H}_0$ be the subhypergraph  of $\mc{H}$ induced by $U$
retaining the edge weight in $\mc{H}$. We obtain $\mc{H}'$ from $\mc{H}_0$ by reducing edge weights to ensure $\Delta(\mc{H}') \leq D'$, as follows. While there exists a vertex $v$ such that $d_{\mc{H}'}(v) > D'$, we reduce the weights of hyperedges incident to $v$ until we reduce $d_{\mc{H}'}(v)$ to $D'$. The order in which we consider the vertices and hyperedges incident to a vertex is irrelevant.

This finishes the description of the random objects $M$ and $\mc{H}'$. Clearly, they satisfy all the properties required by the lemma except possibly \eqref{eq:app3-round-conclusion}. We will show that  
\begin{equation}\label{eq:app3-E}
		\bb{E} \s{ |Z| + 8ar\,\v(\mc{H}')+\deff(D',\mc{H}')}	\le 8are^{-a/2}n + \deff(D,\mc{H}), 
	\end{equation} 
which immediately implies existence of choices of $M$ and $\mc{H}'$ satisfying the lemma.

First, however, let us  estimate the probabilities that a set of vertices survives in $U$.
For every set $S\subseteq V(\mc{H})$, independence gives
\begin{equation}\label{eq:app3-joint-survival}
\begin{aligned}
 \Pr(S\subseteq U) &=
  \prod_{v\in S}\exp\left(-a\left(1-\frac{d(v)}D\right)\right)
  \prod_{e:\,e\cap S\ne\varnothing}
      \exp\left(-\frac{ax_e}D\right)\\
 &=
  \exp\left(-a|S|+\frac aD
       \left(\sum_{v\in S}d(v)
             -\sum_{e:\,e\cap S\ne\varnothing}x_e\right)\right)\\
 &=
  \exp\left(-a|S| +\frac aD
       \sum_e x_e(|e\cap S|-1)_+\right).
\end{aligned}
\end{equation}

By \eqref{eq:app3-joint-survival} we have $\Pr (S\subseteq U) \geq e^{-a|S|}$ for every $S\subseteq V(\mathcal H)$ and $\Pr(s\in U)=e^{-a}$ for every $s\in V(\mathcal H)$. This implies 
\begin{equation}\label{eq:app3-survival-moments}
	\mathbb E\v(\mc{H}') = \mathbb E|U|=e^{-a}n,~~
	\Pr(e\subseteq U)\ge e^{-ar} \text{ for every }e\in E(\mathcal H) \text{ and so } \bb{E}\e(\mc{H}_0)\geq e^{-ar}\e(\mc{H}).
\end{equation}

As $(j-1)_+\le\binom j2$ for integers $j\ge0$, we also have
\begin{equation*}
\begin{aligned}
  \sum_e x_e(|e\cap S|-1)_+
  \le\sum_e x_e\binom{|e\cap S|}{2}
  =\sum_{\{u,v\}\subseteq S}d(u,v)
   \le \frac{|S|^2 \Delta_2}{2},
\end{aligned}
\end{equation*}
and thus 
\begin{equation}\label{eq:app3-survival-correction}
\Pr(S\subseteq U) \leq   \exp\left( -a |S| + \frac {a|S|^2 \Delta_2}{2D}
\right)
\end{equation}
for every $S\subseteq V(\mathcal H)$. Let $W = \e(\mc{H}_0) - \e(\mc{H}')$ be the total weight removed in the process of reducing the edge weights of $\mc{H}_0$ to reach $\mathcal{H}'$.

We will establish \eqref{eq:app3-round-conclusion} by proving the following claims.

\begin{claim}\label{claim:M}
\begin{equation}\label{eq:app3-M}
\bb{E} |M| \geq \frac{ae^{-ar}}{D}\e(\mc{H}).
\end{equation}
\end{claim}

\begin{claim}\label{claim:W}
	\begin{equation}\label{eq:app3-normalized-trimming}
			\mathbb  E W
			\le\frac{D'a^2}{2} n.
\end{equation}
\end{claim}	

Before proving these claims let us derive \eqref{eq:app3-E} (and thus the lemma) from them.
Expanding the left side of  \eqref{eq:app3-E} we obtain
\begin{align*} 
\bb{E} &\s{ |Z| + 8ar\,\v(\mc{H}')+\deff(D',\mc{H}')}\stackrel{\eqref{eq:app3-deficit},\eqref{eq:app3-survival-moments}}{=} \bb{E}|Z| + 8are^{-a}n +  \bb{E}\v(\mc{H}') - \frac{r}{D'}\bb{E}\s{\e(\mc{H}')} \\ &=  (n - r\bb{E}|M|- \bb{E}\v(\mc{H}')) + 8are^{-a}n +  \bb{E}\v(\mc{H}') - \frac{r}{D'}\bb{E}\s{\e(\mc{H}')} \\&=  n + 8are^{-a}n  - r\bb{E}|M| - \frac{r}{D'}\bb{E}\s{\e(\mc{H}_0) - W} \\ & \stackrel{\eqref{eq:app3-survival-moments},\eqref{eq:app3-M},\eqref{eq:app3-normalized-trimming}}{\leq} n + 8are^{-a}n - r \frac{ae^{-ar}}{D}\e(\mc{H}) - \frac{r}{D'}  e^{-ar}\e(\mc{H}) + \frac{ra^2}{2} n.
\end{align*}
The right side of  \eqref{eq:app3-round-conclusion} is equal to
$$ 8are^{-a/2}n + n - \frac{r}{D}\e(\mc{H}).$$
Thus, substituting $D' = De^{-a(r-1)}$ it suffices to show that
\begin{equation}\label{eq:final}
   \frac{r}{D} \s{1 - ae^{-ar}-e^{-a}} \e(\mc{H}) \leq \s{8ar(e^{-a/2}-e^{-a}) -\frac{ra^2}{2}}n
\end{equation}

As \begin{equation}\label{eq:aux} 8(e^{-a/2}-e^{-a}) = 8e^{-a}(e^{a/2}-1)\geq 8 \frac{a}{2}e^{-a} \geq \frac{3}{2}a, \end{equation}
(the last estimate is lossy), the right side of \eqref{eq:final} is lower-bounded by $ra^2n$. As $\e(\mc{H}) \leq \frac{D}{r} n$, \eqref{eq:final} is implied by
\begin{equation}\label{eq:final2}
  1 - ae^{-ar}-e^{-a}\leq 1 - (1-a) - a(1-ar) = ra^2.
\end{equation}
This establishes \eqref{eq:final2}. It remains to prove Claims~\ref{claim:M} and \ref{claim:W}.

\begin{proof}[Proof of Claim~\ref{claim:M}]
For each edge $e$ we have
\[
\begin{aligned}
  \Pr(e\in M) &=
  \left(1-e^{-ax_e/D}\right)
  \prod_{\substack{f\ne e\\f\cap e\ne\varnothing}}
  e^{-ax_f/D}
 \\ & =
  \left(1-e^{-ax_e/D}\right)
  \exp\!\left(-\frac aD
  \sum_{\substack{f\ne e\\f\cap e\ne\varnothing}}x_f\right)
 \\ &=\left(e^{ax_e/D}-1\right)
       \exp\left(-\frac aD\sum_{f:\,f\cap e\ne\varnothing}x_f\right)
  \\ &\ge\frac{ax_e}{D}e^{-ar},
\end{aligned}
\]
since $e^y-1\ge y$ for $y\ge0$ and
$\sum_{f:\,f\cap e\ne\varnothing}x_f
 \le\sum_{u\in e}d(u)\le rD$.
Summing over all $e \in E(\mc{H})$ gives~\eqref{eq:app3-M}.
\end{proof}

\begin{proof}[Proof of Claim~\ref{claim:W}]

This proof is the most technical part of the argument.
Clearly	
\begin{equation}\label{eq:app3-trimming-potential}
  W\le\sum_{v\in U}(d_{\mc{H}_0}(v)-D')_+.
\end{equation}

We estimate the right-hand side by fixing a vertex $v$ and
conditioning on the event $v \in U$. Until we return to $W$ below, all
probabilities, expectations, and variances are under
this conditioning. Let $I_e$ be $1$ if $e\subseteq U$ and $0$
otherwise, and put
\[
  X_v=d_{\mc{H}_0}(v)=\sum_{e\ni v}x_eI_e.
\]
By \eqref{eq:app3-survival-correction} we have
\[
  \mathbb E I_e
  \le  \exp\left( -a (r-1)+ \frac {ar^2 \Delta_2}{2D}\right)
\]
As $\frac {ar^2 \Delta_2}{2D}\le1/2$ by \eqref{eq:app3-round-hypothesis} 
and
$e^y \le 1+ye^y \le1+ 2 y$ for $0\le y\le1/2$, we have
\begin{equation}\label{eq:app3-degree-mean}
\begin{aligned}
  \mathbb E X_v
  &\le d(v)e^{-a(r-1)}\left(1+\frac{ar^2\Delta_2}{D}\right)\\
  &\le d(v)e^{-a(r-1)}+ar^2\Delta_2
   \le D'+ar^2\Delta_2,
\end{aligned}
\end{equation}
where we also used $d(v)\le D$, $e^{-a(r-1)}\le1$, and $D'=De^{-a(r-1)}$.

To upper bound the variance of $X_v$ we apply Corollary~\ref{cor:variance}.
Conditioning on $v\in U$  ensures that $v \not \in Z_0$ and $e \not \in M_0$ for every $e \ni v$. 
The random variable $X_v$ is thus a function of independent random variables $\mathds{1}_{u \in Z_0}$ for $u \neq v$ and  $\mathds{1}_{e \in M_0}$ for $e \not \ni v$. Changing $\mathds{1}_{u \in Z_0}$
changes $X_v$ by at most $d(v,u)$. Changing $\mathds{1}_{e \in M_0}$
changes $X_v$ by at most
\[
  \sum_{\substack{f\ni v\\f\cap e\ne\varnothing}}x_f
  \le \sum_{u\in e}d(v,u).
\]
Since $1-e^{-y}\le y$ for $y\ge0$, we get from \eqref{eq:Z0} that $$ \mathbb E\mathds{1}_{u \in Z_0} \leq a\left(1-\frac{d(u)}D\right)$$
and we similarly have $$\mathbb E\mathds{1}_{e \in M_0} \leq\frac{ax_e}{D}. $$
Plugging these and the estimates above into \eqref{eq:app3-bernoulli-variance} we obtain
\begin{equation}\label{eq:app3-degree-variance}
\begin{aligned}
  \operatorname{Var}X_v
  &\le a\sum_{u\ne v}\left(1-\frac{d(u)}D\right)(d(v,u))^2
       +\frac aD\sum_{e\not\ni v}x_e
          \left(\sum_{u\in e}d(v,u)\right)^2\\
  &\le a\sum_{u\ne v}\left(1-\frac{d(u)}D\right)(d(v,u))^2
       +\frac{ar}D\sum_{u\ne v}d(u)(d(v,u))^2\\
  &\le ar\sum_{u\ne v}(d(v,u))^2
   \le ar\Delta_2\sum_{u\ne v}d(v,u)\\
  &=ar(r-1)\Delta_2d(v)\le ar^2\Delta_2D.
\end{aligned}
\end{equation}
The second line uses Cauchy--Schwarz and
$\sum_{e\ni u,\,e\not\ni v}x_e\le d(u)$.
The last two lines use $d(u)\le D$, $d(v,u) \le \Delta_2$, and
$\sum_{u\ne v}d(v,u)=(r-1)d(v)$.
Using Cauchy-Schwarz again, together with
\eqref{eq:app3-degree-mean} and~\eqref{eq:app3-degree-variance},
we obtain
\begin{align*}
  \mathbb E(X_v-D')_+ &\le \mathbb E|X_v-\mathbb E X_v| + (\mathbb E X_v-D')_+ \\ &\leq \sqrt{\operatorname{Var}X_v}+(\mathbb E X_v-D')_+  \\ &\leq r\sqrt{a\Delta_2D} + ar^2\Delta_2.
\end{align*}
We now remove the conditioning on $v\in U$. Each vertex belongs
to $U$ with probability $e^{-a}$, so~\eqref{eq:app3-trimming-potential}
implies
\begin{align*}\label{eq:app3-expected-trimming}
  \mathbb EW &\le e^{-a}n\bigl(r\sqrt{a\Delta_2D}+ar^2\Delta_2\bigr) \le e^{-a}a^2 Dn \s{\frac{1}{8} + \frac{a^2}{64}} \\ &\leq \frac{1}{4} e^{-a} a^2 Dn \leq \frac{1}{2} a^2 D'n,
\end{align*}
where the second inequality is obtained by substituting an upper bound on $\Delta_2$ from \eqref{eq:app3-round-hypothesis} and the last inequality holds as $e^{-a}D = e^{a(r-2)}D' \leq e^{1/2}D' \leq 2D'.$ Thus \eqref{eq:app3-normalized-trimming} holds, as desired.
\end{proof}
\end{proof}

Iteratively applying Lemma~\ref{lem:app3-one-round} we obtain a matching covering almost all the vertices of $\mc{H}$ under appropriate conditions.

\begin{lemma}
\label{prop:app3-matching}
Let $D>0$ be real, $r\ge 2$ an integer, and let $\mc{H}$ be an edge-weighted $r$-uniform hypergraph with
$\Delta(\mc{H})\le D$.
Then $\mc{H}$ contains a matching leaving at most
\[
  \deff(D,\mc{H})
  +\left(\frac{2^{15}r^5 \Delta_2(\mc{H})}{D}\right)^{\frac{1}{2r}}\v(\mc{H})
\]
vertices uncovered.
\end{lemma}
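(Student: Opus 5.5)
We iterate Lemma~\ref{lem:app3-one-round} with a fixed step size $a$, to be chosen below, for a suitable number $T$ of rounds. Write $\Delta_2=\Delta_2(\mc{H})$. If $\Delta_2=0$ there is nothing essential to prove; in that case we may simply let $a$ be tiny, or use a limiting argument. We may also assume that the error term $\varepsilon:=\left(2^{15}r^5\Delta_2/D\right)^{1/(2r)}$ is less than $1$, since otherwise the bound exceeds $\v(\mc{H})$ and the statement is trivial.

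Set $\mc{H}_0=\mc{H}$ and $D_0=D$, and put $D_i=De^{-ia(r-1)}$. At round $i$ we apply the lemma to $\mc{H}_i$ with parameter $D_i$. This produces a matching $M_i$, a discarded set $Z_i$, and a new hypergraph $\mc{H}_{i+1}$ whose weights are pointwise at most those of $\mc{H}_i$. In particular $\Delta_2(\mc{H}_{i+1})\le\Delta_2$, while $\Delta(\mc{H}_{i+1})\le D_{i+1}$. The vertex sets are nested, and $V(\mc{H}_{i+1})$ avoids $V(M_i)$. Consequently $M=\bigcup_i M_i$ is a matching, and every vertex it leaves uncovered lies in $\bigcup_i Z_i\cup V(\mc{H}_T)$.

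Define the potential $\Phi_i=8ar\,\v(\mc{H}_i)+\deff(D_i,\mc{H}_i)$. Inequality~\eqref{eq:app3-round-conclusion} rewrites as
\[
|Z_i|+\Phi_{i+1}\le \Phi_i-8ar(1-e^{-a/2})\v(\mc{H}_i).
\]
Summing over $i<T$ and telescoping gives
\[
\sum_{i<T}|Z_i|+\v(\mc{H}_T)\le \deff(D,\mc{H})+8ar\,\v(\mc{H})+(1-8ar)\v(\mc{H}_T),
\]
where we used $\deff\ge0$. To control $\v(\mc{H}_T)$, the telescoped inequality also shows that $8ar\,\v(\mc{H}_{i+1})\le 8are^{-a/2}\v(\mc{H}_i)+(\text{deficit change})$. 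From this, $8ar\,\v(\mc{H}_T)\le 8are^{-aT/2}\v(\mc{H})+\deff(D,\mc{H})$ up to the deficits, since those are nonnegative and telescope. In the simpler route we just bound $\v(\mc{H}_T)\le\v(\mc{H})e^{-aT/2}+\deff/(8ar)$.

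Putting these together, the number of uncovered vertices is at most
\[
\deff(D,\mc{H})\bigl(1+O(1/(ar))\bigr)+\bigl(8ar+e^{-aT/2}\bigr)\v(\mc{H}).
\]
The $1/(ar)$ factor on the deficit is unacceptable, and the main obstacle is to avoid it. The fix is to avoid bounding $\v(\mc{H}_T)$ separately. Instead, we stop at the final round and note that the uncovered vertices are exactly $\bigcup Z_i\cup V(\mc{H}_T)$, so the quantity we need is $\sum|Z_i|+\v(\mc{H}_T)$. Since $\deff(D_T,\mc{H}_T)\ge \v(\mc{H}_T)-\tfrac{r}{D_T}\e(\mc{H}_T)$, the final potential already contains $\v(\mc{H}_T)$ up to the term $\tfrac{r}{D_T}\e(\mc{H}_T)\le \tfrac{r}{D_T}\e(\mc{H}_0)\le e^{Ta(r-1)}\v(\mc{H})$. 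A cleaner alternative is to run until $\v(\mc{H}_T)$ is small, comparing directly via $|Z|+\v(\mc{H}')\le|Z|+\deff(D',\mc{H}')+\tfrac{r}{D'}\e(\mc{H}')$. I would try to make one of these bookkeeping variants give a total bound of $\deff(D,\mc{H})+(8arT\text{-type term}+e^{-aT/2})\v(\mc{H})$.

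For the hypothesis of the lemma at round $i$ we need $\Delta_2\le \tfrac{a^3}{64r^2}D_i=\tfrac{a^3}{64r^2}De^{-ia(r-1)}$. This limits the number of rounds to $T\approx \log\!\bigl(a^3D/(64r^2\Delta_2)\bigr)/(a(r-1))$. The final error is of order $ar+e^{-aT/2}\approx ar+\bigl(64r^2\Delta_2/(a^3D)\bigr)^{1/(2(r-1))}$. Choosing $a\approx(\Delta_2/D)^{1/(2r)}$ up to polynomial factors in $r$ balances the two terms and yields $\left(2^{15}r^5\Delta_2/D\right)^{1/(2r)}\v(\mc{H})$; the constants $2^{15}$ and $r^5$ then come from this optimisation. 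We must also check that $a\le 1/(2r)$, which follows from $\varepsilon<1$.
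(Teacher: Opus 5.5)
There is a genuine gap, at exactly the point you call the main obstacle: you never obtain a valid bound for the leftover term $\v(\mc{H}_T)$. With a fixed step size $a$, Lemma~\ref{lem:app3-one-round} cannot supply one, because it controls only the combination $8ar\,\v(\mc{H}')+\deff(D',\mc{H}')$, not $\v(\mc{H}')$ by itself. The deficit can absorb the entire potential drop. For instance, for $a\le 1/(8r)$, if all degrees of $\mc{H}$ are already at most $De^{-a(r-1)}$, then $M=\emptyset$, $\mc{H}'=\mc{H}$ is a legitimate output.

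Consequently the geometric decay $\v(\mc{H}_T)\lesssim e^{-aT/2}\v(\mc{H})$ behind your final optimisation is unjustified, and your fixes do not provide it. The bound $\tfrac{r}{D_T}\e(\mc{H}_T)\le e^{Ta(r-1)}\v(\mc{H})$ is weaker than the trivial bound $\v(\mc{H}_T)$. And ``run until $\v(\mc{H}_T)$ is small'' presupposes the missing decay.

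Done honestly, the only thing that can pay for the leftover $\v(\mc{H}_T)$ vertices is the savings term you dropped, $8ar(1-e^{-a/2})\sum_{i<T}\v(\mc{H}_i)\ge 8ar(1-e^{-a/2})\,T\,\v(\mc{H}_T)$. Equivalently, the coefficient of $\v$ in your potential may grow by only about $4a^2r$ per round, and it must reach $1$ by the end. With $a$ fixed this needs $T\gtrsim 1/(4a^2r)$ rounds, so $D_T\le De^{-(r-1)/(4ar)}$. The codegree hypothesis then forces $a\gtrsim 1/\log(D/\Delta_2)$, giving an error of order $r/\log(D/\Delta_2)$ instead of a power of $\Delta_2/D$. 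Even taken at face value, your balancing of $ar$ against $(64r^2\Delta_2/(a^3D))^{1/(2(r-1))}$ gives exponent $1/(2r+1)$, not $1/(2r)$.

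The missing idea is to make the step size proportional to the current error coefficient. The paper normalises $\Delta_2(\mc{H})=1$ and proves the bound $\deff(D,\mc{H})+\eps\,\v(\mc{H})$, with $\eps=(2^{15}r^5/D)^{1/(2r)}$, by induction on $\lceil D\rceil$. It chooses $a=\eps/(8r)$, so that $8ar=\eps$ exactly. The base case $\eps\ge 1$ is handled by the empty matching, and this is where the leftover vertices are finally paid for.

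For $\eps<1$ the paper checks two things. First, $D\ge 2^{15}r^5/\eps^{3}=64r^2/a^3$, which is the codegree hypothesis. Second, $D-De^{-a(r-1)}\ge aD/2\ge 1$, so the induction is well founded. Then one round of Lemma~\ref{lem:app3-one-round} is followed by the induction hypothesis for $\mc{H}'$ at $D'=De^{-a(r-1)}$, whose coefficient is $\eps e^{a(r-1)/(2r)}$. Together they leave at most $\eps\bigl(e^{-a/2}+e^{a(r-1)/(2r)}-1\bigr)\v(\mc{H})+\deff(D,\mc{H})$ vertices uncovered, and $e^{-a/2}+e^{a(r-1)/(2r)}\le 2$ for $a<1/(8r)$.

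So the coefficient grows multiplicatively while $D$ shrinks by the factor $e^{-a(r-1)}$. This is exactly the scaling $\eps\propto D^{-1/(2r)}$ that produces the exponent $1/(2r)$, which no fixed-$a$ bookkeeping can reach.
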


\begin{proof}
If $\Delta_2(\mc{H})=0$, then all edge weights are zero, so
$\deff(D,\mc{H})=\v(\mc{H})$ and the empty matching suffices. 
Otherwise, by rescaling we may assume $\Delta_2(\mc{H})  = 1$. We prove that every hypergraph $\mc{H}$  with
$\Delta(\mc{H})\le D$ and $\Delta_2(\mc{H}) \leq 1$ contains a matching   leaving at most
\[
  \deff(D,\mc{H})
  +\left(\frac{2^{15}r^5}{D}\right)^{\frac{1}{2r}}\v(\mc{H})
\]
vertices uncovered by induction on $\lceil D\rceil$.

Let
\[
  \varepsilon=\left(\frac{2^{15}r^5}{D}\right)^{\frac{1}{2r}}.
\]
If $\varepsilon\ge1$, the empty matching suffices.
Otherwise, let  $a=\frac{\varepsilon}{8r}$. 
We have
\[
 D
  =\frac{2^{15}r^5}{\varepsilon^{2r}}
  \ge\frac{2^{15}r^5}{\varepsilon^3}
  =\frac{64r^2}{a^3}.
\]
Moreover,
\[
  D- e^{-a(r-1)}D \ge(1-e^{-a})D \ge \frac{aD}{2}\ge 1.
\]

Apply Lemma~\ref{lem:app3-one-round} to $\mc{H}$ to obtain a matching $M$ in $\mc{H}$ and a subhypergraph $\mc{H}'$ of $\mc{H}$
with $V(\mc{H}')\subseteq V(\mc{H})\setminus V(M)$, $\Delta(\mc{H}') \leq De^{-a(r-1)}$, $\Delta_2(\mc{H}') \leq \Delta_2(\mc{H}) \leq 1$, 
such that at most
$$
 \eps e^{-a/2}\v(\mc{H}) + \deff(D,\mc{H}) - \eps\,\v(\mc{H}')- \deff(De^{-a(r-1)},\mc{H}')
$$
vertices of $\mc{H}$ are covered by neither $M$ nor $\mc{H}'$.
The induction hypothesis, applied to $\mc{H}'$ with degree bound
$e^{-a(r-1)}D$, gives a matching in $\mathcal {H}'$ leaving
at most
$$ 
  \left(\frac{2^{15}r^5}{De^{-a(r-1)}}\right)^{\frac{1}{2r}}\v(\mc{H}')+\deff(De^{-a(r-1)},\mc{H}') = \eps e^{\frac{a(r-1)}{2r}}\v(\mc{H}')+\deff(De^{-a(r-1)},\mc{H}') $$ vertices of $\mc{H}'$ uncovered.
Its union with $M$ is a matching in $\mc{H}$ leaving at most
$$  \eps e^{\frac{a(r-1)}{2r}}\v(\mc{H}') -\eps \v(\mc{H}') + \eps e^{-a/2} \v(\mc{H}) + \deff(D,\mc{H}) \leq \eps\s{e^{\frac{a(r-1)}{2r}} + e^{-a/2} -1}  \v(\mc{H}) + \deff(D,\mc{H}) 
$$
vertices uncovered.
It therefore suffices to show that 
\[
  e^{-a/2}+e^{\frac{a(r-1)}{2r}}\le2.
\]
As $e^{-t}\le 1-t+t^2/2$ for $t\ge0$ and
$e^t \le 1+t+t^2$ for $0\le t\le1/2$, we obtain
\[
\begin{aligned}
  e^{-a/2}+e^{\frac{a(r-1)}{2r}}
  &\le
  \left(1-\frac a2+\frac{a^2}{8}\right)
  +\left(1+\frac{a(r-1)}{2r}+\frac{a^2}{4}\right)=2-\frac{a}{2r}+\frac{3a^2}{8}
  \le2,
\end{aligned}
\]
where the last inequality follows from $a<\frac{1}{8r}$.
\end{proof}

\begin{proof}[Proof of Theorem~\ref{thm:pippenger}]
We prove the conclusion for every $0<\mu\le \frac{(\xi/2)^{2r}}{ 2^{15}r^5}$.
This includes $\mu=(\xi/100)^{20r}$,
since $0<\xi\le1$ and $100^{20r}\ge2^{2r+15}r^5$ for $r\ge2$.

By hypothesis, $\Delta(\mc{H})\le1$, $\Delta_2(\mc{H})\le\mu$,
and
\[
  \deff(1,\mc{H})
  =\sum_{v\in V(\mc{H})}(1-d_{\mc{H}}(v))
  \le\mu\v(\mc{H}).
\]
Apply Lemma~\ref{prop:app3-matching} with $D=1$.
Since $2^{15}r^5 \mu\le(\xi/2)^{2r}$, it gives a matching leaving at most
\[
  \deff(1,\mc{H})+(2^{15}r^5\mu)^{\frac{1}{2r}}\v(\mc{H})
  \le\left(\mu+\frac{\xi}{2}\right)\v(\mc{H})
  \le\xi\v(\mc{H})
\]
vertices uncovered. 
\end{proof}

\begingroup
\hypersetup{hidelinks}
\section{Proof of Theorem~\ref{thm:DPinsep}}\label{app:DPinsep}
\theoremstyle{definition}
\newtheorem{dpdefinition}[theorem]{Definition}
\theoremstyle{plain}
\newtheorem{dpclaim}{Claim}[theorem]

\definecolor{DPchange}{RGB}{0,90,40}
\definecolor{DPdelete}{RGB}{165,35,30}
\newcommand{\DPnew}[1]{\begingroup\color{DPchange}#1\endgroup}
\newenvironment{DPaddition}{\par\color{DPchange}}{\par}
\newsavebox{\DPstrikebox}
\newcommand{\DPstrike}[1]{\begingroup
 \sbox{\DPstrikebox}{#1}\leavevmode
 \rlap{\raisebox{.5ex}{\rule{\wd\DPstrikebox}{.45pt}}}%
 \usebox{\DPstrikebox}\endgroup}
\ExplSyntaxOn
\seq_new:N \l_dp_deleted_words_seq
\NewDocumentCommand{\DPdel}{m}
 {\group_begin:\color{DPdelete}
  \seq_set_split:Nnn \l_dp_deleted_words_seq {~} {#1}
  \seq_map_indexed_inline:Nn \l_dp_deleted_words_seq
   {\int_compare:nNnT {##1}>{1}{\space}\DPstrike{##2}}
  \group_end:}
\ExplSyntaxOff
\newcommand{\DPdelmath}[1]{\text{\color{DPdelete}\DPstrike{\ensuremath{#1}}}}

The proof adapts the proofs of~\cite[Theorems 1.6 and 7.1]{DP} with minimal changes.
We first record the necessary definitions and statements used in the proofs.

\subsection{Definitions and results from~\cite{DP}}

We start by introducing the needed additional notation.

We denote by $d(G)=\e(G)/\v(G)$ the density of a non-empty graph $G$. We use
$\kappa(G)$ to denote the (vertex) connectivity of $G$.
 
We say that vertex-disjoint subgraphs $H$ and $H'$ of a graph $G$ are
\emph{adjacent} if there exists an edge of $G$ with one end in $V(H)$ and the
other end in $V(H')$. For a positive integer $n$, let $[n]$ denote
$\{1,2,\ldots,n\}$. A collection $\mathcal X=\{X_1,X_2,\ldots,X_h\}$
of pairwise vertex-disjoint subgraphs of $G$ is a \emph{model} of a graph
$H$ in a graph $G$ if $X_i$ is connected for every $i\in[h]$, and there
exists a bijection $\phi:V(H)\to[h]$ such that $X_{\phi(u)}$ and
$X_{\phi(v)}$ are adjacent for every $uv\in E(H)$. It is well-known
and not hard to see that $G$ has an $H$ minor if and only if there exists
a model of $H$ in $G$. 

We write $V(\mathcal X)$ for
$\bigcup_{i\in[h]}V(X_i)$. We say that a \emph{model} $\mathcal X$ as above
is rooted at $S$ for $S\subseteq V(G)$ if $|S|=h$ and
$|V(X_i)\cap S|=1$ for every $i\in[h]$.

\vskip 10pt
We now list the results which are  used in our adaptation  of the proofs of~\cite[Theorems 1.6 and 7.1]{DP}. (Some of the results used in the original proofs are no longer needed.)

\begin{theorem}[{\cite[Theorem 2.3]{DP}}]
\label{lem:small-connected}
There exists an integer $C=C_{\ref{lem:small-connected}}\ge1$ such that
the following holds: Let $t\ge1$ be an integer. For every integer
$k\ge t$, if $G$ is a graph with $d(G)\ge Ck$ and $G$ contains no
$K_t$ minor, then $G$ contains a non-empty $k$-connected subgraph $H$
with $\v(H)\le C^2\cdot t\cdot\log^3t$.
\end{theorem}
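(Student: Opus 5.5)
The plan is to split the statement into two independent steps. First, find a \emph{small} subgraph of $G$ that is still dense. Second, apply Mader's theorem inside that small subgraph to extract a highly connected piece.

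Mader's theorem says that every graph of density at least $2k$ contains a $k$-connected subgraph. So the second step is immediate once we have a subgraph $G_0\subseteq G$ with $d(G_0)\ge 2k$ and $\v(G_0)\le C^2 t\log^3 t$: the $k$-connected subgraph $H\subseteq G_0$ inherits the vertex bound. The whole difficulty is therefore the first step. There we aim to show that if $d(G)\ge Ck$ with $k\ge t$ and $G$ has no $K_t$ minor, then some subgraph $G_0$ has $d(G_0)\ge 2k$ and $\v(G_0)=O(t\log^3 t)$.

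For the first step we will use the expander route. We take a subgraph $G_0$ of $G$ that is minimal with respect to a weakened density condition. A natural choice is the condition $\e(G_0)\ge \frac{C}{2}k\,\v(G_0)\,(1+\frac{1}{\log \v(G_0)})^{-1}$, or more generally a condition whose threshold decays slowly in $\log\v$. This gives a minimal subgraph of density at least $2k$, provided $C$ is large. The minimality will be used twice. It forces a minimum degree of order $Ck$. It also forces \emph{edge expansion}: any vertex set $X$ with $|X|\le \v(G_0)/2$ must send $\Omega(Ck\,|X|/\log \v(G_0))$ edges out. Otherwise, deleting the smaller side would give a proper subgraph still satisfying the density condition, contradicting minimality. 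The slowly decaying threshold is exactly what makes this subtraction argument work.

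Next we use that expanders have large clique minors, via the Krivelevich--Sudakov / Plotkin--Rao--Smith style argument. In an $n$-vertex graph with average degree $\delta$ and expansion of order $\delta/\log n$, one can route many short, vertex-disjoint connecting paths. We grow $t$ branch sets of size $O(\log n)$, which is roughly the diameter of such an expander, and connect them greedily. This yields a $K_t$ minor as soon as $t^2\log^{O(1)} n \lesssim n\delta$, with the remaining $\log$ factors coming from path lengths and the expansion loss. On the other hand $\delta\le O(t\sqrt{\log t})$ by Kostochka--Thomason, since $G$ has no $K_t$ minor, while $\delta\ge Ck\ge Ct$. Combining these, we expect that if $\v(G_0)\ge C^2 t\log^3 t$ then the expander $G_0$ contains a $K_t$ minor. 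This contradiction gives $\v(G_0)\le C^2 t\log^3 t$.

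The main obstacle is this last step. We need the quantitative minor-in-expander bound with exactly the right powers of $\log$, uniformly over the whole range $t\le k\le O(t\sqrt{\log t})$. We also need to check that the density loss in the minimality argument does not cost more than a constant factor. My first attempt will be the greedy path-linking argument: reserve a small random subset of vertices for connections, and use the expansion to show that sets of size $\Omega(n/t)$ are joined by paths of length $O(\log n/\log(\delta/\log n))$ avoiding the already used vertices. If the logarithms do not balance, I would fall back on citing the small dense subgraph lemma from the work of Postle and coauthors directly, and then apply Mader's theorem to the result.
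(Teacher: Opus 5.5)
The paper gives no proof of this statement; it is imported verbatim from Delcourt--Postle. In fact only the connectivity of $H$ is used later: in the proof of Lemma~\ref{lem:DP-woven} the bound $\chi(H_0)\le 7k$ comes from trimming via Theorem~\ref{lem:connectivity}, not from $\v(H_0)$. So for the paper's purposes your Mader step alone would suffice.

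Your outer reduction is sound. With $d=\e/\v$, Mader's theorem turns $d(G_0)\ge 2k$ into a $k$-connected subgraph of $G_0$, which inherits the vertex bound. But the whole content of the statement is your first step, and there the argument has a genuine gap, starting with the expansion claim:
\begin{itemize}
\item With $f(n)=\frac{C}{2}k\,n\log n/(1+\log n)$, minimality only gives $e(X,\overline{X})>f(n)-f(|X|)-f(n-|X|)$. For $|X|=n/2$ this equals $Ck|X|\log 2/\bigl((1+\log(n/2))(1+\log n)\bigr)=\Theta(Ck|X|/\log^2 n)$, not $\Omega(Ck|X|/\log n)$.
\item No other threshold $f(n)=cn\phi(n)$ rescues the subtraction argument. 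Since $G$ may be arbitrarily large with density only $Ck$, $\phi$ must be bounded. The increments $\phi(n)-\phi(n/2)$ therefore cannot all be $\Omega(1/\log n)$, because their sum along $n=2^j$ would diverge.
\item What you get is only edge expansion. Routing paths that avoid already-used vertices needs vertex expansion that survives deletions, and $\Delta(G_0)$ is not controlled.
\end{itemize}

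The decisive step is that a $K_t$-minor-free expander of density at least $Ct$ has at most $C^2t\log^3t$ vertices. You only assert it (``we expect''), and the mechanism you sketch cannot deliver it where it matters.
\begin{itemize}
\item In the range $C^2t\log^3t\le n<\binom t2$ you cannot afford $\binom t2$ vertex-disjoint connecting paths with interior vertices.
\item Almost all adjacencies of the model must instead come from direct edges between branch sets of polylogarithmic size, as in a random graph of edge density of order $t/n$.
\item Your heuristic $t^2\log^{O(1)}n\lesssim n\delta$ is an edge-counting condition of exactly this kind. It is incompatible with greedy vertex-disjoint linking, which needs $t^2\lesssim n$.
\end{itemize}
Proving a sharp minor-in-expander statement with the right powers of $\log$, uniformly for $t\le k\le O(t\sqrt{\log t})$, is the real content of the small-dense-subgraph theorems in the density-increment literature. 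The proposal does not supply it.

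So what you actually establish is ``small-dense-subgraph lemma $+$ Mader $\Rightarrow$ statement''. Your fallback is a legitimate proof: cite that lemma in a form giving a subgraph of density at least $2k$ on $O(t\log^3t)$ vertices whenever $d(G)\ge Ck$, then apply Mader. This matches how the paper treats the statement, namely by citation. The self-contained expander route, as written, does not go through.
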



\begin{theorem}[Kostochka~\cite{Kos82}; {\cite[Theorem 4.3]{DP}}]
\label{lem:density}
Let $t\ge2$ be an integer. Then every graph $G$ with
$d(G)\ge30\cdot t\cdot\sqrt{\log t}$ has a $K_t$ minor.
\end{theorem}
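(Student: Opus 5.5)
We would prove Theorem~\ref{lem:density} by the classical minor-minimality argument of Mader, followed by a random construction of branch sets in a dense neighbourhood, in the spirit of Kostochka and Thomason. Set $D:=30t\sqrt{\log t}$. The first step reduces to a small, dense, highly clustered minor. Since $d(G)\ge D$, among all minors $H$ of $G$ with $\e(H)\ge D\,\v(H)$ we choose one that is minimal under deletion and contraction. Minimality gives three facts. First, $\e(H)<D\,\v(H)+1$, because deleting an edge would otherwise keep the inequality. Second, every vertex has degree greater than $D$, because deleting a vertex of degree at most $D$ keeps the inequality. Third, every edge $uv$ lies in at least $D$ triangles: contracting $uv$ loses exactly $1+|N(u)\cap N(v)|$ edges and one vertex, so minimality forces $|N(u)\cap N(v)|\ge D$. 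Averaging degrees, some vertex $x$ has degree at most $2D+1$. Let $F:=H[N(x)]$. Then $\v(F)\le 2D+1$ and $\delta(F)\ge D$, so $F$ has minimum degree at least about $\v(F)/2$. Since $F$ is a subgraph of a minor of $G$, it suffices to find a $K_t$ minor in $F$. We may also use $x$ itself as a universal vertex of $H[N[x]]$, which already supplies one branch set for free.

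The second step finds a $K_t$ minor in a graph $F$ with $n:=\v(F)\le 2D+1$ and $\delta(F)\ge (n-1)/2$. Set $k:=\lceil \sqrt{c\log t}\,\rceil$ with $c$ a suitable constant ($c=4$ should do), and choose $t$ pairwise disjoint random $k$-subsets $A_1,\dots,A_t$ of $V(F)$. This uses $tk\ll n$ vertices, since $n\approx 60t\sqrt{\log t}$. Because each vertex misses at most about half of $V(F)$, two random $k$-sets have no edge between them with probability roughly $2^{-k^2}\le t^{-c\log 2}$. A union bound over the $\binom t2$ pairs then shows that, with positive probability, all but $o(t)$ of the pairs $(A_i,A_j)$ are adjacent. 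We discard one set from each bad pair, keeping $t'\ge t-o(t)$ sets. The lost $o(t)$ sets are recovered by starting from a slightly larger target, replacing $t$ by $(1+o(1))t$, which the constant $30$ easily absorbs.

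The third step, which we expect to be the main obstacle, is making each $A_i$ connected without destroying disjointness. We would reserve a random set $R$ of about $n/2$ vertices, disjoint from all the $A_i$, as connectors. For each $A_i$ we must find at most $k-1$ vertices of $R$ that join the vertices of $A_i$ into a connected set, choosing these vertices disjointly for different $i$. The difficulty is that two nonadjacent vertices of $F$ may have almost disjoint neighbourhoods. To handle this we would exploit the triangle condition in $H$: every edge $xa$ with $a\in N(x)$ lies in at least $D$ triangles, so $\delta(F)\ge D$. Moreover, for $a,b\in N(x)$, the minimality of $H$ applied to contracting $xa$ suggests that the neighbourhoods of $a$ and $b$ in $F$ overlap substantially, which is the usual Mader-type common-neighbour argument. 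If this gives common neighbourhoods of size $\Omega(n)$, then greedily picking common neighbours in $R$, one per pair, connects each $A_i$ using $O(k)$ connectors. In total this uses $O(tk)\ll |R|$ vertices, so disjointness can be maintained greedily. If the overlap cannot be guaranteed, we would fall back to Thomason's alternative: pick the $A_i$ inside a subgraph of $F$ of small diameter, obtained by repeatedly removing low-degree parts. Once the branch sets are connected, pairwise adjacent and disjoint, they form a $K_t$ model in $F$, and hence $G$ has a $K_t$ minor.
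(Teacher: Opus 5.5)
The paper does not prove this statement; it quotes it from Kostochka via Delcourt--Postle. Your sketch therefore has to stand on its own, and it has a genuine gap in the third step, which is where the real difficulty of the theorem lies.

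Your minimality argument gives only two properties of $F=H[N(x)]$: $\delta(F)\ge D$ and $|V(F)|\le 2D+1$. Contraction arguments bound common neighbourhoods of \emph{adjacent} pairs in $H$, and for $a,b\in N(x)$ those common neighbours need not lie in $N(x)$. Nothing forces $N_F(a)\cap N_F(b)$ to be large for non-adjacent $a,b$.

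Nothing you have derived excludes the following $F$: two disjoint cliques $X,Y$ of order $D$ plus a vertex $z$ complete to both. It has $2D+1$ vertices, minimum degree $D=(|V(F)|-1)/2$ and diameter $2$. Yet a random $k$-set meets both $X$ and $Y$ with probability about $1-2^{1-k}$. Every connected set meeting both contains the cut vertex $z$. So at most one of your sets $A_i$ can be completed to a connected branch set, whatever reservoir $R$ you reserve.

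Thus neither the hoped-for neighbourhood overlap nor your fallback (small diameter) suffices. Connecting $t$ sets disjointly needs many short \emph{disjoint} paths, i.e.\ connectivity. One way to supply it is Mader's theorem (average degree at least $4k$ forces a $k$-connected subgraph). Applied to $F$, it gives $F'\subseteq F$ with $\kappa(F')\ge D/4\ge(|V(F')|-1)/8$. By Menger, any two vertices of $F'$ are then joined by $\Omega(|V(F')|)$ internally disjoint paths, a constant fraction of which have bounded length. After redoing the random choice inside $F'$, the $A_i$ can be connected greedily and disjointly. This yields an $O(t\sqrt{\log t})$ bound, but with a constant far larger than $30$; reaching the stated constant needs a more careful argument than your sketch provides.

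The second step also rests on a false estimate. The $k^2$ non-adjacency events for pairs in $A_i\times A_j$ are not independent. Minimum degree about $n/2$ only gives $\Pr[A_j\cap N[A_i]=\emptyset]\approx\mathbb{E}\bigl[(|V(F)\setminus N[A_i]|/n)^k\bigr]$, which can be $2^{-\Theta(k)}$ rather than $2^{-\Theta(k^2)}$. For $F=K_{n/2,n/2}$ it is about $2^{1-2k}$, so with $k=\Theta(\sqrt{\log t})$ you expect $t^{2-o(1)}$ non-adjacent pairs, not $o(t)$.

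This is repairable by using only ``good'' sets $A$, those with $|V(F)\setminus N[A]|\le 2^{-k/2}n$. Most $k$-sets are good by Markov, since $\mathbb{E}|V(F)\setminus N[A]|\le 2^{-k}n$. For a good set, an independent random $k$-set avoids $N[A]$ with probability about $2^{-k^2/2}$. This repair also costs constant factors, so your claim that $30$ absorbs all losses is unverified.
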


\begin{theorem}[Gir\~ao--Narayanan~\cite{GN}; {\cite[Theorem 5.1]{DP}}]
\label{lem:connectivity}
For every positive integer $k$, if $G$ is a graph with $\chi(G)\ge7k$,
then $G$ contains a $k$-connected subgraph $H$ with
$\chi(H)\ge\chi(G)-6k$.
\end{theorem}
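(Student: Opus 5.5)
The plan is to argue by induction on $\v(G)$, working with a graph $H\subseteq G$ that is minimal in a suitable sense and showing that any small separation of it can be cut away cheaply. First I would pass to a vertex-minimal subgraph $G_0\subseteq G$ with $\chi(G_0)=\chi(G)=:c\ge 7k$. Then $G_0$ is vertex-critical, so $\delta(G_0)\ge c-1\ge k$, and every proper induced subgraph of $G_0$ has chromatic number at most $c-1$.

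If $G_0$ is $k$-connected we are done, so suppose $(A,B)$ is a separation of $G_0$ with $S=A\cap B$, $|S|<k$, and $A\setminus S,\,B\setminus S\neq\emptyset$. The basic inequality I will use is
\[
\chi(G_0)\le \max\{\chi(G_0[A\setminus S]),\chi(G_0[B\setminus S])\}+|S|,
\]
obtained by coloring $S$ with $|S|$ fresh colors. Hence one side, say $B\setminus S$, satisfies $\chi(G_0[B\setminus S])\ge c-k$. A second, sharper tool is the torso trick: if we add a clique on $S$ to both $G_0[A]$ and $G_0[B]$, then $\chi(G_0)$ equals the maximum of the chromatic numbers of the two torsos, because colorings of the torsos can be permuted to agree on the clique $S$.

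I will use the torso trick to choose separations well. Among all separations of order $<k$, I would pick one with $A\setminus S$ inclusion-minimal. Criticality then forces the torso of $A$ to carry its share of chromatic number. The plan is to show that the \emph{small} side $A\setminus S$ can be discarded at a loss of at most $|S|<k$ colors, and that this happens only a bounded number of times in a ``charged'' sense. Concretely, I would prove by induction the strengthened statement: every graph $F$ with $\chi(F)\ge 7k$ has a $k$-connected subgraph $H$ with $\chi(H)\ge \chi(F)-6k$. For the induction step, apply the hypothesis to $F'=F[B\setminus S]$ when $\chi(F')\ge \chi(F)-k$ and $\chi(F')\ge 7k$. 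When $\chi(F')$ has dropped below $7k$, we are in the base regime where $\chi(F)<8k$, and there I would invoke a direct Mader-type argument: a graph of average degree at least $4k$ contains a $k$-connected subgraph. This has to be combined with a bound on the chromatic loss from passing to that subgraph, so that the total loss stays within $6k$.

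The main obstacle is that the per-step losses of up to $k$ accumulate across repeated separations. A naive iteration only yields $\chi(H)\ge\chi(G)-k\cdot(\#\text{steps})$, not $\chi(G)-6k$. To overcome this I would first try the torso version of the induction, whose steps lose no chromatic number at all. The cost is that the torso contains fake clique edges on separators, and these must be removed at the end. I would control them by choosing each separation so that every fake clique lies on a set of fewer than $k$ vertices, and by arranging that the final $k$-connected torso $T$ meets the union of fake cliques in a set that is itself colorable with about $6k$ colors (e.g.\ by a degeneracy argument using $|S|<k$). Then $\chi(T\cap G)\ge \chi(T)-6k$, and $k$-connectivity must be re-established inside $T\cap G$, possibly via one more application of the Mader-type step, which is the part that needs the most care.
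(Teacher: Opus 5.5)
\textbf{Gap in the proposal.} The paper does not prove this statement. It quotes it from Gir\~ao--Narayanan, via Theorem~5.1 of Delcourt--Postle, and uses it as a black box in the appendix, so your argument has to stand on its own, and as written it does not. The whole difficulty of the theorem is that separations of order $<k$ may have to be cut unboundedly often. You identify this obstacle correctly, but none of your devices resolves it.
\begin{itemize}
\item The induction on $F'=F[B\setminus S]$ only yields $\chi(H)\ge\chi(F)-7k$, so it does not close.
\item The ``base regime'' $7k\le\chi(F)<8k$ is not a base case at all. There you need a $k$-connected subgraph with chromatic number at least $\chi(F)-6k\ge k$, which is already the qualitative core of the theorem. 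Mader's theorem cannot supply this: it produces a $k$-connected subgraph but gives no lower bound on its chromatic number (it may be bipartite).
\item The same objection rules out your final step, where you re-establish $k$-connectivity inside $T\cap G$ by a Mader-type argument.
\end{itemize}

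\textbf{Why the torso route fails.} First, only $\chi(G_0)\le\max\{\chi(T_A),\chi(T_B)\}$ holds, not equality, since torsos are not subgraphs. More seriously, the fake cliques can carry \emph{all} of the chromatic number, so no degeneracy bound on the fake part is available. Here is an example. Let $k\ge 3$ and $c\ge 7k$.
\begin{itemize}
\item Take an independent set $\{x_1,\dots,x_c\}$.
\item For each $i<j$ add a private gadget: a clique $W_{ij}$ on $c-2$ vertices, and a vertex $a_{ij}$ complete to $W_{ij}$ and adjacent to $x_i$.
\item Make $x_j$ complete to $W_{ij}$.
\end{itemize}
In every $(c-1)$-colouring, $a_{ij}$ and $x_j$ both receive the unique colour missing on $W_{ij}$, so $x_i$ and $x_j$ get different colours. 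Hence the graph is $c$-chromatic, and in fact vertex-critical.

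Each gadget is cut off by the $2$-separator $\{x_i,x_j\}$, and each gadget torso is $(c-1)$-colourable. So following torsos of chromatic number $\ge c$ ends at the torso on $\{x_1,\dots,x_c\}$. That torso is a $K_c$ made entirely of fake edges, while the real graph on it is edgeless. The subgraphs that actually witness the theorem are $W_{ij}\cup\{a_{ij},x_j\}$, each a copy of $K_c$ minus an edge; they lie on the discarded sides.

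\textbf{What is missing.} You need a mechanism guaranteeing that the cost $|S|<k$ of passing into one side of a separation is paid only a bounded number of times on the way to the final subgraph. Alternatively, you need an induction hypothesis strong enough to absorb that cost. This is exactly the content of the Gir\~ao--Narayanan theorem, and your proposal names the obstacle without supplying it.
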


Let $G$ be a graph and $A,B\subseteq V(G)$. A path $P=v_1\ldots v_k$
in $G$ is an \emph{$A-B$ path} if $V(P)\cap A=\{v_1\}$ and
$V(P)\cap B=\{v_k\}$. A subset $X\subseteq V(G)$ separates $A$ and
$B$ if every $A-B$ path contains a vertex of $X$.

\begin{theorem}[Menger~\cite{Menger}; {\cite[Theorem 5.2]{DP}}]
\label{lem:menger}
Let $G$ be a graph and $A,B\subseteq V(G)$. Then the minimum number
of vertices separating $A$ from $B$ is equal to the maximum number
of vertex-disjoint $A-B$ paths.
\end{theorem}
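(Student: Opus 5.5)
We prove Menger's theorem (Theorem~\ref{lem:menger}) by the standard induction on the number of edges, following Diestel's textbook proof.

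\emph{Easy inequality.} If there are $p$ vertex-disjoint $A$--$B$ paths, then any separating set $X$ contains a vertex of each of them. Since the paths are disjoint, these vertices are distinct, so $|X|\ge p$. Hence the maximum number of disjoint paths is at most the minimum size of a separator.

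\emph{Reverse inequality: setup.} Let $k$ denote the minimum size of an $A$--$B$ separator in $G$. We show by induction on $\e(G)$ that $G$ contains $k$ disjoint $A$--$B$ paths. If $G$ has no edges, then the only $A$--$B$ paths are the trivial paths at vertices of $A\cap B$. Also, $A\cap B$ itself separates, so $k=|A\cap B|$ and the trivial paths suffice. Otherwise, fix an edge $e=xy$.

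\emph{Contract $e$.} Form $G/e$, writing $v_e$ for the contracted vertex. Put $v_e$ into $A$ (respectively $B$) whenever $x$ or $y$ lies in $A$ (respectively $B$). If $G/e$ has $k$ disjoint $A$--$B$ paths, we lift them to $G$. Only the path through $v_e$ (if any) needs care, and it lifts by replacing $v_e$ with $x$, $y$, or the edge $xy$; the result can be trimmed to a genuine $A$--$B$ path.

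\emph{Small separator case.} Otherwise, by induction $G/e$ has an $A$--$B$ separator $Y$ with $|Y|<k$. This $Y$ must contain $v_e$, since otherwise $Y$ would also separate in $G$. Set $X:=(Y\setminus\{v_e\})\cup\{x,y\}$. Then $X$ separates $A$ from $B$ in $G$ and $|X|=|Y|+1\le k$, so $|X|=k$ by minimality of $k$.

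\emph{Apply induction in $G-e$.} In $G-e$, every $A$--$X$ separator also separates $A$ from $B$ in $G$. The reason is that every $A$--$B$ path in $G$ meets $X$, and its initial segment up to $X$ avoids $e$ (because $x,y\in X$). So every $A$--$X$ separator has size at least $k$. Induction on $G-e$, which has fewer edges, gives $k$ disjoint $A$--$X$ paths; symmetrically it gives $k$ disjoint $X$--$B$ paths.

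\emph{Glue.} Each vertex of $X$ is the endpoint of exactly one path from each family. The two families meet only in $X$: if an $A$--$X$ path and an $X$--$B$ path shared a vertex outside $X$, their union would contain an $A$--$B$ walk avoiding $X$, contradicting that $X$ separates. Concatenating the paths at $X$ therefore yields $k$ disjoint $A$--$B$ paths in $G$.

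The main obstacle is the bookkeeping in the contraction step. One must justify that lifted paths remain disjoint and stay genuine $A$--$B$ paths, in particular when $v_e\in A\cap B$. One must also justify that the two path families intersect only in $X$. Both points reduce to the separation property of $X$ as explained above.
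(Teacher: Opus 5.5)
Your proof is correct. It is the standard induction on the number of edges (Diestel's first proof of Menger's theorem), and each step holds:
\begin{itemize}
\item the lifting of paths from $G/e$;
\item the fact that $v_e\in Y$ forces $X=(Y\setminus\{v_e\})\cup\{x,y\}$ to be an $A$--$B$ separator of size exactly $k$;
\item the observation that the initial (resp.\ final) segment of an $A$--$B$ path up to $X$ avoids $e$, since $x,y\in X$;
\item the gluing step, which uses only that $X$ separates $A$ from $B$.
\end{itemize}

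The paper gives no proof of this statement. It quotes Menger's theorem (via \cite{DP}) as a classical black box, used in the appendix to produce disjoint path systems, so there is no argument of the paper's own to compare against.

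Two minor points you could make explicit:
\begin{itemize}
\item In the edgeless case, $k\ge|A\cap B|$ because every separator must meet each trivial path at a vertex of $A\cap B$.
\item In the gluing step, an $X$--$B$ path contains no vertex of $A$ other than possibly its first vertex; otherwise its tail would give an $A$--$B$ path avoiding $X$. Symmetrically, an $A$--$X$ path contains no vertex of $B$ other than possibly its last. Hence the concatenations are already genuine $A$--$B$ paths and need no trimming.
\end{itemize}
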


\begin{lemma}[{\cite[Lemma 5.3]{DP}}]
\label{lem:redundancy}
Let $G$ be a graph and let $A_1,A_2,B$ be disjoint non-empty subsets
of $V(G)$. If for each $i\in\{1,2\}$, there exists a set
$\mathcal P_i$ of $A_i-B$ paths in $G\setminus A_{3-i}$ with
$|\mathcal P_i|=2|A_i|$ which are pairwise disjoint in
$G\setminus A_i$ such that every vertex of $A_i$ is in exactly two
paths in $\mathcal P_i$, then there exist $|A_1|+|A_2|$
vertex-disjoint $(A_1\cup A_2)-B$ paths in $G$.
\end{lemma}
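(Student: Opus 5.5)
The plan is to argue by Menger's theorem (Theorem~\ref{lem:menger}) applied with $A:=A_1\cup A_2$ and $B$. It suffices to show that every set $X\subseteq V(G)$ separating $A_1\cup A_2$ from $B$ has $|X|\ge |A_1|+|A_2|$. The separator will be counted against the two path systems $\mathcal P_1,\mathcal P_2$.

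First I would show that every path $P\in\mathcal P_i$ meets $X$. The path $P$ is an $A_i-B$ path lying in $G\setminus A_{3-i}$. Its first vertex is therefore its only vertex in $A_1\cup A_2$, and its last vertex is its only vertex in $B$. Hence $P$ is itself an $(A_1\cup A_2)-B$ path, and so it must contain a vertex of $X$.

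Next I would bound, for each $i\in\{1,2\}$, how many paths of $\mathcal P_i$ a single vertex $x\in X$ can lie on. Write $x_i:=|X\cap A_i|$ and $z:=|X\setminus(A_1\cup A_2)|$. There are three cases.
\begin{itemize}
\item If $x\in A_i$, then $x$ lies on exactly two paths of $\mathcal P_i$, by hypothesis.
\item If $x\in A_{3-i}$, then $x$ lies on no path of $\mathcal P_i$, since these paths avoid $A_{3-i}$.
\item If $x\notin A_1\cup A_2$, then $x\in V(G)\setminus A_i$. Since the paths of $\mathcal P_i$ are pairwise disjoint in $G\setminus A_i$, the vertex $x$ lies on at most one of them.
\end{itemize}
Because all $2|A_i|$ paths of $\mathcal P_i$ are hit by $X$, this gives $2|A_i|\le 2x_i+z$ for each $i$.

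Summing the two inequalities yields
\[
2|A_1|+2|A_2|\le 2x_1+2x_2+2z=2|X|,
\]
so $|X|\ge |A_1|+|A_2|$. Menger's theorem then provides $|A_1|+|A_2|$ vertex-disjoint $(A_1\cup A_2)-B$ paths, as required.

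The only point needing care is the first step: each member of $\mathcal P_i$ must genuinely be an $(A_1\cup A_2)-B$ path, so that it cannot slip past $X$. This is exactly where the hypothesis that $\mathcal P_i$ lives in $G\setminus A_{3-i}$ is used; beyond that I expect no real obstacle.
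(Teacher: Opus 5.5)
Your proof is correct and complete. Each path of $\mathcal P_i$ is a genuine $(A_1\cup A_2)-B$ path because it avoids $A_{3-i}$, and your three-case count gives $2|A_i|\le 2|X\cap A_i|+|X\setminus(A_1\cup A_2)|$; summing over $i$ shows every separator has size at least $|A_1|+|A_2|$, and Menger's theorem finishes the argument. The paper does not prove this lemma itself but quotes it from Delcourt--Postle~\cite{DP}, so there is no in-paper proof to compare against. Your Menger-plus-double-counting argument is the natural proof, equivalent to the contradiction form in which a separator of size below $|A_1|+|A_2|$ would, by pigeonhole, miss some path of $\mathcal P_1$ or $\mathcal P_2$.
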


\begin{dpdefinition}[{\cite[Definition 5.4]{DP}}]
\label{def:linkage}
Let $\ell$ be a positive integer. Let $G$ be a graph and let
$\mathcal S=\{(s_i,t_i)\}_{i\in[\ell]}$ be a collection of pairs of
vertices of a graph $G$ such that for distinct $i,j\in[\ell]$, we
have that $s_i\ne s_j$, $t_i\ne t_j$, and further if $s_i=t_j$ for
some $i,j\in[\ell]$, then $i=j$. An $\mathcal S$-linkage
$\mathcal P$ is a collection of vertex-disjoint paths
$\{P_1,\ldots,P_\ell\}$ in $G$ such that $P_i$ has ends $s_i$ and
$t_i$ for every $i\in[\ell]$. We let $V(\mathcal P)$ denote the set
$\bigcup_{i=1}^{\ell}V(P_i)$.
\end{dpdefinition}

\begin{dpdefinition}[{\cite[Definition 5.5]{DP}}]
\label{def:linked}
A graph $G$ is said to be $\ell$-linked if $1\le\ell\le \v(G)$ and
for any collection $\mathcal S=\{(s_i,t_i)\}_{i\in[\ell]}$ of pairs
of vertices of a graph $G$ such that for distinct $i,j\in[\ell]$,
we have that $s_i\ne s_j$ and $t_i\ne t_j$, and further if $s_i=t_j$
for some $i,j\in[\ell]$, then $i=j$, there exists an
$\mathcal S$-linkage.
\end{dpdefinition}

\begin{theorem}[Bollob\'as--Thomason~\cite{BT}; {\cite[Theorem 5.6]{DP}}]
\label{lem:linkage}
There exists an integer $C=C_{\ref{lem:linkage}}\ge1$ such that the
following holds: Let $\ell$ be a positive integer. If $G$ is a graph
with $\kappa(G)\ge C\ell$, then $G$ is $\ell$-linked.
\end{theorem}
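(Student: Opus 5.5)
This is a classical result, and I would follow the Thomas--Wollan strengthening of the Bollob\'as--Thomason argument. The intermediate statement to prove is: \emph{every $2\ell$-connected graph $G$ with $\e(G)\ge 5\ell\,\v(G)$ is $\ell$-linked}. Since $\kappa(G)\ge C\ell$ forces minimum degree at least $C\ell$, and hence $\e(G)\ge \frac{C\ell}{2}\v(G)$, taking $C=10$ then gives the theorem.

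To prove the intermediate statement, I fix $\ell$ and a collection $\mathcal S=\{(s_i,t_i)\}_{i\in[\ell]}$ of terminal pairs. I then strengthen the hypothesis into one that survives contraction. Let $X$ be the set of terminals. Instead of global connectivity, I only require that no separation $(A,B)$ of order less than $|X|$ has $X\subseteq A$ and $B\setminus A\ne\emptyset$. Instead of global density, I require $\e\ge 5\ell\,\v-c$ for a suitable slack constant $c$. I then take a counterexample $(G,\mathcal S)$ that is minimal in the minor order, keeping the terminals fixed. From minimality I extract the following local structure.
\begin{itemize}
\item No edge can be deleted, so the edge count is exactly at the density threshold.
\item No edge $uv$ with at most one endpoint in $X$ can be contracted. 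Such a contraction loses only $1+|N(u)\cap N(v)|$ edges and one vertex, so it would preserve the density condition; since a linkage in $G/uv$ lifts to $G$, the contraction must therefore break the rooted-connectivity condition. Hence every such edge is ``critical'': it lies in a separation of order close to $|X|$. A complementary consequence is that every edge has at least about $5\ell-1$ common neighbours.
\item Small separations are excluded. A separation $(A,B)$ of order at most $2\ell$ with $X\subseteq A$ can be handled by replacing $G[B]$ with a clique on $A\cap B$. Menger's theorem (Theorem~\ref{lem:menger}) lets the linkage route through $B$, and the density count transfers to the smaller graph, contradicting minimality.
\end{itemize}

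The main step, and the one I expect to be hardest, is to combine the codegree lower bound with the critical separations. The codegree bound says every vertex neighbourhood has minimum degree at least about $5\ell-1$ inside it. I would try to show that some vertex $v\notin X$ has small degree (at most about $10\ell$, by averaging against the density). Then $N(v)$ is a graph on at most $10\ell$ vertices with minimum degree about $5\ell$, which makes it $\ell$-linked-like in the sense that any $2\ell$ vertices in it can be paired by short disjoint paths. Using $2\ell$-connectivity and Menger's theorem, I route $2\ell$ disjoint paths from the terminals $X$ into $N(v)$ and complete the linkage inside $G[N[v]]$. The delicate part is making the Menger paths avoid the vertices used for pairing inside $N(v)$. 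I would first try a greedy choice of the paths minimising the total length outside $N[v]$. If that fails, I would fall back on a Lemma~\ref{lem:redundancy}-style doubling of paths to gain the needed slack. I expect the constants to require adjusting throughout, and the slack constant $c$ is there precisely to absorb the small-graph base cases.
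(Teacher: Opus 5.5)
The paper does not prove this statement; it imports it from Bollob\'as--Thomason~\cite{BT} via \cite[Theorem~5.6]{DP}. Your reduction is fine: $\kappa(G)\ge 10\ell$ gives $\delta(G)\ge 10\ell$, hence at least $5\ell|V(G)|$ edges. So simply citing the Thomas--Wollan theorem would settle the statement with $C=10$. Your sketch of that theorem, however, has a genuine gap at the decisive step: a degree bound on $v$ plus the codegree bound does not make $G[N[v]]$ anything like $\ell$-linked.

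For a counterexample, take $\ell\ge 5$. Let $N[v]$ consist of a set $S\ni v$ of $\ell-1$ vertices adjacent to everything in $N[v]$, together with two cliques $Q_1,Q_2$ of orders $4\ell+3$ and $5\ell-2$ with no edges between them. Then $\deg(v)=10\ell-1$, and every edge inside $N[v]$ already has at least $5\ell$ common neighbours within $N[v]$, so all the local information you extract holds. Yet if the Menger paths from $X$ land so that each of the $\ell$ pairs has one end in $Q_1$ and one in $Q_2$, no linkage exists inside $G[N[v]]$, since each such path must use a distinct vertex of $S$.

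Minimum degree about half the order is exactly the regime where linkedness fails, and Menger's theorem gives you no control over where the $2\ell$ paths enter $N(v)$. Neither a length-minimising choice nor a Lemma~\ref{lem:redundancy}-style doubling repairs this, since the latter again only yields disjoint paths to a \emph{set}, with no prescribed pairing. A correct finish must use global structure of the minimal counterexample, for instance rerouting around small separators of $G[N[v]]$ through the rest of $G$. That is where the real content of the Bollob\'as--Thomason and Thomas--Wollan arguments lies.

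The separation step is also invalid as stated, for three reasons:
\begin{itemize}
\item $G[A]$ plus a clique on $A\cap B$ need not be a minor of $G$, so minimality does not apply to it.
\item Its density need not be large, since the surplus edges may sit in $G[B]$.
\item A linkage in it that uses several new edges cannot be pulled back by Menger's theorem. Realising prescribed pairs of $A\cap B$ by disjoint paths through $B$ is itself a linkage problem; if $B\setminus A$ is a single vertex, only one new edge can be realised.
\end{itemize}
The workable direction is to route $X$ onto $A\cap B$ inside $G[A]$ and recurse on $(G[B],A\cap B)$. This needs a counting argument that $G[B]$ inherits the density bound, plus a separate treatment of the case where the surplus lies in $G[A]$. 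You supply neither, so the ``critical'' edges are never eliminated and the codegree bound you rely on is not established. Minimality only gives, for each such edge, ``critical \emph{or} large codegree'', not both.

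Finally, the slack cannot be an arbitrary constant absorbing base cases. Forcing a non-terminal vertex of degree below $10\ell$ by averaging requires slack exceeding about $10\ell^2$, because low-degree terminals can absorb the whole deficit. With that much slack, the edgeless graph on $X$ satisfies your rooted-connectivity condition vacuously, meets the density bound, and is not linked. Edges inside $X$ therefore have to be accounted for separately.
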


\begin{dpdefinition}[{\cite[Definition 5.12]{DP}}]
\label{def:woven}
Let $a,b$ be nonnegative integers. We say a graph $G$ is
\emph{$(a,b)$-woven} if the following holds: for every three sets of vertices
$R=\{r_1,\ldots,r_a\}$, $S=\{s_1,\ldots,s_b\}$,
$T=\{t_1,\ldots,t_b\}$ in $V(G)$ such that if $s_i=t_j$ for some
$i,j\in[b]$, then $i=j$, there exists a $K_a$ model $\mathcal M$
in $G$ rooted at $R$ and an $\{(s_i,t_i)\}_{i\in[b]}$-linkage
$\mathcal P$ in $G$ such that
$V(\mathcal M)\cap V(\mathcal P)=R\cap(S\cup T)$.
\end{dpdefinition}

\begin{lemma}[Kawarabayashi~\cite{Kaw}; {\cite[Lemma 5.14]{DP}}]
\label{lem:rooted-minor}
Let $a\ge1$ be an integer. If $G$ is a graph with $\kappa(G)\ge a$
and $G$ has a $K_{2a}$ minor, then $G$ is $(a,0)$-woven.
\end{lemma}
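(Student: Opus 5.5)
The approach is to start from a $K_{2a}$ model, use the connectivity of $G$ to route the $a$ prescribed roots into $a$ distinct branch sets by disjoint paths, and absorb each path into the branch set where it ends. Since $b=0$ in Definition~\ref{def:woven}, being $(a,0)$-woven just means that for every set $R=\{r_1,\dots,r_a\}$ of $a$ vertices there is a $K_a$ model rooted at $R$.

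Fix $R$ and a $K_{2a}$ model $X_1,\dots,X_{2a}$ in $G$. It suffices to find $a$ vertex-disjoint paths $P_1,\dots,P_a$ and distinct indices $i_1,\dots,i_a$ with the following properties:
\begin{itemize}
\item $P_j$ starts at $r_j$ and ends in $X_{i_j}$;
\item $P_j$ has no other vertex in $\bigcup_i V(X_i)$.
\end{itemize}
A path of length zero is allowed when $r_j$ already lies in some branch set. Given such paths, put $Y_j:=X_{i_j}\cup P_j$. Each $Y_j$ is connected and contains exactly one vertex of $R$, namely $r_j$. The sets $Y_j$ are pairwise disjoint, and any two of them are adjacent because any two of the original branch sets are. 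Hence $\{Y_1,\dots,Y_a\}$ is a $K_a$ model rooted at $R$. The remaining $a$ branch sets are never used in the final model; their role is to make the path-finding step possible.

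To find the paths, I would form $G'$ by contracting each $X_i$ to a vertex $x_i$. A root lying inside some $X_i$ is identified with $x_i$, and at most one root may correspond to each $x_i$. I would then apply Menger's theorem (Theorem~\ref{lem:menger}) in $G'$ between the image of $R$ and $B=\{x_1,\dots,x_{2a}\}$. A family of $a$ disjoint paths in $G'$ ends at $a$ distinct vertices $x_{i_j}$, and it lifts back to paths in $G$ with exactly the properties listed above. So the whole task is to rule out a separator $S$ in $G'$ with $|S|<a$.

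Suppose such an $S$ exists. It consists of $k$ contracted vertices and at most $a-1-k$ ordinary vertices $S_0$. Then at least $2a-k\ge a+1$ branch sets avoid $S$, and their union $W$ is connected in $G$. Some root $r$ also avoids $S$, and in $G$ the root $r$ is separated from $W$ by $S_0$ together with the $k$ branch sets contained in $S$. This separator is not small in $G$, so $\kappa(G)\ge a$ cannot be applied directly, and this is the main obstacle. I would first try to exploit the fact that each of the $k$ branch sets in $S$ is itself adjacent to $W$. Working in $G-S_0$, which is still $(k+1)$-connected, the idea is to find $k+1$ disjoint paths from $r$'s side to $W$, and to show that they cannot all be intercepted by only $k$ branch sets.

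If this counting does not close, my fallback is to choose the $K_{2a}$ model, together with a maximal family of disjoint root paths into distinct branch sets, so as to minimise the total size of the model. I would then run an augmenting-path exchange in the style of Kawarabayashi's original argument. Suppose a path wants to enter a branch set that is already used. It is rerouted through one of the $a$ spare branch sets, which is adjacent to every other branch set, and either the number of roots connected increases or the model shrinks, contradicting minimality. I expect this rerouting bookkeeping to be the technical heart of the proof.
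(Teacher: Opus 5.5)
The paper does not prove this lemma. It is quoted from Kawarabayashi via Delcourt--Postle, so your argument has to stand on its own. As written it does not: the step you call the main obstacle is the entire content of the lemma.

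Your reduction is fine. Disjoint paths from the roots into $a$ distinct branch sets, meeting the model only in their last vertex, can be absorbed into a rooted $K_a$ model. But for a \emph{fixed} $K_{2a}$ model such paths need not exist, so no counting argument can exclude the small separator in $G'$. For example, let $a\ge2$, $R=\{r_1,\dots,r_a\}$, $W=\{w_1,\dots,w_a\}$, $Y=\{y_2,\dots,y_{2a}\}$. Make $R\cup W$ and $W\cup Y$ cliques, and add no $R$--$Y$ edges.
\begin{itemize}
\item Every vertex cut contains all of $W$, since a surviving $w_j$ sees every other vertex. Hence $\kappa(G)\ge a$.
\item $X_1=W$ and $X_i=\{y_i\}$ for $2\le i\le 2a$ form a $K_{2a}$ model.
\item Every path from $R$ meets this model first in $W$. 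So all root paths end in $X_1$, and $\{x_1\}$ is a genuine separator of size $1<a$ in $G'$.
\end{itemize}
In particular your pigeonhole idea fails. There are $a$ disjoint paths from the root side to $\bigcup_{i\ge2}X_i$ (namely $r_jw_jy_{j+1}$), and the single branch set $X_1$ intercepts all of them. Similarly, with the model $X_1=R\cup W$, $X_i=\{y_i\}$, all roots lie in one branch set. So ``at most one root per $x_i$'' is not something you may assume for a given model. The lemma does hold in this graph, e.g.\ via the rooted model $\{r_j,w_j,y_{j+1}\}$, $j\in[a]$, but only after $W$ has been broken up.

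Hence the model itself must be changed, and your fallback does not say how. Suppose a maximum family of root paths into distinct branch sets has fewer than $a$ members. Menger's theorem in $G'$ then gives a separator. By $\kappa(G)\ge a$, this separator must contain contracted branch sets, each already used as the endpoint of some path in the family. Any progress therefore requires routing through, i.e.\ re-partitioning, such a used branch set $X$ into connected pieces:
\begin{itemize}
\item one piece keeps the path that already ends in $X$;
\item another piece receives the new route;
\item each piece must again be adjacent to all other final branch sets, e.g.\ by merging it with a spare branch set.
\end{itemize}
Nothing in your setup guarantees such a split. For instance, the spare branch sets may all attach to $X$ only on the side of the path end already present. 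You also give no precise exchange rule, no potential that provably improves, and no argument for how connectivity repairs a failed split. The sentence ``either the number of roots connected increases or the model shrinks'' is exactly the statement that needs proof. It is where the surplus of $a$ spare branch sets in the $K_{2a}$ model must be combined with $\kappa(G)\ge a$. Without this step the proposal is a plan, not a proof.
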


\begin{lemma}[{\cite[Lemma 5.15]{DP}}]
\label{lem:weaving}
Let $a,b$ be positive integers. Let $G$ be a graph. Let
$S=\{s_1,\ldots,s_b\}$, $T=\{t_1,\ldots,t_b\}$ be disjoint sets
of vertices in $G$. Let $\mathcal P$ be an
$\{(s_i,t_i)\}_{i\in[b]}$-linkage in $G$. If $H$ is a subgraph of
$G$ with $\v(H)\ge a+2b$ that is $(a,b)$-woven and $R=\{r_1,\ldots,r_a\}\subseteq V(H)$,
then there exists an $\{(s_i,t_i)\}_{i\in[b]}$-linkage
$\mathcal P'$ in $G$ and a $K_a$ model $\mathcal M$ in $G$ rooted
at $R$ such that $V(\mathcal P')\cap V(\mathcal M)\subseteq
R\cap V(\mathcal P)$ and $V(\mathcal P')\subseteq
V(H)\cup V(\mathcal P)$.
\end{lemma}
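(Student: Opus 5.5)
The plan is a rerouting argument: keep each path of $\mathcal P$ outside $H$, and replace the part of it that passes through $H$ by a path supplied by the wovenness of $H$.

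First, the entry and exit points. Let $I\subseteq[b]$ be the set of indices $i$ for which the path $P_i\in\mathcal P$ meets $V(H)$. For $i\in I$, traverse $P_i$ from $s_i$ to $t_i$; let $s_i'$ be the first vertex of $P_i$ in $V(H)$ and $t_i'$ the last one. We may have $s_i'=t_i'$. Then:
\begin{itemize}
\item the initial segment $P_i[s_i,s_i']$ and the final segment $P_i[t_i',t_i]$ meet $V(H)$ only in $s_i'$ and $t_i'$, respectively;
\item since the $P_i$ are pairwise vertex-disjoint, the vertices $s_i',t_i'$ for different $i$ are distinct;
\item consequently $s_i'=t_j'$ forces $i=j$, so these pairs are admissible in Definition~\ref{def:woven}.
\end{itemize}

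Second, the padding. Definition~\ref{def:woven} needs exactly $b$ pairs, so we pad with $b-|I|$ dummy pairs $(s_j',t_j')$ with $s_j'=t_j'$, one for each $j\notin I$. These vertices are chosen distinct, in $V(H)$, and outside $R\cup\{s_i',t_i':i\in I\}$. This is possible because that excluded set has at most $a+2|I|$ vertices, while we need $b-|I|$ further vertices, and $a+2|I|+(b-|I|)\le a+2b\le\v(H)$. This is exactly where the hypothesis $\v(H)\ge a+2b$ is used.

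Third, we apply $(a,b)$-wovenness of $H$ to $R$, $S'=\{s'_1,\dots,s'_b\}$ and $T'=\{t'_1,\dots,t'_b\}$. This gives, inside $H$:
\begin{itemize}
\item a $K_a$ model $\mathcal M$ rooted at $R$;
\item an $\{(s_i',t_i')\}$-linkage $\mathcal Q=\{Q_1,\dots,Q_b\}$;
\item the intersection property $V(\mathcal M)\cap V(\mathcal Q)=R\cap(S'\cup T')$.
\end{itemize}
Every subgraph of $H$ is a subgraph of $G$, so $\mathcal M$ is also a $K_a$ model in $G$ rooted at $R$.

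Fourth, we define the new linkage. For $i\in I$ set $P_i'=P_i[s_i,s_i']\,Q_i\,P_i[t_i',t_i]$, and for $i\notin I$ set $P_i'=P_i$; the dummy paths $Q_j$ are discarded. We then verify the required properties.
\begin{itemize}
\item Each $P_i'$ is a path from $s_i$ to $t_i$. Its outer segments avoid $V(H)$ except at their ends, and $Q_i\subseteq H$.
\item The $P_i'$ are pairwise disjoint. Outer segments of different paths are disjoint because the $P_i$ are. Outer segments avoid every $Q_j$ with $j\neq i$, since those lie in $V(H)$ away from $s_i',t_i'$. The $Q_i$ are pairwise disjoint.
\item $V(\mathcal P')\subseteq V(H)\cup V(\mathcal P)$, since every vertex lies either on some $P_i$ or in $H$.
\item $V(\mathcal M)\subseteq V(H)$, so $\mathcal M$ can only meet $\mathcal P'$ inside $V(H)$. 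There, $\mathcal P'$ consists of the $Q_i$ with $i\in I$ (together with the endpoints $s_i',t_i'$ of the outer segments, which lie on $Q_i$). Hence $V(\mathcal P')\cap V(\mathcal M)\subseteq R\cap\{s_i',t_i':i\in I\}\subseteq R\cap V(\mathcal P)$.
\item The dummy vertices were chosen outside $R$, so they never lie in $R\cap(S'\cup T')$. Thus discarding the dummy paths loses nothing, and they never lie on $\mathcal P'$.
\end{itemize}

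I expect the only delicate step to be the second one: arranging that there are exactly $b$ admissible pairs while keeping $R$ free of padding vertices. The counting above settles it.
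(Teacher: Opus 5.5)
Your proposal is correct, and it is essentially the standard argument behind \cite[Lemma~5.15]{DP}, which the paper quotes without reproving. That argument takes the first and last vertices of each $P_i$ in $V(H)$ as new terminals, pads with trivial dummy pairs $s_j'=t_j'$ (this is where $\v(H)\ge a+2b$ enters), applies $(a,b)$-wovenness of $H$, and splices the resulting linkage into the outer segments of the $P_i$; the verifications you give all hold as written, including admissibility of the new pairs, disjointness of the spliced paths, and $V(\mathcal P')\cap V(\mathcal M)\subseteq R\cap\{s_i',t_i':i\in I\}\subseteq R\cap V(\mathcal P)$.
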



\subsection{An adaptation of {\cite[Theorem 7.1]{DP}}}

The proof of Lemma~\ref{lem:DP-woven} follows the proof of
\cite[Theorem 7.1]{DP}; the proof of
Theorem~\ref{thm:DPinsep} follows their derivation of Theorem~1.6. To emphasize the similarity of the proofs, we indicate the changes from the original proofs in 
\DPnew{dark green}. The remaining parts of the proofs of Lemma~\ref{lem:DP-woven} and Theorem~\ref{thm:DPinsep},  except for the differences in indexing and one misprint correction, correspond verbatim to passages in the original proofs in \cite{DP}.

\begin{lemma}\label{lem:DP-woven}
There exists an integer $C=C_{\ref{lem:DP-woven}}\ge1$ such that the following
holds: Let $t\ge3$ be an integer. Let $G$ be a graph and let
$f_{7.1}(G,t)=f(G,14t)$, where $f$ is defined in Section~\ref{sec:bootstrap}.
Suppose that $t$ is a power of three and
$a=\left(\frac23\right)^i\cdot t$ is a positive integer for some
nonnegative integer $i$. If $G$ is a $Ca$-connected graph with
\[
 \chi(G)\ge C(t+276\cdot a\cdot(1+f_{7.1}(G,t))),
\]
then $G$ is $(a,3a)$-woven.
\end{lemma}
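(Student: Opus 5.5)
We argue by induction on $i$, i.e.\ downward in $a$, in the same manner as the proof of \cite[Theorem~7.1]{DP}. The main recursive step is a trisection of the chromatic number. The only place where inseparability enters is the point where \cite{DP} uses the size bound $\v(H)\le C_{\rm DP}a(\log a)^4$. There we will use instead that an $s$-chromatic-inseparable induced subgraph $H$ with $\had(H)<a'$ must have $s\le f_{7.1}(G,t)\,a'$. We take $C$ large compared with $C_{\ref{lem:small-connected}}$, $C_{\ref{lem:linkage}}$ and the constant $30$ of Lemma~\ref{lem:density}.

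\emph{Base case and small $a$.} When $a$ is bounded by an absolute constant, or more generally whenever $C a$-connectivity already forces $(a,3a)$-wovenness, we argue directly. By Theorem~\ref{lem:linkage}, $G$ is $4a$-linked. By Lemma~\ref{lem:density}, $G$ has a $K_{2a}$ minor as soon as its minimum degree exceeds $60a\sqrt{\log a}$. Lemma~\ref{lem:rooted-minor} then gives a $K_a$ model rooted at $R$, and we reroute the linkage around it as in \cite{DP}.

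\emph{Inductive step: finding three disjoint pieces.} Fix $R,S,T$ with $|R|=a$ and $|S|=|T|=3a$. Let $a'=\frac23 a$. Choose the threshold $s:=276\,a'(1+f_{7.1}(G,t))$ in the same way as \cite{DP}.
\begin{itemize}
\item[(i)] We first argue that $G$, and then suitable subgraphs of $G$, are $s$-chromatic-separable. Suppose some relevant induced subgraph $H$ were $s$-chromatic-inseparable with $\had(H)<14t$. Then by definition $s\le 14t\,f(G,14t)=14t\,f_{7.1}(G,t)$, which contradicts the choice of constants. On the other hand, if $\had(H)\ge 14t$, we obtain a large clique minor that directly serves as the woven structure, as in \cite{DP}.
\item[(ii)] Iterating separability twice yields three vertex-disjoint subgraphs $H_1,H_2,H_3$, each with $\chi(H_j)\ge\chi(G)-2s-O(Ca)$.
\item[(iii)] Apply Theorem~\ref{lem:connectivity} inside each $H_j$ to obtain $Ca'$-connected subgraphs $G_j$. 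These still satisfy the chromatic hypothesis with $a'$ in place of $a$. This requires checking that $f_{7.1}(G_j,t)\le f_{7.1}(G,t)$, which holds because the supremum defining $f$ runs over induced subgraphs and is therefore monotone under taking induced subgraphs; we pass to induced subgraphs wherever needed.
\end{itemize}
By induction each $G_j$ is $(a',3a')$-woven.

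\emph{Gluing.} Using the $Ca$-connectivity of $G$, Menger (Theorem~\ref{lem:menger}) and Lemma~\ref{lem:redundancy} give disjoint path systems linking $R$ to the three pieces and linking the pieces to one another. We split $R$ into thirds and root a $K_{a'}$ model in each $G_j$ at the landing vertices, where $a'=\frac23 a$ gives the right count for the cross-links. Lemma~\ref{lem:weaving} then lets us route the $S$--$T$ linkage through the woven pieces while keeping it disjoint from the models. Contracting the connecting paths merges the three $K_{a'}$ models into a $K_a$ model rooted at $R$ that is disjoint from the linkage.

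\emph{Main obstacle.} The hard part is the bookkeeping in (i)--(iii): verifying that the constant $276$ and the replacement of the size condition by the inseparability supremum are consistent with every step of \cite{DP}, and in particular that no step there used the vertex bound $\v(H)\le C a\log^4 a$ for anything other than inseparability. We would go through the proof of \cite[Theorem~7.1]{DP} line by line and mark each such change.
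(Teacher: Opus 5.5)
Your step (i) does not go through, and this is exactly the point where the lemma needs a new idea. If you only know $h(H)<14t$, the definition of $f_{7.1}(G,t)=f(G,14t)$ gives $s\le 14t\,f_{7.1}(G,t)$. Your threshold is $s=276a'(1+f_{7.1}(G,t))=184a(1+f_{7.1}(G,t))$. These two are perfectly compatible, e.g.\ when $f_{7.1}(G,t)\ge1$ and $a\le t/27$, and the induction must pass through such $a$.

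You also cannot raise the threshold to order $t(1+f_{7.1}(G,t))$. Passing from $a$ to $a'=2a/3$, the hypothesis only budgets a drop of $92Ca(1+f_{7.1}(G,t))$ in chromatic number. So each separation must cost $O(a(1+f_{7.1}(G,t)))$, which means the Hadwiger bound you feed into $f$ must be proportional to $a$, not to $t$. The variant you state at the outset ($h(H)<a'$ forces $s\le f_{7.1}(G,t)\,a'$) is not available either. The supremum defining $f(G,14t)$ only ranges over minor bounds in the window $(14t/\sqrt{\log(14t)},14t]$, and $a'$ need not lie there.

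The missing idea is a dichotomy at each level $a$:
\begin{itemize}
\item If $G$ has a $K_{14a}$ minor, then $\kappa(G)\ge 7a$ and Lemma~\ref{lem:rooted-minor} make $G$ $(7a,0)$-woven, hence $(a,3a)$-woven.
\item Otherwise, Theorem~\ref{lem:density} gives $\chi(G)\le 840a\sqrt{\log(14a)}+1<2000t\le Ct$ whenever $a\le t/\sqrt{\log t}$, contradicting the hypothesis. This is the only use of the summand $Ct$, which your proposal never uses, and it replaces your base case, which only handles bounded $a$.
\end{itemize}
Hence $a>t/\sqrt{\log t}$, so $14a$ lies in the admissible window. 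An induced $K_{14a}$-minor-free subgraph that is $m$-chromatic-inseparable with $m=14a(1+f_{7.1}(G,t))$ would then give $1+f_{7.1}(G,t)\le f_{7.1}(G,t)$. This is what makes the two successive separations legitimate.

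Your gluing, as described, also does not fit the counts. The three pieces are only $(a',3a')$-woven, so Lemma~\ref{lem:weaving} lets each of them accommodate at most $3a'=2a$ rerouted linkage paths. You want to push through them both the $3a$ paths of the $S$--$T$ linkage and the connections from $R$.

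The paper separates these two tasks:
\begin{itemize}
\item It takes a small highly connected $H_0$ in $G\setminus(R\cup S\cup T)$ via Theorem~\ref{lem:small-connected}. Since $f$ no longer bounds the chromatic number of small subgraphs, it trims $H_0$ with Theorem~\ref{lem:connectivity} so that $\chi(H_0)\le 98C_{\ref{lem:linkage}}a$ and deleting it is affordable. This is precisely a place where the original argument used the size bound for something other than inseparability, as you worried.
\item It routes $R\cup S\cup T$, and a highly connected $G_3$ of the remainder, into $H_0$ via Theorem~\ref{lem:menger} and Lemma~\ref{lem:redundancy}. All required connections, including the $S$--$T$ linkage, are then realised inside $H_0$ by Theorem~\ref{lem:linkage}.
\item This reduces everything to showing that $G_3$ is $(a,0)$-woven, and only there is the trisection used. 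Each root $r_i'$ gets two neighbours $s_i',s_{a+i}'$. These are linked to the $3a'=2a$ roots of the three $K_{a'}$ models by a $2a$-path linkage, which is exactly what Lemma~\ref{lem:weaving} can reroute. The pairing is chosen so that any two pairs meet a common model.
\end{itemize}
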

\begin{proof}\begin{DPaddition}

Let $C_{\ref{lem:DP-woven}}:=35\cdot C_{\ref{lem:small-connected}}^{2}\cdot C_{\ref{lem:linkage}}$.
Choose $C_{\ref{lem:small-connected}}\ge480$. Then
$C_{\ref{lem:DP-woven}}\ge2000$ and $C_{\ref{lem:DP-woven}}\ge98C_{\ref{lem:linkage}}$.
All subgraphs supplied by connectivity extraction or chromatic
separation below are assumed to be  induced, without loss of generality.
\end{DPaddition}

We proceed by induction on $a$. Suppose that $G$ contains a $K_{14a}$ minor. Since
$C_{\ref{lem:DP-woven}}\ge7$, we have that $G$ is $7a$-connected. Then by
Lemma~\ref{lem:rooted-minor}, we have that $G$ is $(7a,0)$-woven. Hence $G$ is
$(a,3a)$-woven as desired. 
So we may assume that $G$ is $K_{14a}$-minor-free. 

\begin{DPaddition} Suppose that $a  \leq \frac{t}{\sqrt{\log t}}.$
Theorem~\ref{lem:density} implies that a $K_r$-minor-free graph has chromatic
number at most $60r\sqrt{\log r}+1$. Consequently, as $G$ has no $K_{14a}$ minor,
then
\[
 \chi(G)\le840a\sqrt{\log(14a)}+1
 \le840t\sqrt{1+\frac{\log14}{\log3}}+1
 <2000t\le C_{\ref{lem:DP-woven}}t,
\]
a contradiction. Thus {$a >\frac{t}{\sqrt{\log t}}$}.
\end{DPaddition}

Now we proceed to show that $G$ is $(a,3a)$-woven directly.
Let $R=\{r_1,\ldots,r_a\}$, $S=\{s_1,\ldots,s_{3a}\}$,
$T=\{t_1,\ldots,t_{3a}\}$ be subsets of $V(G)$ such that if
$s_i=t_j$ for some $i,j\in[3a]$, then $i=j$. Since
$\delta(G)\ge\kappa(G)\ge8a=2|R|+|S|+|T|$ as $C_{\ref{lem:DP-woven}}\ge8$,
we may assume without loss of generality that $R$, $S$ and $T$
are pairwise disjoint.

For all $i\in[3a]$, let $z_i:=s_i$. For all $i\in[a]$, let
$z_{3a+i}:=r_i$. For all $i\in[3a]$, let $z_{4a+i}:=t_i$.
Let $Z:=\{z_i:i\in[7a]\}$. Let $G_1:=G\setminus Z$. Note that
\[
 \chi(G_1)\ge\chi(G)-|Z|\ge\chi(G)-7a
 \ge C_{\ref{lem:DP-woven}}\cdot(t+269\cdot a\cdot(1+f_{7.1}(G,t))).
\]
Let $k:=14\cdot C_{\ref{lem:linkage}}\cdot a$. Since $C_{\ref{lem:linkage}}\ge1$, we have
that $k\ge14a$.
Note that $d(G_1)\ge\frac{\delta(G_1)}2\ge\frac{\delta(G)-|Z|}2$.
Hence we have that
\[
 d(G_1)\ge\frac{\kappa(G)-7a}{2}\ge C_{\ref{lem:small-connected}}\cdot k,
\]
where we used that $C_{\ref{lem:DP-woven}}\ge28\cdot C_{\ref{lem:linkage}}\cdot C_{\ref{lem:small-connected}}+7$.
By Theorem~\ref{lem:small-connected} since $G_1$ is $K_{14a}$-minor-free, there exists a
$(14\cdot C_{\ref{lem:linkage}}\cdot a)$-connected induced subgraph $H_0$ of $G_1$. 

\begin{DPaddition}
If $\chi(H_0)>7k$, delete vertices from $H_0$ until the resulting graph has chromatic number exactly $7k$, and apply
Theorem~\ref{lem:connectivity} to replace $H_0$ by an induced $k$-connected subgraph
of it. Thus we assume
\[
 \kappa(H_0)\ge14C_{\ref{lem:linkage}}a,\qquad
 \chi(H_0)\le7k=98C_{\ref{lem:linkage}}a.
\]
\end{DPaddition}

Since $G$ is $14a$-connected as $C_{\ref{lem:DP-woven}}\ge14$, it follows from
Theorem~\ref{lem:menger} that there exists a set $\mathcal P_1$ of
$Z-V(H_0)$ paths with $|\mathcal P_1|=2|Z|$ which are vertex-disjoint
except in $Z$ and where each vertex in $Z$ is the end of exactly two
paths in $\mathcal P_1$.
We may assume without loss of generality that each path in
$\mathcal P_1$ is induced. Hence
\[
 \chi\left(\bigcup_{P\in\mathcal P_1}V(P)\right)
 \le2\cdot|\mathcal P_1|\le28a.
\]
Let $G_2:=G\setminus\left(Z\cup\bigcup_{P\in\mathcal P_1}V(P)
 \cup V(H_0)\right)$. Thus
\begin{align*}
 \chi(G_2)&\ge\chi(G_1)-28a-\,\DPnew{98C_{\ref{lem:linkage}}a}\\
 &\ge C_{\ref{lem:DP-woven}}\cdot(t+227\cdot a\cdot(1+f_{7.1}(G,t))).
\end{align*}

By Theorem~\ref{lem:connectivity} applied to $G_2$, there exists a
$(C_{\ref{lem:DP-woven}}\cdot a)$-connected subgraph $G_3$ of $G_2$ with
\[
 \chi(G_3)\ge\chi(G_2)-6\cdot C_{\ref{lem:DP-woven}}\cdot a
 \ge C_{\ref{lem:DP-woven}}\cdot(t+221\cdot a\cdot(1+f_{7.1}(G,t))).
\]
First suppose that $G_3$ is $(a,0)$-woven. Since $G_1$ is
$2a$-connected, it follows from Theorem~\ref{lem:menger} that there exists
a set $\mathcal P_2$ of pairwise vertex-disjoint $V(G_3)-V(H_0)$
paths in $G_1$ with $|\mathcal P_2|=2a$.

Given the existence of $\mathcal P_1$ and $\mathcal P_2$, it
follows from  Lemma~\ref{lem:redundancy} that there exists a set
$\mathcal P$ of $8a$ pairwise vertex-disjoint
$(Z\cup V(G_3))-V(H_0)$ paths where every vertex in $Z$ is an end
of some path in $\mathcal P$.

For $i\in[7a]$, let $P_i$ be the path in $\mathcal P$ such that
$z_i$ is an end of $P_i$. Let
$\mathcal P\setminus\{P_i:i\in[7a]\}=\{P_{7a+1},\ldots,P_{8a}\}$.
For each $i\in\{7a+1,\ldots,8a\}$, let $v_i$ be the end of $P_i$
in $V(G_3)$. For each $i\in[8a]$, let $u_i\in V(H_0)$ be the other
end of $P_i$. By definition of $(a,0)$-woven, there exists a $K_a$
model $\mathcal M'$ in $G_3$ rooted on $\{v_{7a+1},\ldots,v_{8a}\}$.

For each $i\in[a]$, let $M_i'$ denote the subgraph in
$\mathcal M'$ containing $v_{7a+i}$.

Since $H_0$ is $(C_{\ref{lem:linkage}}\cdot4a)$-connected, we have by Theorem~\ref{lem:linkage}
that there exists a $\{(u_i,u_{4a+i})\}_{i\in[4a]}$-linkage
$\mathcal Q$ in $H_0$. For each $i\in[a]$, let
\[
 M_i:=P_{3a+i}\cup Q_{3a+i}\cup P_{7a+i}\cup M_i'.
\]
Then $\bigcup_{i\in[a]}M_i$ is $K_a$ model $\mathcal M$ rooted at
$R$. For each $i\in[3a]$, let
\[
 P_i':=P_i\cup Q_i\cup P_{4a+i}.
\]
Now $\bigcup_{i\in[3a]}P_i'$ is an $S-T$ linkage disjoint from
$\mathcal M$. Since $R,S,T$ were arbitrary, we find that $G$ is
$(a,3a)$-woven as desired.
Thus it suffices to show that $G_3$ is $(a,0)$-woven. To that end,
let $R'=\{r_1',\ldots,r_a'\}$ be a set of $a$ distinct vertices in $G_3$.
Since $G_3$ is $(C_{\ref{lem:DP-woven}}\cdot a)$-connected, we have that
$\delta(G_3)\ge C_{\ref{lem:DP-woven}}\cdot a\ge3a=3|R'|$ since $C_{\ref{lem:DP-woven}}\ge3$.
Thus there exists a subset $S'=\{s_1',\ldots,s_{2a}'\}$ of
$V(G_3)\setminus R'$ with $|S'|=2a$, where $s_i'$ and $s_{a+i}'$
are adjacent to $r_i'$ for every $i\in[a]$.
Let $G_4:=G_3\setminus(R'\cup S')$. So
\[
 \chi(G_4)\ge\chi(G_3)-3a
 \ge C_{\ref{lem:DP-woven}}\cdot(t+218\cdot a\cdot(1+f_{7.1}(G,t))).
\]

Let \DPnew{$m:=14a
(1+f_{7.1}(G,t))$.}
It follows that $\chi(G_4)\ge2m$.
\begin{DPaddition}
We have
\[
 \frac{14t}{\sqrt{\log(14t)}}
 <\frac{14t}{\sqrt{\log t}}<14a\le14t.
\]
\end{DPaddition}
Then since
\DPnew{$G_4$ is induced in $G$}
and $G_4$ is $K_{14a}$-minor-free, we have by
\DPnew{the definition of $f_{7.1}(G,t)$}
that $G_4$ is $m$-chromatic-separable.

Thus by definition of $m$-chromatic-separable, there exist two
vertex-disjoint subgraphs $H_1,H_2$ of $G_4$ such that for each
$i\in\{1,2\}$, we have
\[
 \chi(H_i)\ge\chi(G_4)-m
 \ge C_{\ref{lem:DP-woven}}\cdot(t+204\cdot a\cdot(1+f_{7.1}(G,t))).
\]

Similarly since $\chi(H_1)\ge2m$
and $H_1$ is $K_{14a}$-minor-free, it also follows from
\DPnew{the definition of $f_{7.1}(G,t)$}
that $H_1$ is $m$-chromatic-separable.

Hence by definition of
$m$-chromatic-separable, there exist two vertex-disjoint subgraphs
$H_{1,1},H_{1,2}$ of $H_1$ such that for each $j\in\{1,2\}$, we have
\[
 \chi(H_{1,j})\ge\chi(H_1)-m
 \ge C_{\ref{lem:DP-woven}}\cdot(t+190\cdot a\cdot(1+f_{7.1}(G,t))).
\]
Let $J_1:=H_{1,1}$, $J_2:=H_{1,2}$ and $J_3:=H_2$. Note that for
each $i\in[3]$, $\chi(J_i)\ge7\cdot C_{\ref{lem:DP-woven}}\cdot a$. Hence for
each $i\in[3]$, by Theorem~\ref{lem:connectivity} as $\chi(J_i)\ge7\cdot C_{\ref{lem:DP-woven}}\cdot a$,
there exists a $(C_{\ref{lem:DP-woven}}\cdot a)$-connected subgraph $J_i'$ of $J_i$ with
\[
 \chi(J_i')\ge\chi(J_i)-6\cdot C_{\ref{lem:DP-woven}}\cdot a\ge
 \DPnew{C_{\ref{lem:DP-woven}}\cdot}
 (t+184\cdot a\cdot(1+f_{7.1}(G,t))).
\]

Let $a':=2a/3$. Thus $276\cdot a'=184\cdot a$.

Hence for each $i\in[3]$, we have that
\[
 \chi(J_i')\ge C_{\ref{lem:DP-woven}}\cdot(t+276\cdot a'\cdot(1+f_{7.1}(G,t))).
\]

Thus by induction on $a$, we find that $J_i'$ is $(a',3a')$-woven
for each $i\in[3]$.
For each $i\in[3]$, since $\v(J_i')\ge t$, there exists a subset
$T_i=\{t'_{(i-1)a'+1},\ldots,t'_{ia'}\}$ of $V(J_i')$.
Let $T':=\{t_i':i\in[2a]\}$. Note that $|S'|=|T'|=2a$.
Since $G_3\setminus R'$ is $(C_{\ref{lem:linkage}}\cdot(2a))$-connected since
$C_{\ref{lem:DP-woven}}\ge2C_{\ref{lem:linkage}}+1$, we have by Theorem~\ref{lem:linkage} that there exists
an $\{(s_i',t_i')\}_{i\in[2a]}$-linkage $\mathcal P'$ in
$G_3\setminus R'$.

\begin{dpclaim}\label{clm:DP-weave}
There exists an $\{(s_i',t_i')\}_{i\in[2a]}$-linkage
$\mathcal P''$ in $G_3\setminus R'$ and $K_{a'}$ models $\mathcal M_i$
in $J_i'$ rooted at $T_i$ for $i\in[3]$ such that
$V(\mathcal P'')\cap V(\mathcal M_i)=T_i$.
\end{dpclaim}
\begin{proof}
Since each $J_i'$ is $(a',3a')$-woven, the claim follows by
iteratively applying Lemma~\ref{lem:weaving} to each $J_i'$.
\end{proof}

By Claim~\ref{clm:DP-weave}, there exist $\mathcal P''$ and $K_{a'}$ models
$\mathcal M_i$ as in the claim. For each $i\in[2a]$, let $P_i''$
be the path in $\mathcal P''$ containing $\{s_i',t_i'\}$.
For each $i\in[3]$ and $j\in[a']$, let $M'_{(i-1)a'+j}$ denote
the subgraph in $\mathcal M_i$ containing $t'_{(i-1)a'+j}$.
For each $i\in[a]$, let
\[
 M_i:=M_i'\cup P_i''\cup\{r_i's_i',r_i's_{a+i}'\}
       \cup P_{a+i}''\cup M_{a+i}'.
\]

Now $\mathcal M:=\{M_i:i\in[a]\}$ is a $K_a$ model in $G_3$
rooted at $R'$. Thus $G_3$ is $(a,0)$-woven as desired.
\end{proof}

\subsection{Deriving Theorem~\ref{thm:DPinsep} from Lemma~\ref{lem:DP-woven}}


Finally, we prove Theorem~\ref{thm:DPinsep}. Its derivation from Lemma~\ref{lem:DP-woven} is a fairly short and straightforward bookkeeping paralleling the derivation of~\cite[Theorem 1.6]{DP} from~\cite[Theorem 7.1]{DP}. We continue to indicate the changes in  \DPnew{dark green}.

\begin{proof}[Proof of Theorem~\ref{thm:DPinsep}]
Let $C_{\mathrm{sep}}:=3^9\cdot C_{\ref{lem:DP-woven}}$.

Suppose not. Let $t':=3^{\lceil\log_3t\rceil}$, that is the smallest power of
three that is at least as large as $t$. Thus $t'\le3t$.

By Theorem~\ref{lem:connectivity} with $k=C_{\ref{lem:DP-woven}}\cdot t'$
as $\chi(G)\ge21\cdot C_{\ref{lem:DP-woven}}\cdot t$, there exists a
$(C_{\ref{lem:DP-woven}}\cdot t')$-connected \DPnew{induced }subgraph
$G'$ of $G$ with
\[
 \chi(G')\ge\chi(G)-18\cdot C_{\ref{lem:DP-woven}}\cdot t
 \ge\DPnew{831}\cdot C_{\ref{lem:DP-woven}}\cdot t
       \cdot(1+f(G,t)),
\]
since $f(G,t)\ge0$ and
$3^9\ge\DPnew{849}$. Since $t'\le3t$, we find that
\[
 \chi(G')\ge C_{\ref{lem:DP-woven}}\cdot t'
 \cdot(1+276\cdot\DPnew{(1+}f(G,t)\DPnew{)}).
\]

We claim that $f(G,t)\ge f_{7.1}(G',t')$.
To see this, let $H\subseteq G'$ and \DPnew{real numbers $a$
and $s\ge0$ be such that 
$14t'\ge a >\frac{14t'}{\sqrt{\log(14t')}}, h(H) < a$,  and
$H$ is induced in $G'$ and
$s$-chromatic-inseparable.}
First suppose $a\le t$. Then since
$\DPnew{a>\frac{14t'}{\sqrt{\log(14t')}}\ge
             \frac{t}{\sqrt{\log t}}}$, we find that
$\DPnew{\frac sa}\le f(G,t)$
as desired. So we assume that $a>t$. Yet $a\le14t'\le42t$.
Since $H$ is $K_t$-minor-free as $G$ is, it follows that
$\DPnew{\frac sa\le\frac st}
 \le f(G,t)$ as desired.
This \DPnew{holds for every admissible $(H,a,s)$, so taking the supremum }
proves the claim.

Thus
\[
 \chi(G')\ge C_{\ref{lem:DP-woven}}\cdot t'
 \cdot(1+276\cdot\DPnew{(1+}f_{7.1}(G',t')\DPnew{)}).
\]
It now follows from Lemma~\ref{lem:DP-woven} with $a=t'$ that
$G'$ is $(t',3t')$-woven and hence contains a $K_{t'}$ minor,
contradicting that $G$ has no $K_t$ minor.
\end{proof}

\endgroup

\end{document}